\numberwithin{equation}{section}
\newtheorem{theorem}{Theorem}[section]
\newtheorem{lemma}[theorem]{Lemma}
\newtheorem{proposition}[theorem]{Proposition}
\newtheorem{definition-theorem}[theorem]{Definition-Theorem}
\newtheorem{definition-proposition}[theorem]{Definition-Proposition}
\newtheorem{problem}[theorem]{Problem}
\theoremstyle{definition}
\newtheorem{definition}[theorem]{Definition}
\newtheorem{example}[theorem]{Example}
\newtheorem{convention}[theorem]{Convention}
\newcommand{\HH}{\mathsf{H}}
\newcommand{\m}{\mathfrak{m}}
\newcommand{\p}{\mathrm{p}}
\newcommand{\n}{\mathrm{n}}
\newcommand{\Hom}{\operatorname{Hom}\nolimits}
\newcommand{\End}{\operatorname{End}\nolimits}
\newcommand{\RHom}{\mathbf{R}\strut\kern-.2em\operatorname{Hom}\nolimits}
\DeclareMathOperator{\moduleCategory}{\mathsf{mod}} \renewcommand{\mod}{\moduleCategory}
\DeclareMathOperator{\proj}{\mathsf{proj}}
\DeclareMathOperator{\thick}{\mathsf{thick}}
\DeclareMathOperator{\add}{\mathsf{add}}
\newcommand{\Db}{\mathsf{D}^{\rm b}}
\newcommand{\Kb}{\mathsf{K}^{\rm b}}
\newcommand{\val}{{\rm val}}
\newcommand{\occ}{{\rm occ}}
\begin{document}

\title{Mutation of Brauer configuration algebras}
%\date{\today}

\author[Toshitaka Aoki]{Toshitaka Aoki}
\address{Toshitaka Aoki,
Graduate School of Human Development and Environment, Kobe University, 3-11 Tsurukabuto, Nada-ku, Kobe 657-8501, Japan}
\email{toshitaka.aoki@people.kobe-u.ac.jp}

\author[Yingying Zhang]{Yingying Zhang*}
\address{Yingying Zhang, 
Department of Mathematics, Huzhou University, Huzhou 313000, Zhejiang Province, P.R.China}
\email{yyzhang@zjhu.edu.cn}
\thanks{MSC2020: 16G20}
\thanks{Keywords: Tilting mutation, Brauer configuration algebra, Brauer graph algebra, Special multiserial algebra}
\thanks{*Corresponding author}

\begin{abstract}
For Brauer graph algebras, tilting mutation is compatible with flip of Brauer graphs. 
The aim of this paper is to generalize this result to the class of Brauer configuration algebras introduced by Green and Schroll recently. 
More precisely, under a certain condition, we introduce flip of Brauer configurations and prove that it is compatible with tilting mutation of the corresponding Brauer configuration algebras. 
\end{abstract}

\maketitle
%\tableofcontents`

\section{Introduction}
%%%%%%%Tilting theory%%%%%%%%%%%%%%%%%%%%%%%%%% 
Tilting theory is a powerful tool to analyze the structure of derived categories and a rich source of derived equivalences. A central notion is played by tilting objects, which can be seen as a generalization of progenerators in Morita theory \cite{Rickard89Morita}. 
The procedure called tilting mutation produces a new tilting object from the original one by replacing a chosen summand if possible, and it brings a combinatorial framework in the study \cite{BGP73,RS91,Okuyama98,HU05}. 
To complement this operation, silting mutation on the class of silting complexes was introduced by \cite{AI12}, where any summand of a silting complex can be always replaced to obtain a new silting complex. 
Silting mutation has close relation to various important notions in representation theory \cite{FZ02,IY08,BY13,AIR14,Zhang17,Cao21}. 
In the case that a given algebra is symmetric, 
all silting complexes are tilting, and tilting mutation yields large family of derived equivalent symmetric algebras as the endomorphism algebras. 
Thus, it is a fundamental problem to describe tilting mutation of a given symmetric algebra.

%%%%%%BGA%%%%%%%%%%%%%%%%%%%%%%%%%%%%%%%%%%%
Brauer graph algebras, originating in modular representation theory of finite groups, form one of the most important classes of finite dimensional symmetric algebras. 
A Brauer graph algebra is defined from a combinatorial object called a Brauer graph. A Brauer graph consists of a ribbon graph -- a finite graph equipped with a cyclic ordering of the edges incident to each vertex -- and the multiplicity on each vertex. 
From its definition, there is a realization on oriented Riemann surfaces \cite{Labourie13,MS14} and a combinatorial/geometric approach based on ribbon graphs is widely used. 
Especially, Aihara \cite{Aihara15} introduced an operation of flip (with respect to a chosen edge), providing a construction of a new Brauer graph from the original one and showed the following result.

\begin{theorem}[\cite{Aihara15}, see also \cite{Kauer98}]
\label{intro:flip of BGA}
    Tilting mutation of Brauer graph algebras is compatible with flip of Brauer graphs. 
\end{theorem}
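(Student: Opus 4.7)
The plan is to fix a Brauer graph $G$ with an edge $e$ having endpoints $v_1, v_2$, compare the tilting mutation $\mu_e^-(A)$ of $A = A(G)$ at the summand $P_e$ with the Brauer graph algebra $A(\mu_e G)$ of the flipped graph, and identify the two by comparing quivers with relations. The main ingredient is an explicit description of both the mutated tilting complex and the flipped graph, with the combinatorics of ribbon graphs translating directly into the homological data.

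First I would recall the standard structure of $P_e$: its radical is (generically) the sum of two uniserial modules corresponding to the cyclic walks around $v_1$ and $v_2$, and the minimal left $\add(A/P_e)$-approximation $P_e \to P_{f_1} \oplus P_{f_2}$ is given by the two arrows of the Brauer quiver starting at $e$ (pointing to the immediate successor edges $f_i$ of $e$ at $v_i$). The silting mutation of $A$ at $P_e$ is then the two-term complex $T = (A/P_e) \oplus T_e^*$ with $T_e^* = (P_e \to P_{f_1} \oplus P_{f_2})$ concentrated in degrees $-1$ and $0$. Since $A$ is symmetric and the mutation preserves silting, $T$ is in fact tilting by a general result of Aihara--Iyama.

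Next I would compute $B = \End_{\Kb(\proj A)}(T)$. The dimensions of the Hom spaces between the $T_f = P_f$ (for $f \neq e$) and $T_e^*$ are read off from paths in the Brauer quiver that either start, end, or pass through the edge $e$, which in turn are controlled by the cyclic orderings at $v_1$ and $v_2$. In parallel I would write down $\mu_e G$ explicitly: one detaches $e$ and reattaches its endpoints at the corners of $G \setminus e$ that are ``adjacent'' to $e$ in the ribbon structure (i.e. between the predecessor and successor edges of $e$ at $v_1$ and $v_2$), which is precisely the combinatorial shadow of the cone construction. A vertex-by-vertex and arrow-by-arrow comparison then yields a morphism of quivers between the Gabriel quiver of $B$ and that of $A(\mu_e G)$, and the biserial relations on each side match because both come from the two cyclic walks meeting at each vertex of the (flipped) graph.

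The last step is to verify the relations, and this is also where the principal difficulty lies. Several degenerate configurations break the ``generic'' picture above: truncated edges (valency-one endpoints of multiplicity one), loops at a vertex, edges $e$ sharing both endpoints with another edge, and edges for which $v_1 = v_2$. In each case the approximation $P_e \to P_{f_1} \oplus P_{f_2}$ may collapse (one of the $f_i$ may coincide with $e$ itself or the two may coincide), and the flip has to be defined accordingly so that the identification $\End(T) \cong A(\mu_e G)$ still holds. I would set up a case analysis organized by the local picture at $v_1$ and $v_2$, treating the generic case first and then the degenerations; handling all of these uniformly -- especially keeping track of the zero relations induced by the cone differential -- is the main technical obstacle, but once it is done the theorem follows by matching generators and relations on both sides.
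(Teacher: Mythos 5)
Your outline is essentially correct and would lead to a proof of the Brauer-graph case, but it follows a genuinely different route from the one this paper takes. The paper does not prove Theorem~\ref{intro:flip of BGA} directly; it is obtained as a corollary of the more general Theorem~\ref{thm:main theorem} for Brauer configuration algebras, after observing that condition~(E) is automatic when every polygon is a $2$-gon. The proof of Theorem~\ref{thm:main theorem} has the opposite orientation from your plan: rather than computing the Gabriel quiver of $B=\End(T)$ from scratch and then matching it against the quiver of the flipped algebra, the paper starts from the \emph{known} quiver $Q_{\Gamma'}$ of the target, constructs an explicit algebra map $\varphi\colon KQ_{\Gamma'}\to\End(T)$ by assigning to each arrow an explicit chain map between summands of $T$ (Table~\ref{table:hom}), checks that $\varphi$ kills the relation ideal $I_{\Gamma'}$ (Proposition~\ref{prop:induced by varphi}), and then proves the induced map $\bar\varphi$ is an isomorphism by combining a socle-nonvanishing argument for injectivity (Proposition~\ref{prop:image of socle}) with a combinatorial dimension count for surjectivity (Proposition~\ref{prop:dim_equals}, using the two-term formula $(T,U)=(T^0,U^0)-2(T^{-1},U^0)+(T^{-1},U^{-1})$ and occurrence/valency identities). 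What this buys is exactly what you identify as the ``principal difficulty'': one never needs to determine which endomorphisms of $T$ are irreducible, and the dimension formula absorbs most of the bookkeeping for the degenerate configurations (truncated edges, loops, multiple edges, $v_1=v_2$) uniformly instead of case-by-case. Your plan, in contrast, requires identifying the Gabriel quiver of $\End(T)$ and the biserial relations on the nose, and you correctly flag but do not carry out the resulting case analysis; as written it is a sound roadmap rather than a complete proof, with the genuine content deferred to precisely the step the paper's method is designed to avoid.
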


We remark that the class of Brauer graph algebras is closed under derived equivalence \cite{AZ22} (see also \cite{OZ22}). 
For more research on Brauer graph algebras, see \cite{Alperin86,Benson91,Antipov07,Aihara14,AAC18, Schroll18,ES20,WZ22,AY23}.

%%%%%%BCA%%%%%%%%%%%%%%%%%%%%%%%%%%%%%%%%%% 
In \cite{GS16,GS17}, Green and Schroll introduced a new class of symmetric algebras called \emph{Brauer configuration algebras} 
and proved that the class of Brauer configuration algebras coincides with that of symmetric special multiserial algebras. 
A Brauer configuration algebra is defined from a combinatorial object called a \emph{Brauer configuration}, which is a generalization of Brauer graphs. 
Indeed, Brauer graphs are exactly Brauer configurations all of whose polygons are $2$-gons. 
Brauer configuration algebras include the well-studied class of symmetric algebras with radical cube zero \cite{Benson08,HK01,ES11,AA23}. 
Moreover, Brauer graph algebras are known to be tame, while tame Brauer configuration algebras are classified by \cite{Duffield18}. 

It is an important problem to study tilting mutation of a given Brauer configuration algebra $A_{\Gamma}$ in terms of its Brauer configuration $\Gamma$. 
Our starting point is the following observation in contrast to the case of Brauer graph algebras. 
Now, let $P_V$ be the indecomposable projective $A_{\Gamma}$-module corresponding to a polygon $V$ of $\Gamma$. 

\begin{proposition}[\rm Example \ref{example:BCAtilting}]
    Tilting mutation $\End(\mu^-_{P_V}(A_{\Gamma}))$ is not a Brauer configuration algebra in general. 
    Hence, the class of Brauer configuration algebras is not closed under derived equivalence. 
\end{proposition}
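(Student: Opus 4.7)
Since the proposition asserts a non-closure statement, my plan is to exhibit an explicit Brauer configuration $\Gamma$ and polygon $V$ for which the conclusion fails, then derive the derived-equivalence consequence abstractly. I would choose $\Gamma$ to be one of the smallest configurations that is \emph{not} a Brauer graph, so that $V$ is a polygon of size at least three; a natural candidate is a single triangle with minimal cyclic data at each vertex, giving a manageable $A_{\Gamma}$ that nevertheless lies genuinely beyond the Brauer graph setting.

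First I would write down $A_{\Gamma}$ as a quiver with admissible relations via the Green--Schroll construction, recording the Loewy structure of each indecomposable projective, in particular of $P_V$. Since $A_{\Gamma}$ is symmetric, a minimal left $\add(A_{\Gamma}/P_V)$-approximation $f\colon P_V \to P'$ produces the tilting complex $\mu^-_{P_V}(A_{\Gamma}) = (P_V \xrightarrow{f} P') \oplus (A_{\Gamma}/P_V)[1]$, and $B := \End(\mu^-_{P_V}(A_{\Gamma}))$ is then derived equivalent to $A_{\Gamma}$. I would next compute the Gabriel quiver of $B$ and its relations from the morphism spaces between summands of the mutated complex.

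The heart of the argument is to verify that $B$ is not a Brauer configuration algebra. By the Green--Schroll characterization, it is enough to show that $B$ fails to be symmetric special multiserial; symmetry is preserved under this mutation, so the task reduces to exhibiting an arrow $\alpha$ in $B$ together with two distinct arrows $\beta_1 \neq \beta_2$ starting at $t(\alpha)$ such that both $\alpha\beta_1$ and $\alpha\beta_2$ remain nonzero in $B$ (or the dual condition on predecessors). The intuition is that, at the vertex of $B$ corresponding to the mutated summand, the three or more edges of $V$ generate several inequivalent nonzero compositions, obstructing the special multiserial axiom precisely because $V$ is not a $2$-gon.

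The main obstacle is the purely local but delicate bookkeeping in the quiver-with-relations description of $B$: one must track how the approximation triangle and the cyclic relations of $A_{\Gamma}$ interact to produce the required multiple nonzero compositions, and verify that no relation in $B$ cancels all but one of them. Once this single example is handled, the second assertion that Brauer configuration algebras are not closed under derived equivalence is immediate, because tilting mutation of a symmetric algebra always yields a derived equivalence via Rickard's theorem.
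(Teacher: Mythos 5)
Your overall framework is right: pick a concrete $\Gamma$, compute $B := \End(\mu^-_{P_V}(A_{\Gamma}))$, show it violates the special multiserial condition (an arrow with two distinct nonzero successors), and then invoke Rickard for the derived-equivalence consequence. But the specific choice you propose is misdirected, and the guiding intuition is backwards.

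You plan to mutate at the triangle itself, motivated by the idea that ``the three or more edges of $V$ generate several inequivalent nonzero compositions $\ldots$ precisely because $V$ is not a $2$-gon.'' This is not what produces the obstruction. In the paper's example (a triangle $V_1$ with one $2$-gon attached at each of its three corners, so $A_{\Gamma}$ is the trivial extension of the type $D_4$ path algebra), mutating at the triangle $V_1$ gives back $A_{\Gamma}$: here $V_1$ satisfies condition (E) since each of its predecessors is a $2$-gon, and Theorem~\ref{thm:main theorem} applies. The counterexample arises instead by mutating at one of the $2$-gons $V_2,V_3,V_4$; that edge fails condition (E) because its $\sigma$-predecessor is the triangle $V_1$, not an edge or itself. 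The correct heuristic is therefore: the obstruction appears when a \emph{predecessor} of the mutated polygon is large, not when the mutated polygon itself is large. Also, ``a single triangle with minimal cyclic data'' alone gives a local algebra (in the extreme case $K[x]/(x^2)$) with no nontrivial tilting complexes; you need additional polygons attached, and then you must pick the right one to mutate at. With that correction the rest of your plan (compute the approximation $P_V\to P'$, write down $B$ as a bound quiver algebra, exhibit the failure of the multiserial condition at the vertex receiving the arrow from the mutated summand) matches the paper's argument.
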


Our main result gives a sufficient condition for a tilting mutation of Brauer configuration algebras to be Brauer configuration algebras again. 
More precisely, we are able to introduce the notion of flip of Brauer configurations under the following condition (E).

\begin{definition}[See Definitions \ref{def:conditionE} and \ref{def:flip BC} for the details]
    We say that a polygon $V$ of $\Gamma$ satisfies the condition (E) 
    if every predecessor of $V$ in the cyclic ordering around each vertex is either an edge or $V$ itself. 
    In this case, we define a new Brauer configuration $\mu^-_V(\Gamma)$, which we call a (left) \emph{flip} of $\Gamma$ at $V$. 
\end{definition}

Dually, one can define a right flip. 
A typical example of flip is visualized in the following Figure \ref{fig:intro_flip}, where we describe the cyclic ordering around vertices in a counterclockwise direction. 
\begin{figure}[h]
\begin{tabular}{cccccc}
\begin{tikzpicture}
\node at(0,0) {$
\begin{tikzpicture}[scale = 0.65, baseline =0mm]
    \coordinate (0) at (0*36:1.5);
    \coordinate (1) at (1*36:1.5);
    \coordinate (2) at (2*36:1.5);
    \coordinate (3) at (3*36:1.5);
    \coordinate (4) at (4*36:1.5);
    \coordinate (5) at (5*36:1.5);
    \coordinate (6) at (6*36:1.5);
    \coordinate (7) at (7*36:1.5);
    \coordinate (8) at (8*36:1.5);
    \coordinate (9) at (9*36:1.5);
    
    \coordinate (t1) at ($(0)!0.5!(2)$);
    \draw[thick] (0)--(1) (2)--(3) (4)--(5) (6)--(7) (8)--(9); 
    \draw[line width =0mm,fill = white!20!lightgray] (0)--(2)--(4)--(6)--(8)--cycle;
    \draw[thick] (0)--(2)--(4)--(6)--(8)--cycle;
    \node at (0) {$\bullet$};
    \node at (1) {$\bullet$};
    \node at (2) {$\bullet$};
    \node at (3) {$\bullet$};
    \node at (4) {$\bullet$};
    \node at (5) {$\bullet$};
    \node at (6) {$\bullet$};
    \node at (7) {$\bullet$};
    \node at (8) {$\bullet$};
    \node at (9) {$\bullet$};
    
    \node at (0,0) {$V$};
    %\node at ($($(0)!0.5!(1)$)+(30:0.5)$) {$V_2$};
    %\node at ($($(2)!0.5!(3)$)+(150:0.5)$) {$V_3$};
    %\node at ($($(4)!0.5!(5)$)+(-90:0.5)$) {$V_4$};
\end{tikzpicture}$};
\node at(5,0) {$
\begin{tikzpicture}[scale = 0.65, baseline = 0mm]
    \coordinate (0) at (0*36:1.5);
    \coordinate (1) at (1*36:1.5);
    \coordinate (2) at (2*36:1.5);
    \coordinate (3) at (3*36:1.5);
    \coordinate (4) at (4*36:1.5);
    \coordinate (5) at (5*36:1.5);
    \coordinate (6) at (6*36:1.5);
    \coordinate (7) at (7*36:1.5);
    \coordinate (8) at (8*36:1.5);
    \coordinate (9) at (9*36:1.5);
    
    \coordinate (t1) at ($(0)!0.5!(2)$);
    \draw[thick] (0)--(1) (2)--(3) (4)--(5) (6)--(7) (8)--(9); 
    \draw[line width =0mm, fill = white!20!lightgray] (1)--(3)--(5)--(7)--(9)--cycle;
    \draw[thick] (1)--(3)--(5)--(7)--(9)--cycle;
    \node at (0) {$\bullet$};
    \node at (1) {$\bullet$};
    \node at (2) {$\bullet$};
    \node at (3) {$\bullet$};
    \node at (4) {$\bullet$};
    \node at (5) {$\bullet$};
    \node at (6) {$\bullet$};
    \node at (7) {$\bullet$};
    \node at (8) {$\bullet$};
    \node at (9) {$\bullet$};
    
    \node at (0,0) {$V$};
    %\node at ($($(0)!0.5!(1)$)+(30:0.5)$) {$V_2$};
    %\node at ($($(2)!0.5!(3)$)+(150:0.5)$) {$V_3$};
    %\node at ($($(4)!0.5!(5)$)+(-90:0.5)$) {$V_4$};
\end{tikzpicture}$};
\draw[->, fill=white,line width = 1.5pt] (1.7,0)--node[above]{\text{\small left flip at $V$}}(3.3,0); 
\draw[<-, fill=white,line width = 1.5pt] (1.7,-0.3)--node[below]{\text{\small right flip at $V$}}(3.3,-0.3); 

%\draw[<-, fill=white, line width = 1.5pt] (1.7,-0.4)--node[below]{\text{\small right flip at $V$}}(3.3,-0.4);
\end{tikzpicture}
&  
\begin{tikzpicture}[baseline = -12mm]
\node at(0,0) {$
\begin{tikzpicture}[scale = 0.65, baseline = 0mm]
        \coordinate (a) at (-1,0); 
        \coordinate (b) at (0.5,0);
        \coordinate (c) at (-1,2.4);
        
\draw[line width =0mm, thick, fill = white!20!lightgray] (a)..controls($(a)+(15:1.9)$)and($(c)+(0:1)$)..(c)..controls($(c)+(180:1)$)and($(a)+(165:1.9)$)..(a);
\draw[line width =0mm, thick] (a)..controls($(a)+(15:1.9)$)and($(c)+(0:1)$)..(c)..controls($(c)+(180:1)$)and($(a)+(165:1.9)$)..(a);

\draw[fill=white, line width =0mm, thick] (a)..controls($(a)+(35:1.7)$)and($(a)+(95:1.7)$)..(a); 
\draw[fill=white, line width =0mm, thick] (a)..controls($(a)+(85:1.7)$)and($(a)+(145:1.7)$)..(a);
\draw[thick] (a)--(b); 
\node at (a) {$\bullet$};
\node at (b) {$\bullet$};
\node at (c) {$\bullet$}; 
\node at (-1, 1.5) {$U$};
\end{tikzpicture}$}; 
\node at(4.7,0) {$
\begin{tikzpicture}[scale = 0.65, baseline = 0mm]
        \coordinate (a) at (-1,0); 
        \coordinate (b) at (0.5,0);
        \coordinate (c) at (0.5,2.4);
        
\draw[line width =0mm, thick, fill = white!20!lightgray] (b)..controls($(b)+(15:1.9)$)and($(c)+(0:1)$)..(c)..controls($(c)+(180:1)$)and($(b)+(165:1.9)$)..(b);
\draw[line width =0mm, thick] (b)..controls($(b)+(15:1.9)$)and($(c)+(0:1)$)..(c)..controls($(c)+(180:1)$)and($(b)+(165:1.9)$)..(b);

\draw[fill=white, line width =0mm, thick] (b)..controls($(b)+(35:1.7)$)and($(b)+(95:1.7)$)..(b); 
\draw[fill=white, line width =0mm, thick] (b)..controls($(b)+(85:1.7)$)and($(b)+(145:1.7)$)..(b);
\draw[thick] (a)--(b); 
\node at (a) {$\bullet$};
\node at (b) {$\bullet$};
\node at (c) {$\bullet$}; 
\node at (0.5, 1.5) {$U$};
\end{tikzpicture}$}; 
\draw[->, fill=white, line width = 1.5pt] (1.6,0)--node[above]{\text{\small left flip at $U$}}(3.2,0);
\draw[<-, fill=white, line width = 1.5pt] (1.6,-0.3)--node[below]{\text{\small right flip at $U$}}(3.2,-0.3);
\end{tikzpicture}
\end{tabular}
    \caption{Flip at a $5$-gon $V$ in the left and a (self-folded) $4$-gon $U$ in the right.}
    \label{fig:intro_flip}
\end{figure}
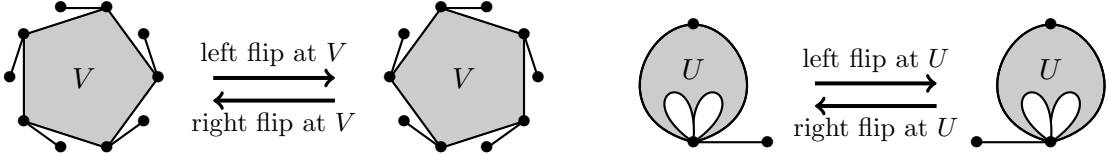

Now, we state our main result. 

\begin{theorem}[Theorem \ref{thm:main theorem}] 
\label{main theorem}
Let $\Gamma$ be a Brauer configuration and $A_{\Gamma}$ the Brauer configuration algebra of $\Gamma$ over an algebraically closed field $K$. 
For a polygon $V$ of $\Gamma$, if it satisfies the condition {\rm (E)}, we have 
\begin{equation*}
    \End(\mu_{P_{V}}^{-}(A_{\Gamma}))\cong A_{\mu_V^-(\Gamma)}.
\end{equation*}
In particular, it is again a Brauer configuration algebra. 
\end{theorem}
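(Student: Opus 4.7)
My plan is to adapt Aihara's proof of Theorem \ref{intro:flip of BGA} (the Brauer graph case) to the Brauer configuration setting, using condition (E) to control the local structure around $V$. The Brauer graph case corresponds to all polygons being 2-gons, for which condition (E) is automatic; the new content lies in handling higher-gons and self-folded polygons. The overall scheme is to write $\mu^-_{P_V}(A_\Gamma)$ explicitly as a two-term tilting complex, compute its endomorphism algebra by describing Hom spaces in $\KKK^b(\proj A_\Gamma)$ via explicit paths, and identify the resulting quiver with relations with the Brauer configuration algebra of $\mu^-_V(\Gamma)$.

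The first main task is to compute the minimal left $\add(A_\Gamma/P_V)$-approximation $f\colon P_V \to Q$ explicitly. For each vertex $v$ of $V$ (counted with multiplicity of occurrences if $V$ is self-folded at $v$), the arrow in the Gabriel quiver of $A_\Gamma$ out of $V$ at $v$ either targets a non-$V$ polygon $W_v$ directly (contributing a summand $P_{W_v}$ to $Q$) or is a loop at $V$ (in which case one composes loops until first reaching a non-$V$ polygon $W_v$, whose projective is then the contribution to $Q$). Condition (E) enters when analyzing the behaviour of $f$ modulo the relations: since every predecessor of $V$ is either an edge or $V$ itself, the relations at $V$ have a controlled form and the socle-second layer of $P_V$ admits a clean description. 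The left silting mutation is then $\mu^-_{P_V}(A_\Gamma) = (A_\Gamma/P_V) \oplus T^*$ with $T^* = (P_V \xrightarrow{f} Q)$ concentrated in degrees $-1$ and $0$, and its endomorphism algebra is computed from Hom spaces between $T^*$, its shift, and the remaining $P_W$, each given by explicit path bases in $A_\Gamma$.

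In parallel, I would define the flipped configuration $\mu^-_V(\Gamma)$ by replacing $V$ with a new polygon $V'$ whose vertices are obtained by rotating each vertex $v$ of $V$ past the predecessor of $V$ at $v$ --- either detaching the predecessor edge and reattaching its far endpoint as a vertex of $V'$, or retaining $v$ as a vertex of $V'$ when the predecessor is $V$ itself --- and then check that the Gabriel quiver and defining relations of $A_{\mu^-_V(\Gamma)}$ match those of $\End(\mu^-_{P_V}(A_\Gamma))$ under the natural identification of polygons of $\mu^-_V(\Gamma)$ with summands of $\mu^-_{P_V}(A_\Gamma)$. The main obstacle will be the bookkeeping in degenerate cases --- self-folded $V$, consecutive occurrences of $V$ in a cyclic order, and vertices with nontrivial multiplicity --- where the minimal approximation involves paths traversing loops at $V$ and the flipped configuration develops intricate local combinatorics, so that the cyclic, commutativity, and nilpotency relations of the two Brauer configuration algebras must be matched case by case.
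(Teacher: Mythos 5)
Your overall scheme---compute the minimal left approximation, define the flipped configuration by rotating each vertex of $V$ past its predecessor, and compare---is the same as the paper's, and your description of the approximation and of the flip is essentially correct. The gap lies in the last step. You propose to ``check that the Gabriel quiver and defining relations of $A_{\mu^-_V(\Gamma)}$ match those of $\End(\mu^-_{P_V}(A_{\Gamma}))$ \ldots\ case by case.'' Matching generators and verifying that the cyclic/commutativity/nilpotency relations of $I_{\Gamma'}$ hold in $\End(T)$ only establishes a \emph{surjection} $A_{\Gamma'} \twoheadrightarrow B$ onto some quotient, or equivalently one containment of ideals; it does not rule out that $\End(T)$ satisfies additional relations not generated by those of $I_{\Gamma'}$. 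Your proposal contains no mechanism for proving that the map is injective as well, which is the nontrivial half of the identification. ``Case by case'' inspection of relations does not, by itself, close this.

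The paper closes it with two devices you do not mention. First, injectivity of the induced map $\bar{\varphi}\colon A_{\Gamma'} \to \End(T)$ is proved by checking that the socle generators $C_h'^{\,\m'(s'(h))}$ have nonzero image (Proposition \ref{prop:image of socle}); for a symmetric algebra this forces injectivity. Second, surjectivity is obtained from a dimension count: Proposition \ref{prop:dim_equals} shows $\dim_K \End(T) = \dim_K A_{\Gamma'}$ by combining the explicit combinatorial formula for $(P_U,P_W)$ in Lemma \ref{lem:dim proj}, the identity \eqref{occ-occ} counting occurrences around each vertex, and the formula \eqref{dim formula} for $\Hom$-dimensions of two-term pretilting complexes over a symmetric algebra. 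Condition (E) enters crucially there: it guarantees that every direct summand of $T_V^0$ is an edge, so the formula \eqref{dim projUW} applies uniformly and the sums telescope. Without some analogue of this dimension (or basis) comparison, your plan cannot conclude that the presentation you exhibit is faithful, and this is precisely where a ``match the relations case by case'' argument would bog down without terminating.
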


Theorem \ref{intro:flip of BGA} is a special case of our theorem. Indeed, when $\Gamma$ is a Brauer graph, the condition (E) is automatically satisfied for all edges, and our flip coincides with that of Brauer graphs. 
%We note that Theorem \ref{thm:main theorem} is true for each field containing primitive $2^n$-th power of units for every $n$. 

We pose the next problem. 

\begin{problem}
    Is there a reasonable class of algebras which contains Brauer configuration algebras and is closed under derived equivalence?  
\end{problem}

This paper is organized as follows. 
In Section \ref{sec:preliminaries}, we recall basic definition of tilting theory and Brauer configuration algebras. 
In Section \ref{sec:main result}, we define flip of Brauer configurations (Definition \ref{def:flip BC}) under the condition (E) and prove our main result (Theorem \ref{thm:main theorem}).

\bigskip
{\bf Notation. }
Let $K$ be an algebraically closed field. We denote by $D:=\Hom_K(-,K)$ the $K$-duality. For a finite dimensional $K$-algebra $A$, we denote by $\mod A$ (resp. $\proj A$) the category of finitely generated (resp. finitely generated projective) right $A$-modules. We denote by $\Kb(\proj A)$ the homotopy category of bounded complexes of finitely generated projective $A$-modules. We denote by $\Db(A)$ the bounded derived category of $\mod A$. Throughout, all morphisms are taken in the derived category unless otherwise specified. Finally, we denote by $\#\mathcal{S}$ the cardinality of a given set $\mathcal{S}$.

%%%%%%%%%%%%%%%%%%%%%%%%%%%%%%%%%%%%%%%%
\section{Preliminaries}\label{sec:preliminaries}
%\noindent

We refer to \cite{ASS06, ARS95} for basic terminology of representation theory of finite dimensional algebras and to \cite{Happel88} for definitions of triangulated categories. 

\subsection{Tilting theory}
Let $A$ be a finite dimensional $K$-algebra.

\begin{definition}
Let $T=(T^{i},d^i)$ be a bounded complex of $\Kb(\proj A)$. 
\begin{enumerate}
    \item We say that $T$ is \emph{presilting} (resp., \emph{pretilting}) if $\Hom(T,T[i]) =0$ for all integer $i>0$ (resp., $i\neq 0$). 
    \item We say that $T$ is \emph{silting} (resp., \emph{tilting}) if it is presilting (resp., pretilting) and 
    $\thick T = \Kb(\proj A)$, where $\thick T$ is the smallest triangulated full subcategory of $\Kb(\proj A)$ which is closed under taking direct summands. 
    \item We say that $T$ is \emph{two-term} if $T^i=0$ for all $i\neq 0,-1$. 
\end{enumerate}
\end{definition}

The following result is basic in tilting theory. 

\begin{theorem}\label{derived-equivalent}{\rm \cite[Theorem 6.4]{Rickard89Morita}}
    Let $A$ and $B$ be finite dimensional $K$-algebras. 
    Then, the following conditions are equivalent. 
    \begin{enumerate}[\rm (a)]
        \item $\Db(A) \cong \Db(B)$ (In this case, we say that $A$ is derived equivalent to $B$). 
        \item There exists a tilting complex $T \in \Kb(\proj A)$ such that $B\cong \End(T)$.
    \end{enumerate}
\end{theorem}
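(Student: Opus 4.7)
The plan is to prove the two implications separately: (b)~$\Rightarrow$~(a) is a direct construction, while (a)~$\Rightarrow$~(b) requires recovering an intrinsic tilting complex from an abstract equivalence.

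For (b)~$\Rightarrow$~(a), I would construct the equivalence explicitly via the functor $F := \RHom_A(T,-) : \Db(A) \to \Db(B)$, which is well-defined since $T \in \Kb(\proj A)$ is a bounded complex of projectives. The pretilting assumption $\Hom(T,T[i]) = 0$ for $i \neq 0$ immediately yields $\RHom_A(T,T) \cong \End_A(T) = B$ concentrated in degree zero, so $F$ induces isomorphisms $\Hom_{\Db(A)}(T, T[n]) \to \Hom_{\Db(B)}(B, B[n])$ for every $n \in \Z$. I would then upgrade this to full faithfulness of $F$ on all of $\thick T$ via a standard dévissage/five-lemma argument (every object of $\thick T$ is built from $T$ by shifts, triangles, and summands), and the hypothesis $\thick T = \Kb(\proj A)$ completes full faithfulness on perfect complexes. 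Essential surjectivity then follows because the image contains $B_B = F(T)$, is closed under shifts, triangles, and direct summands, hence contains $\thick B_B = \Kb(\proj B)$, and extends to all of $\Db(B)$ by truncation and projective resolution; the quasi-inverse is explicitly given by the left adjoint $-\Lotimes_B T$.

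For (a)~$\Rightarrow$~(b), given an equivalence $F : \Db(B) \xrightarrow{\sim} \Db(A)$, the natural candidate is $T := F(B_B)$. The pretilting condition is automatic:
\[
\Hom_{\Db(A)}(T, T[i]) \;\cong\; \Hom_{\Db(B)}(B, B[i]) \;=\; 0 \quad \text{for } i \neq 0,
\]
and similarly $\End_{\Db(A)}(T) \cong \End_B(B_B) \cong B$, giving the required isomorphism of algebras. The generation condition $\thick T = \Kb(\proj A)$ follows from $\thick B_B = \Kb(\proj B)$ by transporting along $F$, once one knows that $F$ restricts to an equivalence between the subcategories of perfect complexes.

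The main obstacle is exactly this last point: showing that $T = F(B_B)$ actually lies in $\Kb(\proj A)$ rather than merely in $\Db(A)$. This requires an intrinsic characterization of $\Kb(\proj A) \subseteq \Db(A)$ that is preserved by any triangle equivalence --- concretely, identifying it with the full subcategory of compact objects (after passage to the unbounded derived category, to which $F$ extends), so that equivalences preserve compactness. Equivalently, one characterizes perfect complexes via the property that $\RHom_A(X,-)$ commutes with suitable coproducts, a property transported across $F$. This step is the technical heart of Rickard's original argument; once it is in place, the remaining verifications for $T$ reduce to formal properties of triangulated equivalences, and the two implications close up.
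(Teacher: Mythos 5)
This statement is quoted from \cite[Theorem 6.4]{Rickard89Morita} and is not proved in the paper, so there is no in-paper argument to compare against; I will assess your sketch on its own terms.

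Your outline of (b)~$\Rightarrow$~(a) is essentially the standard one: realize the equivalence by $\RHom_A(T,-)$ with quasi-inverse $-\Lotimes_B T$, establish full faithfulness on $\thick T=\Kb(\proj A)$ by d\'evissage, and extend to $\Db(A)$ using the identification $\Db(A)\simeq\mathsf{K}^{-,b}(\proj A)$ and brutal truncations (which you gesture at under ``truncation and projective resolution''). You do gloss over the point that full faithfulness, not only essential surjectivity, must be propagated from perfect complexes to $\Db$, but the same truncation argument handles both, so this is a presentational gap rather than a mathematical one.

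For (a)~$\Rightarrow$~(b) you correctly identify the technical heart --- showing $T:=F(B_B)$ lands in $\Kb(\proj A)$ --- but your proposed route has a real hazard. Characterizing perfect complexes as compact objects lives naturally in the \emph{unbounded} derived category $\mathsf{D}(A)$, and to use it you must first extend the given equivalence $\Db(B)\xrightarrow{\sim}\Db(A)$ to one of unbounded derived categories. That extension is itself a substantial theorem (Rickard/Keller), and the usual way to prove it is to first produce a tilting complex or a two-sided tilting complex --- exactly what you are trying to construct. Done carelessly, this is circular. Rickard's actual argument sidesteps this by characterizing $\Kb(\proj A)$ \emph{intrinsically inside} $\Db(A)$: an object $X\in\Db(A)$ lies in $\Kb(\proj A)$ if and only if for every $Y\in\Db(A)$ one has $\Hom_{\Db(A)}(X,Y[n])=0$ for all sufficiently large $n$ (the bound may depend on $Y$). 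This condition is visibly invariant under any triangle equivalence, so $F$ automatically restricts to $\Kb(\proj B)\xrightarrow{\sim}\Kb(\proj A)$, and then $T=F(B)$ is perfect and $\thick T=F(\thick B)=F(\Kb(\proj B))=\Kb(\proj A)$. If you replace the compactness-in-$\mathsf{D}$ step with this internal $\Hom$-vanishing criterion, the sketch closes up without circularity.
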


Next, we recall the notion of silting mutation. 

\begin{definition-proposition}\rm \cite[Theorem 2.31]{AI12}
Let $T=X\oplus Q$ be a basic silting complex in $\Kb(\proj A)$ with an indecomposable direct summand $X$. 
We take a triangle
\[
X\buildrel {f} \over\longrightarrow Q'\longrightarrow Y\longrightarrow X[1],  
\] 
where $f$ is a left minimal ($\add Q$)-approximation of $X$. 
Then, $Y$ is indecomposable.
In this case, $\mu_X^-(T) := Y\oplus Q$ is again a basic silting complex and called \emph{left mutation} of $T$ with respect to $X$. 
The {\it right mutation} $\mu^+_X(T)$ is defined dually. 
\end{definition-proposition}

\begin{example}
    Let $A$ be a finite dimensional algebra. 
    For each indecomposable projective $A$-module $P$, 
    we obtain a left mutation $\mu_{P}^{-}(A)$, which is  silting. 
    If it is tilting, then its endomorphism algebra $\End(\mu_{P}^{-}(A))$ is known as \emph{tilting mutation} of the algebra $A$. 
    Notice that it is derived equivalent to $A$ by Theorem \ref{derived-equivalent}.
\end{example}

We use the following results to compute the dimension of $K$-vector spaces. Now, for two complexes 
$T,U\in \Kb(\proj A)$, we set $(T,U) := \dim_K\Hom(T,U)$. 
For the case when $A$ is symmetric, the Nakayama functor $\nu:=D\Hom_A(-,A)$ is the identity. 
Thus, we obtain 
\begin{equation}\label{Serre duality}
    \Hom(P,X) \cong D\Hom(X,P) \quad \text{and} \quad (P,X) = (X,P)
\end{equation}
for two complexes $P,X\in \Kb(\proj A)$. 
This shows that silting complexes coincide with tilting complexes for symmetric algebras (see also \cite[Example 2.8]{AI12}). 
In addition, we can deduce from \cite{Happel88} that, if $T\oplus U\in \Kb(\proj A)$ is a two-term pretilting complex, then we have 
\begin{equation}\label{dim formula}
    (T,U) = (U,T) = 
    (T^0,U^0) - 2(T^{-1},U^{0}) + (T^{-1},U^{-1}). 
\end{equation}

\subsection{Brauer configurations}

We recall the definition of Brauer configurations from \cite{GS16, GS17}, which generalizes that of Brauer graphs (see \cite{Schroll18} for the basic knowledge of Brauer graphs). 

%%Schroll, a survey of Brauer graph algebras. 

\begin{definition}\label{def:BC}
A \emph{Brauer configuration} consists of 
\begin{itemize}
    \item a non-empty finite set $\Gamma_{0}$, 
    \item a non-empty finite collection $\Gamma_{1}$ of finite labeled multisets whose elements belong to $\Gamma_0$, 
    \item an assignment $\mathfrak{o}$ which assigns each $u\in \Gamma_0$ to a chosen cyclic ordering on elements of $\Gamma_1$ that contain $u$, including repetitions, and 
    \item a function $\m \colon \Gamma_{0} \to \mathbb{Z}_{>0}$.  
\end{itemize}
We further require that every element of $\Gamma_1$ has at least two elements and every element of $\Gamma_0$ appears in at least one element of $\Gamma_1$.
\end{definition}

For our convenience, we introduce the following definition which is equivalent to the definition of Brauer configurations. 
We should remark that the use of this combinatorial language is inspired by the ribbon graph theory for Brauer graph algebras from \cite{AAC18, MS14}.

\begin{definition} \label{def:our BC}
Let $\Gamma := (H, \sigma, \psi, s, \m)$ be a tuple, where $H$ is a non-empty finite set, 
$\sigma$ is a permutation on $H$, and 
$\psi$ is an equivalent relation on $H$ each of whose equivalence classes has at least two elements. 
\begin{enumerate}[\rm (1)]
    \item Each element of $H$ is called an \emph{angle} of $\Gamma$. 
    \item Each equivalence class in $H/\psi$ is called a \emph{polygon} of $\Gamma$. 
    For $m\geq 2$, it is called \emph{$m$-gon} if $m$ is its cardinality. 
    We denote by $[h]$ the polygon containing the angle $h\in H$.
    \item Let $s \colon H \to H/\langle \sigma \rangle$ be a canonical surjection. 
    Each element of $H/\langle \sigma \rangle$ is called a \emph{vertex} of $\Gamma$. 
    For a vertex $u$ of $\Gamma$, the \emph{valency} $\val(u)$ is defined by $\val(u) := \#s^{-1}(u)$. 
    \item For each vertex $u$ of $\Gamma$, 
    the $\sigma$-orbit $$(h,\sigma(h),\ldots,\sigma^{\val(u)-1}(h))$$ incident to $u$ is called the \emph{cyclic ordering} around $u$. 
    \item $\m \colon H/\langle\sigma\rangle \to \mathbb{Z}_{>0}$ is a function which we call the \emph{multiplicity function}. 
\end{enumerate}
\end{definition}

Let $\Gamma:=(H,\sigma,\psi,s,\m)$ be the above tuple. 
Then, it gives rise to a Brauer configuration in the sense of Definition \ref{def:BC} as follows: 
\begin{itemize}
    \item Let $\Gamma_0$ be the set $H/\langle \sigma \rangle$ of vertices of $\Gamma$.
    \item Let $\Gamma_1$ be a collection of 
    labeled multisets of the form $\{s(h) \mid h\in V\}$ for all polygons $V\in H/\psi$ of $\Gamma$. 
    \item For each $v\in \Gamma_0$, the cyclic ordering around $v$ is given by the $\sigma$-orbit incident to $v$. 
    \item A function $\m$ is the multiplicity function of $\Gamma$. 
\end{itemize}
Conversely, such a tuple can be obtained from any Brauer configuration. Thus, we impose the following convention.

\begin{convention}
Throughout this paper, by a Bauer configuration, we mean a tuple $(H, \sigma, \psi,s,\m)$ given in Definition \ref{def:our BC}.  
In addition, by a vertex, a polygon, etc. of a Brauer configuration, we mean those in Definition \ref{def:our BC}.
\end{convention}

We use the following notations.

\begin{definition}\label{def:basicBC}
    Let $\Gamma=(H,\sigma,\psi,s,\m)$ be a Brauer configuration. 
    \begin{enumerate}[\rm (1)]
        \item For a vertex $u$ and a polygon $V$ of $\Gamma$, 
        let $\occ(u,V) := \#\{h\in V \mid s(h)=u\}$. 
        By definition,  
        $\val(u)$ coincides with the sum of $\occ(u,V)$ for all polygons $V$. 
        \item We say that a polygon $V$ is \emph{self-folded} if there exists at least one vertex $u$ such that $\occ(u,V)>1$. 
        \item A vertex $u$ is called \emph{external} if $\val(u)=1$, and \emph{truncated} if $\val(u)=\m(u)=1$. 
        \item A polygon $V$ is called \emph{edge} if it is a $2$-gon, i.e., $V=\{h,h'\}$ for some $h,h'\in H$. 
        In this case, let $\bar{h} := h'$ and $\bar{h'} := h$. Thus, $h = \bar{\bar{h}}$ holds. 
        \item We say that $\Gamma$ is \emph{multiplicity-free} if $\mathfrak{m}(u) = 1$ for all vertices $u$. 
    \end{enumerate}
\end{definition}

In order to describe a given Brauer configuration $\Gamma=(H,\sigma,\psi,s,\m)$, we use the following convention. 
An $m$-gon $V=\{h_1,\ldots,h_m\}$ of $\Gamma$ is realized as an actual $m$-gon whose internal angles are labeled by $h_1,\ldots,h_m$. Notice that there are several choices of such labeling. 
When we describe the cyclic ordering $(h,\sigma(h),\ldots,\sigma^{\val(u)-1}(h))$ of angles around the vertex $u=s(h)$, we draw them in the plane locally so that the angles
$h,\sigma(h)\ldots,\sigma^{\val(u)-1}(h)$ 
appear around $u$ in this order counterclockwisely.

We give an example of Brauer configurations.

\begin{example}\label{example:BC}
We consider a Brauer configuration defined by the following data. 
Let $H := \{a_1,a_2,a_3,a_4,a_5,a_6,a_7,b,\bar{b},c,\bar{c},d,\bar{d},e,\bar{e},f,\bar{f},g,\bar{g}\}$ be the set of angles. 
Let $\psi$ be an equivalent relation on $H$ 
such that the set of polygons is given by $\{U_1\ldots, U_7\}$, 
where 
$U_1 := \{a_1,\ldots, a_7\}$,  
$U_2 := \{b,\bar{b}\}$,  
$U_3 := \{c,\bar{c}\}$,  
$U_4 := \{d,\bar{d}\}$,  
$U_5 := \{e,\bar{e}\}$,  
$U_6 := \{f,\bar{f}\}$ and    
$U_7 := \{g,\bar{g}\}$. 
In addition, let $\sigma$ be a permutation on $H$ such that 
the cyclic orderings around vertices are given by 
\begin{gather*}
    w_1 = (a_1,c,\bar{e},\bar{d},e,d), \ w_2 = (a_2,b,\bar{c}), \ w_3 = (a_3,a_4,a_5,\bar{b}), \\ 
    w_4 = (a_6), \ w_5 = (a_7,f,g), \ w_6 = (\bar{f}), \ w_7 = (\bar{g}).  
\end{gather*}
In particular, the polygons $U_1$, $U_4$ and $U_5$ are self-folded. 
Finally, we set $\mathfrak{m}(w_4) = 2$ and $\mathfrak{m}(w_i) = 1$ for all $i\neq 4$. 
Then, they define a Brauer configuration $\Gamma = (H,\sigma,\psi,s,\mathfrak{m})$, where $s$ is a canonical surjection associated with $\sigma$. 
A realization of $\Gamma$ is given by the following figure. 

\vspace{-14mm}
\begin{equation*}
\begin{tikzpicture}[scale =1.05]
\coordinate (1) at (90-72:1.5);
\coordinate (2) at (90:1.5); 
\coordinate (3) at (90+72*1:1.5);
\coordinate (4) at (90+72*2:1.5); 
\coordinate (5) at (90+72*3:1.5);
\coordinate (3a) at ($(3)+(120:1.3)$);
\coordinate (3b) at ($(3)+(-120:1.3)$);
\fill[line width =0mm, fill = white!20!lightgray] (1)--(2)--(3)--(4)--(5)--cycle; 

\draw[thick] (2)arc(90:90-72*2:1.5);
\draw[thick] (1)--(2)--(3)--(4)--(5)--cycle; 
\draw[fill=white, line width =0mm, thick] (5)..controls($(5)+(72:1.3)$)and($(5)+(72+108-50:1.3)$)..(5);
\draw[fill=white, line width =0mm, thick] (5)..controls($(5)+(72+50:1.3)$)and($(5)+(72+108:1.3)$)..(5);

\draw[thick] (3)--(3a); 
\draw[thick] (3)--(3b);

\draw[white, line width = 1.5mm] (2)..controls($(2)+(90:2.5)$)and($(2)+(90+85:2.5)$)..(2);
\draw[thick] (2)..controls($(2)+(90:2.5)$)and($(2)+(90+85:2.5)$)..(2);
\draw[white, line width = 1.5mm] (2)..controls($(2)+(90-70:2.7)$)and($(2)+(90+45:2.7)$)..(2);
\draw[thick] (2)..controls($(2)+(90-70:2.7)$)and($(2)+(90+45:2.7)$)..(2);

\node at(1) {$\bullet$}; 
\node at(2) {$\bullet$}; 
\node at(3) {$\bullet$}; 
\node at(4) {$\bullet$}; 
\node at(5) {$\bullet$}; 
\node at(3a) {$\bullet$};
\node at(3b) {$\bullet$};
\node at (0,0) {$U_1$}; 
\node at (-10:2) {$U_2$}; 
\node at (47:1.85) {$U_3$}; 
\node at ($(124:2.2)+(180:0.1)$) {$U_4$}; 
\node at ($(62:2.2)+(0:0.1)$) {$U_5$}; 
\node at (-2.2,1) {$U_6$}; 
\node at (-2.2,0) {$U_7$}; 

\draw[fill = white] ($(2)+(-90:0.4)$)circle(1.8mm); 
\node at ($(2)+(-90:0.4)$) {\tiny$a_1$};
\draw[fill = white] ($(3)+(-20:0.4)$)circle(1.8mm); 
\node at ($(3)+(-20:0.4)$) {\tiny$a_7$};
\draw[fill = white] ($(4)+(60:0.4)$)circle(1.8mm); 
\node at ($(4)+(60:0.4)$) {\tiny$a_6$};
\draw[fill = white] ($(5)+(180:0.8)$)circle(1.8mm); 
\node at ($(5)+(180:0.8)$) {\tiny$a_5$};
\draw[fill = white] ($(5)+(126:0.9)$)circle(1.8mm); 
\node at ($(5)+(126:0.9)$) {\tiny$a_4$};
\draw[fill = white] ($(5)+(72:0.8)$)circle(1.8mm); 
\node at ($(5)+(72:0.8)$) {\tiny$a_3$};
\draw[fill = white] ($(1)+(200:0.4)$)circle(1.8mm); 
\node at ($(1)+(200:0.4)$) {\tiny$a_2$};

\draw[fill = white] ($(1)+(-80:0.5)$)circle(1.8mm); 
\node at ($(1)+(-80:0.5)$) {\tiny$b$};
\draw[fill = white] ($(5)+(40:0.5)$)circle(1.8mm); 
\node at ($(5)+(40:0.5)$) {\tiny$\bar{b}$};

\draw[fill = white] ($(2)+(-8:0.6)$)circle(1.8mm); 
\node at ($(2)+(-8:0.6)$) {\tiny$c$};
\draw[fill = white] ($(1)+(112:0.5)$)circle(1.8mm); 
\node at ($(1)+(112:0.5)$) {\tiny$\bar{c}$};

\draw[fill = white] ($(2)+(165:0.7)$)circle(1.8mm); 
\node at ($(2)+(165:0.7)$) {\tiny$d$};
\draw[fill = white] ($(2)+(95:0.7)$)circle(1.8mm); 
\node at ($(2)+(95:0.7)$) {\tiny$\bar{d}$};

\draw[fill = white] ($(2)+(132:0.75)$)circle(1.7mm); 
\node at ($(2)+(132:0.75)$) {\tiny$e$};
\draw[fill = white] ($(2)+(27:0.7)$)circle(1.7mm); 
\node at ($(2)+(27:0.7)$) {\tiny$\bar{e}$};

\draw[fill = white] ($(3)+(120:0.4)$)circle(1.6mm); 
\node at ($(3)+(120:0.4)$) {\tiny$f$};
\draw[fill = white] ($(3)+(120:1)$)circle(1.6mm); 
\node at ($(3)+(120:1)$) {\tiny$\bar{f}$};

\draw[fill = white] ($(3)+(-120:0.4)$)circle(1.6mm); 
\node at ($(3)+(-120:0.4)$) {\tiny$g$};
\draw[fill = white] ($(3)+(-120:0.95)$)circle(1.6mm); 
\node at ($(3)+(-120:0.95)$) {\tiny$\bar{g}$};

\node at ($(2) + (200:0.7)$) {\small$w_1$};
\node at (1) [right]{\small$w_2$};
\node at (5) [below]{\small$w_3$};
\node at (4) [below]{\small$w_4$};
\node at (3) [left]{\small$w_5$};
\node at (3a) [left]{\small$w_6$};
\node at (3b) [left]{\small$w_7$};
\end{tikzpicture}
\end{equation*}
\end{example}

\subsection{Brauer configuration algebras}
Next, we recall the definition of Brauer configuration algebras \cite{GS17}. 
Our definition is slightly different from theirs because of Definition \ref{def:our BC}. 

\begin{definition}\label{def:BCA}
    Let $\Gamma := (H,\sigma,\psi,s,\m)$ be a Brauer configuration. Let $Q_{\Gamma}$ be a finite quiver defined as follows: 
    \begin{itemize}
        \item The set of vertices is the set $H/\psi$ of polygons of $\Gamma$. 
        \item The set of arrows is in bijection with the set $H$ of angles of $\Gamma$, 
        where we draw an arrow $[h]\to [\sigma(h)]$ for every $h\in H$. 
        We write this arrow by the same symbol $h$ or $\sigma^0(h)$ if there is no confusion. 
    \end{itemize}

For given $h,f\in H$ such that $f=\sigma^{m}(h)$ for some $1\leq m \leq \val(s(h))$, 
let $C_{h,f}$ be a path 
\begin{equation}\label{special cycle}
    C_{h,f} \colon [h] \xrightarrow{\sigma^0(h)} [\sigma(h)] \xrightarrow{\sigma(h)} [\sigma^2(h)] \longrightarrow \cdots \longrightarrow 
    [\sigma^{m-1}(h)] \xrightarrow{\sigma^{m-1}(h)} [\sigma^{m}(h)= f] 
\end{equation}
of length $m$ in the quiver $Q_{\Gamma}$. 
We have a cycle $C_{h} := C_{h,h}$ of length $\val(s(h))$. 
Let $I_{\Gamma}$ be an ideal in the path algebra $KQ_{\Gamma}$ generated by all the relations in (BC1) and (BC2): 

\begin{itemize}
    \item[(BC1)] For any polygon $V$ and any $h,f\in V$,   
    \begin{equation*}
        C_{h}^{\m(s(h))} - C_{f}^{\m(s(f))}.
    \end{equation*}
    \item[(BC2)] All paths in $Q_{\Gamma}$ 
    of length two which are not sub-paths of $C_{h}^{\m(s(h))}$ for any $h\in H$. 
\end{itemize}
    We define $A_{\Gamma}:=KQ_{\Gamma}/I_{\Gamma}$ and call it \emph{Brauer configuration algebra} of $\Gamma$. 
\end{definition}

In the above definition, for each $m$-gon $V$ of $\Gamma$, there are precisely $m$ arrows starting at $V$ and there are precisely $m$ arrows ending at $V$ in the quiver $Q_{\Gamma}$. In particular, $Q_{\Gamma}$ does not depend on the multiplicity function $\m$. 
On the other hand, the ideal $I_{\Gamma}$ is not admissible in general. In fact, $I_{\Gamma}$ may have elements of the form $\sigma^0(h) - C_{f}^{\m(s(f))}\in I_{\Gamma}$, where $h,f\in H$ are angles such that $s(h)$ is a truncated vertex and $s(f)$ is a non-truncated vertex. 
The Gabriel quiver of $A_{\Gamma}$ can be obtained from $Q_{\Gamma}$ by removing all such loops $\sigma^0(h)$ when $s(h)$ is a truncated vertex. 
Then, we find that our Brauer configuration algebras are naturally isomorphic to those defined in \cite[Definition 2.5]{GS17}. 
Notice that our definition includes the cases when $\Gamma$ consists of the single polygon all of whose vertices are truncated, where the corresponding Brauer configuration algebras are isomorphic to $K[x]/(x^2)$.
 
\begin{proposition}\cite[Theorem 4.1]{GS16}\label{prop:BCA is SMA}
\begin{enumerate}[\rm (a)]
    \item A Brauer configuration algebra is a symmetric special multiserial algebra. 
    \item A Brauer graph algebra is a Brauer configuration algebra all of whose polygons are edges. 
\end{enumerate} 
\end{proposition}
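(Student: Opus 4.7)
The plan is to verify both parts directly from Definition~\ref{def:BCA}, reducing everything to the combinatorics of the special cycles $C_{h,f}$.

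For part (a), I would treat the two adjectives separately. To see that $A_\Gamma$ is special multiserial, I would argue that for any arrow $h$ of $Q_\Gamma$ (indexed by an angle $h\in H$, going from $[h]$ to $[\sigma(h)]$), the only possible successor arrow $f$ with $h\cdot f\notin I_\Gamma$ is $f=\sigma(h)$. This is forced by relation (BC2): the product $h\cdot f$ survives only if it appears as a sub-path of some $C_g^{\m(s(g))}$, and by the definition~(\ref{special cycle}) of $C_h$ this requires $f=\sigma(h)$. Dually, the only predecessor of $h$ is the arrow $\sigma^{-1}(h)$. Passing to the Gabriel quiver amounts to deleting loops at truncated vertices and leaves these uniqueness properties intact, so $A_\Gamma$ is special multiserial.

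To see that $A_\Gamma$ is symmetric, I would construct an explicit symmetrizing linear form. First I would identify a $K$-basis of $A_\Gamma$ consisting of the classes of the paths $C_{h,\sigma^m(h)}$ for $h\in H$ and $0\leq m<\m(s(h))\val(s(h))$, subject to the identifications imposed by (BC1). For each polygon $V$, the socle of the indecomposable projective $P_V$ is one-dimensional, spanned by the class $z_V$ of $C_h^{\m(s(h))}$ for any $h\in V$; by (BC1) this class is independent of the choice of $h$. Define $\lambda\colon A_\Gamma\to K$ by $\lambda(z_V)=1$ for every polygon $V$ and $\lambda=0$ on all other basis elements. The associated bilinear form $(x,y):=\lambda(xy)$ is manifestly associative. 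Non-degeneracy follows from the observation that every non-zero basis path $C_{h,f}$ can be extended on the right within $C_h^{\m(s(h))}$ so as to reproduce the socle generator $z_{[h]}$, while symmetry $\lambda(xy)=\lambda(yx)$ reduces to an assertion about cyclic rotations inside $C_h^{\m(s(h))}$, which is immediate from (BC1).

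For part (b), I would simply unpack definitions: a Brauer graph, in the ribbon-graph formulation of \cite{AAC18,MS14}, is exactly a tuple $(H,\sigma,\psi,s,\m)$ in which every $\psi$-class has cardinality two; under this restriction the quiver $Q_\Gamma$ and the relations (BC1), (BC2) of Definition~\ref{def:BCA} coincide with those in the standard presentation of Brauer graph algebras in \cite{Schroll18}, so the two classes of algebras agree. The main obstacle, as I see it, lies in the symmetry and non-degeneracy of $\lambda$ in part (a): the ideal $I_\Gamma$ is not admissible when truncated vertices occur, so one must first exhibit a workable basis of $A_\Gamma$ and then compare two basis paths through the relation $C_h^{\m(s(h))}-C_f^{\m(s(f))}$ whenever $[h]=[f]$, with a separate bookkeeping for the degenerate polygon all of whose vertices are truncated, where $A_\Gamma\cong K[x]/(x^2)$. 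The special multiserial property and part (b) are, by contrast, routine once the quiver--relation presentation is in hand.
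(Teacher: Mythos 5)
The paper does not give its own proof of this proposition: it is imported verbatim from Green--Schroll, \cite[Theorem~4.1]{GS16}, and used as a black box, so there is nothing in the paper to compare your argument against. Your sketch reconstructs the standard proof, and the main steps are essentially correct. The special multiserial property is precisely the content of (BC2): for each arrow $h$ of $Q_\Gamma$, the unique successor arrow whose composition with $h$ survives modulo $I_\Gamma$ is $\sigma(h)$, and dually the unique predecessor is $\sigma^{-1}(h)$; passing to the Gabriel quiver only deletes loops and cannot create new surviving length-two compositions. Symmetry via the socle-supported linear form $\lambda$ is the right construction. Two points are worth tightening. First, your stated basis has an off-by-one issue: for $m=0$ the intended element is the constant path $e_{[h]}$, not $C_{h,h}$, which by the paper's definition~\eqref{special cycle} is the full cycle of length $\val(s(h))$; it is cleaner to list the basis as $\{e_U\}_U \cup \{C_h^r C_{h,\sigma^t(h)} : h\in H,\ 0\le r < \m(s(h)),\ 1\le t\le \val(s(h))\}$ modulo the identifications from (BC1). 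Second, for non-degeneracy the sharpest formulation is that $e_U A_\Gamma e_W$ and $e_W A_\Gamma e_U$ are dually paired by $\lambda$: each basis path $C_h^r C_{h,f}$ from $[h]$ to $[f]$ (with $f=\sigma^t(h)$) has the unique complementary path $C_{f,h}C_h^{\m(s(h))-1-r}$ whose product with it equals the socle element $z_{[h]}$, and the constant path $e_U$ is paired with $z_U$; the fact that $\lambda$ vanishes on every non-socle basis element then makes the Gram matrix a permutation matrix in each $(U,W)$-block, and the special polygon with all vertices truncated (where $A_\Gamma\cong K[x]/(x^2)$) is handled by the same bookkeeping. Part (b) is indeed a direct unwinding of definitions. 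In short, your route is sound but supplies a proof the paper deliberately delegates to the literature.
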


Let $A_{\Gamma}=KQ_{\Gamma}/I_{\Gamma}$ be the Brauer configuration algebra of a Brauer configuration $\Gamma$. 
By definition, there is a one-to-one correspondence between polygons $U$ and the (isomorphism classes of) indecomposable projective $A_{\Gamma}$-modules $P_U$. 
One can easily compute homomorphisms between two indecomposable projective modules as follows. 
Now, for two polygons $U,W$ of $\Gamma$, we set  
\begin{equation}    \label{eq:C_WU}
\mathcal{C}(W,U) := \left\{C_{h}^{r}C_{h,f} \,\middle|\, 
\begin{matrix}    
\text{
$h\in W$, $0\leq r\leq \m(s(h))-1$} \\  
\text{$f\in U$, $f = \sigma^t(h)$, $1\leq t \leq \val(s(h))$}
\end{matrix}\right\} 
\end{equation}
and denote by $\rho_{\alpha} \colon P_U\to P_W$ 
the homomorphism induced by a path $\alpha\in \mathcal{C}(W,U)$. 

\begin{lemma}\label{lem:Hom basis}
Let $U,W$ be polygons of $\Gamma$. 
Then, the set  
$\{\rho_{\alpha} \mid \alpha\in \mathcal{C}(W,U)\}$ forms a $K$-basis of the subspace of $\Hom(P_U,P_W)$ of radical maps from $P_U$ to $P_W$.
\end{lemma}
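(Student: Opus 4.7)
The plan is to translate the statement into a basis computation in the bound path algebra. Via the canonical isomorphism $\Hom_{A_\Gamma}(P_U,P_W)\cong e_W A_\Gamma e_U$ sending $\phi\mapsto\phi(e_U)$, the subspace of radical morphisms is identified with $e_W(\rad A_\Gamma)e_U$, which is spanned by the residues modulo $I_\Gamma$ of positive-length paths from $W$ to $U$ in $Q_\Gamma$. The goal becomes to show that the set of such residues is exactly $\{\rho_\alpha\mid\alpha\in\mathcal{C}(W,U)\}$ and that it is linearly independent.

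For the spanning half, I would take a positive-length path $p=\beta_0\beta_1\cdots\beta_{\ell-1}$ from $W$ to $U$ in $Q_\Gamma$ and analyse it arrow by arrow. Its first arrow $\beta_0=:h$ lies in $W$. Using (BC2), every length-two sub-path $\beta_i\beta_{i+1}$ of a non-vanishing path must be a sub-path of some $C_{h'}^{\m(s(h'))}$; since such a cycle uses arrows from a single $\sigma$-orbit and consecutive arrows within it are related by $\sigma$, this forces $\beta_{i+1}=\sigma(\beta_i)$ inductively. Hence $p=h\,\sigma(h)\,\sigma^2(h)\cdots\sigma^{\ell-1}(h)$, which rewrites as $C_h^r C_{h,f}$ with $\ell=r\,\val(s(h))+t$, $f=\sigma^t(h)$, and $1\leq t\leq\val(s(h))$; the endpoint condition $[f]=U$ forces $f\in U$. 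To obtain the bound $r\leq\m(s(h))-1$, I would invoke (BC1) to rewrite $C_h^{\m(s(h))}$ as $C_{h'}^{\m(s(h'))}$ for an angle $h'\in W$ lying in a different $\sigma$-orbit from $h$; the resulting expression then contains a length-two transition between two distinct $\sigma$-orbits, which fails the (BC2) admissibility criterion and hence vanishes. The degenerate case in which all angles of $W$ share a single vertex is handled separately by a direct (BC2) computation, mirroring the self-folded $2$-gon analysis.

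For linear independence, the listed paths are pairwise distinct monomials in $KQ_\Gamma$, and the only relations in $I_\Gamma$ that can identify two of them are the (BC1) binomials $C_h^{\m(s(h))}-C_{h'}^{\m(s(h'))}$ for $h,h'$ in a common polygon. Such identifications affect only the top-length cycles and hence arise only when $U=W$, where they collapse the socle representatives of $e_W A_\Gamma e_W$ into a single morphism (so the statement is to be read with the obvious folding in that case). The full independence claim is cleanest to obtain by appealing to the explicit monomial $K$-basis of $A_\Gamma$ constructed in the original Green--Schroll papers \cite{GS16,GS17} and intersecting it with $e_W A_\Gamma e_U$.

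The main obstacle I expect is the bound $r\leq\m(s(h))-1$: (BC2) alone, being a length-two relation, does not bound how many times one may traverse a cycle of arrows within a single $\sigma$-orbit. The bound emerges only from the combined use of (BC1) and (BC2), and the argument must split on whether the polygon $W$ has angles in more than one $\sigma$-orbit; in the latter case the (BC1)-then-(BC2) trick fails and one reduces directly within a single orbit, which is where the careful bookkeeping lies.
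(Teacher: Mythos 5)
The paper itself only states that the lemma is ``obvious from (BC1) and (BC2)'' and gives no written argument, so there is no proof to compare against; your proposal supplies the details the authors omit, and it is essentially correct. The spanning argument is right: passing to $e_W A_\Gamma e_U$, the (BC2) monomial relations force consecutive arrows of any nonvanishing positive-length path to satisfy $\beta_{i+1}=\sigma(\beta_i)$ (since the only length-two subpaths of any $C_g^{\m(s(g))}$ are of the form $\sigma^i(g)\,\sigma^{i+1}(g)$), so every such path has the shape $C_h^{r}C_{h,f}$ with $h\in W$ the first arrow, and then the bound $r\le\m(s(h))-1$ comes from combining (BC1) with (BC2) exactly as you describe.

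One correction to your ``main obstacle'' remark: the (BC1)-then-(BC2) step does \emph{not} fail when all angles of $W$ lie in one $\sigma$-orbit, and no separate analysis is needed. The point is that (BC2) cares only about \emph{consecutive} arrows in a cycle, i.e.\ whether $\beta_{i+1}=\sigma(\beta_i)$, not about whether the two arrows lie in different $\sigma$-orbits. Choose any $h'\in W$ with $h'\neq h$ (possible since $\#W\ge 2$). Then (BC1) rewrites $C_h^{\m(s(h))}$ as $C_{h'}^{\m(s(h'))}$, whose last arrow is $\sigma^{-1}(h')$, and the length-two path $\sigma^{-1}(h')\cdot h$ fails (BC2) precisely because $\sigma(\sigma^{-1}(h'))=h'\neq h$ --- this holds regardless of whether $s(h)=s(h')$, since $\sigma$ is a permutation and hence $\sigma^{-1}$ is injective. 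So the uniform argument covers the self-folded case as well; the case split you anticipated is unnecessary.

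For linear independence, deferring to the monomial basis of Green--Schroll and intersecting with $e_WA_\Gamma e_U$ is a perfectly good way to close the argument. Your observation that the only coincidences are among the top-length cycles $C_h^{\m(s(h))}$ with $h\in W$, which (BC1) identifies with a single socle element when $U=W$, is correct; note that the statement already accounts for this because $\{\rho_\alpha\mid\alpha\in\mathcal{C}(W,U)\}$ is a \emph{set} of morphisms and so records these once --- no extra ``folding'' convention is needed.
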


\begin{proof}
    It is obvious from (BC1) and (BC2). 
\end{proof}

\begin{example}
Let $\Gamma$ be a Brauer configuration given in Example \ref{example:BC}. 
We obtain a Brauer configuration algebra $A_{\Gamma} = kQ_{\Gamma}/I_{\Gamma}$, where $Q_{\Gamma}$ is the quiver given by 

\begin{equation*}
\xymatrix@C=30pt@R=35pt{ 
 U_6 \ar@(ul,dl)|-{\bar{f}} 
 \ar@{->}[d]|-{f} & U_4 \ar@{->}[d]|-{d} \ar@{->}[r]|(.42){\bar{d}} & 
 U_5 \ar@{->}[l]<2mm>|(.42){e} \ar@{->}[l]<-2mm>|(.42){\bar{e}}& U_3 \ar@{->}[l]<-1mm>|-{c} 
 \ar@{->}[lld]<-0.7mm>|(.4){\bar{c}} 
  \\ 
 U_7 \ar@(ul,dl)|-{\bar{g}} 
 \ar@{->}[r]<1mm>|-{g} 
 & U_1 \ar@(l,ld)|-{a_6} \ar@(dl,d)|-{a_3} \ar@(d,dr)|-{a_4}
 \ar@{->}[lu]<-1mm>|-{a_7}
 \ar@{->}[rr]<1.2mm>|(.45){a_2} 
 \ar@{->}[rr]<-2.6mm>|(.45){a_5} 
 \ar@{->}[rru]<2.5mm>|(.4){a_1}
 && U_2 \ar@{->}[u]<0mm>|-{b} \ar@{->}[ll]<0.7mm>|(.45){\bar{b}} 
}
\end{equation*} 
and $I_{\Gamma}$ is the ideal generated by the following relations: 
\begin{itemize}
    \item $a_1c\bar{e}\bar{d}ed = a_2 b \bar{c} = a_3a_4a_5\bar{b} =a_4a_5\bar{b}a_3 = a_5\bar{b}a_3a_4 = a_6^2 = a_7fg$, 
    $b\bar{c}a_2 = \bar{b}a_3a_4a_5$, $c\bar{e}\bar{d}eda_1 = \bar{c}a_2b$, $da_1c\bar{e}\bar{d}e = \bar{d}eda_1c\bar{e}$,
    $eda_1c\bar{e}\bar{d} = \bar{e}\bar{d}eda_1c$, $fga_7=\bar{f}$ and $ga_7f = \bar{g}$. 
    \item All paths of length $2$ which are not sub-paths of the above monomials. 
\end{itemize}
In the following figure, we describe the indecomposable projective $A_{\Gamma}$-module $P_{V_i}$ corresponding to the polygon $V_i$ in term of its composition series, where each number $i$ shows the simple $A_{\Gamma}$-module at $V_i$. 
\begin{equation*}
    \begin{tabular}{ccccccccc}
    %$P_1$ & $P_2$ & $P_3$ & $P_4$ & $P_5$ & $P_6$ &$P_7$ \\ 
    \begin{tikzpicture} [baseline=0mm]
    \coordinate (x) at (0.7,0) ;
    \coordinate (y) at (0,1.3) ;
    %%%%top
    \node (top) at ($0*(x)+2.2*(y)$) {$1$};
    %%%%%
    \node (q1)at ($-4*(x)+1.5*(y)$) {$3$};
    \node (q2)at ($-4*(x)+0.75*(y)$) {$5$};
    \node (q3)at ($-4*(x)+0*(y)$) {$4$};
    \node (q4)at ($-4*(x)+-0.75*(y)$) {$5$};
    \node (q5)at ($-4*(x)+-1.5*(y)$) {$4$};
    %%%%%
    \node (w1)at ($-3*(x)+1*(y)$) {$2$};
    \node (w2)at ($-3*(x)+-1*(y)$) {$3$};
    %%%%%
    \node (e1)at ($-2*(x)+1*(y)$) {$1$};
    \node (e2)at ($-2*(x)+0*(y)$) {$1$};
    \node (e3)at ($-2*(x)+-1*(y)$) {$2$};
    %%%%%
    \node (r1)at ($-1*(x)+1*(y)$) {$1$};
    \node (r2)at ($-1*(x)+0*(y)$) {$2$};
    \node (r3)at ($-1*(x)+-1*(y)$) {$1$};
    %%%%%
    \node (t1)at ($0*(x)+1*(y)$) {$2$};
    \node (t2)at ($0*(x)+0*(y)$) {$1$};
    \node (t3)at ($0*(x)+-1*(y)$) {$1$};
    %%%%%
    \node (y1)at ($1*(x)+0*(y)$) {$1$};
    %%%%%
    \node (u1)at ($2*(x)+1*(y)$) {$6$};
    \node (u2)at ($2*(x)+-1*(y)$) {$7$};
    %%%%%
    \node (soc)at ($0*(x)+-2.2*(y)$) {$1$};
    %%%%soc
    %%%%
    \draw[-] (top)--node[fill=white, inner sep = 0.4mm]{\scriptsize$a_1$}(q1)--node[fill=white, inner sep = 0.4mm]{\scriptsize$c$}(q2)--node[fill=white, inner sep = 0.4mm]{\scriptsize$\bar{e}$}(q3)--node[fill=white, inner sep = 0.4mm]{\scriptsize$\bar{d}$}(q4)--node[fill=white, inner sep = 0.4mm]{\scriptsize$e$}(q5)--node[fill=white, inner sep = 0.4mm]{\scriptsize$d$}(soc);
    \draw[-] (top)--node[fill=white, inner sep = 0.4mm]{\scriptsize$a_2$}(w1)--node[fill=white, inner sep = 0.4mm]{\scriptsize$b$}(w2)--node[fill=white, inner sep = 0.4mm]{\scriptsize$\bar{c}$}(soc);
    \draw[-] (top)--node[fill=white, inner sep = 0.4mm]{\scriptsize$a_3$}(e1)--node[fill=white, inner sep = 0.4mm]{\scriptsize$a_4$}(e2)--node[fill=white, inner sep = 0.4mm]{\scriptsize$a_5$}(e3)--node[fill=white, inner sep = 0.4mm]{\scriptsize$\bar{b}$}(soc);
    \draw[-] (top)--node[fill=white, inner sep = 0.4mm]{\scriptsize$a_4$}(r1)--node[fill=white, inner sep = 0.4mm]{\scriptsize$a_5$}(r2)--node[fill=white, inner sep = 0.4mm]{\scriptsize$\bar{b}$}(r3)--node[fill=white, inner sep = 0.4mm]{\scriptsize$a_3$}(soc);
    \draw[-] (top)--node[fill=white, inner sep = 0.4mm]{\scriptsize$a_5$}(t1)--node[fill=white, inner sep = 0.4mm]{\scriptsize$\bar{b}$}(t2)--node[fill=white, inner sep = 0.4mm]{\scriptsize$a_3$}(t3)--node[fill=white, inner sep = 0.4mm]{\scriptsize$a_4$}(soc);
    \draw[-] (top)--node[fill=white, inner sep = 0.4mm]{\scriptsize$a_6$}(y1)--node[fill=white, inner sep = 0.4mm]{\scriptsize$a_6$}(soc); 
    \draw[-] (top)--node[fill=white, inner sep = 0.4mm]{\scriptsize$a_7$}(u1)--node[fill=white, inner sep = 0.4mm]{\scriptsize$f$}(u2)--node[fill=white, inner sep = 0.4mm]{\scriptsize$g$}(soc);
    \end{tikzpicture} & 
    \begin{tikzpicture} [baseline=0mm]
    \coordinate (x) at (0.7,0) ;
    \coordinate (y) at (0,1.3) ;
    %%%%top
    \node (top) at ($0*(x)+2.2*(y)$) {$2$};
    %%%%%
    \node (w1)at ($-0.75*(x)+1*(y)$) {$3$};
    \node (w2)at ($-0.75*(x)+-1*(y)$) {$1$};
    %%%%%
    \node (e1)at ($0.75*(x)+1*(y)$) {$1$};
    \node (e2)at ($0.75*(x)+0*(y)$) {$1$};
    \node (e3)at ($0.75*(x)+-1*(y)$) {$1$};
    %%%%%
    \node (soc) at ($0*(x)+-2.2*(y)$) {$2$};
    \draw[-] (top)--node[fill=white, inner sep = 0.4mm]{\scriptsize$b$}(w1)--node[fill=white, inner sep = 0.4mm]{\scriptsize$\bar{c}$}(w2)--node[fill=white, inner sep = 0.4mm]{\scriptsize$a_2$}(soc); 
    \draw[-] (top)--node[fill=white, inner sep = 0.4mm]{\scriptsize$\bar{b}$}(e1)--node[fill=white, inner sep = 0.4mm]{\scriptsize$a_3$}(e2)--node[fill=white, inner sep = 0.4mm]{\scriptsize$a_4$}(e3)--node[fill=white, inner sep = 0.4mm]{\scriptsize$a_5$}(soc);
    
    \end{tikzpicture} \ &
    \begin{tikzpicture} [baseline=0mm]
    \coordinate (x) at (0.7,0) ;
    \coordinate (y) at (0,1.3) ;
    %%%%top
    \node (top) at ($0*(x)+2.2*(y)$) {$3$};
    %%%%%
    \node (q1)at ($-0.75*(x)+1.5*(y)$) {$5$};
    \node (q2)at ($-0.75*(x)+0.75*(y)$) {$4$};
    \node (q3)at ($-0.75*(x)+0*(y)$) {$5$};
    \node (q4)at ($-0.75*(x)+-0.75*(y)$) {$4$};
    \node (q5)at ($-0.75*(x)+-1.5*(y)$) {$1$};
    %%%%%
    \node (w1)at ($0.75*(x)+1*(y)$) {$1$};
    \node (w2)at ($0.75*(x)+-1*(y)$) {$2$};
    %%%%%
    \node (soc) at ($0*(x)+-2.2*(y)$) {$3$};
    \draw[-] (top)--node[fill=white, inner sep = 0.4mm]{\scriptsize$c$}(q1)--node[fill=white, inner sep = 0.4mm]{\scriptsize$\bar{e}$}(q2)--node[fill=white, inner sep = 0.4mm]{\scriptsize$\bar{d}$}(q3)--node[fill=white, inner sep = 0.4mm]{\scriptsize$e$}(q4)--node[fill=white, inner sep = 0.4mm]{\scriptsize$d$}(q5)--node[fill=white, inner sep = 0.4mm]{\scriptsize$a_1$}(soc);
    \draw[-] (top)--node[fill=white, inner sep = 0.4mm]{\scriptsize$\bar{c}$}(w1)--node[fill=white, inner sep = 0.4mm]{\scriptsize$a_2$}(w2)--node[fill=white, inner sep = 0.4mm]{\scriptsize$b$}(soc);
    \end{tikzpicture} \ &
    \begin{tikzpicture} [baseline=0mm]
    \coordinate (x) at (0.7,0) ;
    \coordinate (y) at (0,1.3) ;
    %%%%top
    \node (top) at ($0*(x)+2.2*(y)$) {$4$};
    %%%%%
    \node (q1)at ($-0.75*(x)+1.5*(y)$) {$1$};
    \node (q2)at ($-0.75*(x)+0.75*(y)$) {$3$};
    \node (q3)at ($-0.75*(x)+0*(y)$) {$5$};
    \node (q4)at ($-0.75*(x)+-0.75*(y)$) {$4$};
    \node (q5)at ($-0.75*(x)+-1.5*(y)$) {$5$};
    %%%%%
    \node (qq1)at ($0.75*(x)+1.5*(y)$) {$5$};
    \node (qq2)at ($0.75*(x)+0.75*(y)$) {$4$};
    \node (qq3)at ($0.75*(x)+0*(y)$) {$1$};
    \node (qq4)at ($0.75*(x)+-0.75*(y)$) {$3$};
    \node (qq5)at ($0.75*(x)+-1.5*(y)$) {$5$};
    %%%%%
    \node (soc) at ($0*(x)+-2.2*(y)$) {$3$};
    \draw[-] (top)--node[fill=white, inner sep = 0.4mm]{\scriptsize$d$}(q1)--node[fill=white, inner sep = 0.4mm]{\scriptsize$a_1$}(q2)--node[fill=white, inner sep = 0.4mm]{\scriptsize$c$}(q3)--node[fill=white, inner sep = 0.4mm]{\scriptsize$\bar{e}$}(q4)--node[fill=white, inner sep = 0.4mm]{\scriptsize$\bar{d}$}(q5)--node[fill=white, inner sep = 0.4mm]{\scriptsize$e$}(soc);
    \draw[-] (top)--node[fill=white, inner sep = 0.4mm]{\scriptsize$\bar{d}$}(qq1)--node[fill=white, inner sep = 0.4mm]{\scriptsize$e$}(qq2)--node[fill=white, inner sep = 0.4mm]{\scriptsize$d$}(qq3)--node[fill=white, inner sep = 0.4mm]{\scriptsize$a_1$}(qq4)--node[fill=white, inner sep = 0.4mm]{\scriptsize$c$}(qq5)--node[fill=white, inner sep = 0.4mm]{\scriptsize$\bar{e}$}(soc);
    \end{tikzpicture} \ &
    \begin{tikzpicture} [baseline=0mm]
    \coordinate (x) at (0.7,0) ;
    \coordinate (y) at (0,1.3) ;
    %%%%top
    \node (top) at ($0*(x)+2.2*(y)$) {$5$};
    %%%%%
    \node (q1)at ($-0.75*(x)+1.5*(y)$) {$4$};
    \node (q2)at ($-0.75*(x)+0.75*(y)$) {$1$};
    \node (q3)at ($-0.75*(x)+0*(y)$) {$3$};
    \node (q4)at ($-0.75*(x)+-0.75*(y)$) {$5$};
    \node (q5)at ($-0.75*(x)+-1.5*(y)$) {$4$};
    %%%%%
    \node (qq1)at ($0.75*(x)+1.5*(y)$) {$4$};
    \node (qq2)at ($0.75*(x)+0.75*(y)$) {$5$};
    \node (qq3)at ($0.75*(x)+0*(y)$) {$4$};
    \node (qq4)at ($0.75*(x)+-0.75*(y)$) {$1$};
    \node (qq5)at ($0.75*(x)+-1.5*(y)$) {$3$};
    %%%%%
    \node (soc) at ($0*(x)+-2.2*(y)$) {$5$};
    \draw[-] (top)--node[fill=white, inner sep = 0.4mm]{\scriptsize$e$}(q1)--node[fill=white, inner sep = 0.4mm]{\scriptsize$d$}(q2)--node[fill=white, inner sep = 0.4mm]{\scriptsize$a_1$}(q3)--node[fill=white, inner sep = 0.4mm]{\scriptsize$c$}(q4)--node[fill=white, inner sep = 0.4mm]{\scriptsize$\bar{e}$}(q5)--node[fill=white, inner sep = 0.4mm]{\scriptsize$\bar{d}$}(soc);
    \draw[-] (top)--node[fill=white, inner sep = 0.4mm]{\scriptsize$\bar{e}$}(qq1)--node[fill=white, inner sep = 0.4mm]{\scriptsize$\bar{d}$}(qq2)--node[fill=white, inner sep = 0.4mm]{\scriptsize$e$}(qq3)--node[fill=white, inner sep = 0.4mm]{\scriptsize$d$}(qq4)--node[fill=white, inner sep = 0.4mm]{\scriptsize$a_1$}(qq5)--node[fill=white, inner sep = 0.4mm]{\scriptsize$c$}(soc);
    \end{tikzpicture} \ &
    \begin{tikzpicture} [baseline=0mm]
    \coordinate (x) at (0.7,0) ;
    \coordinate (y) at (0,1.3) ;
    %%%%top
    \node (top) at ($0*(x)+2.2*(y)$) {$6$};
    %%%%%
    \node (q1)at ($0*(x)+1*(y)$) {$7$};
    \node (q2)at ($0*(x)+-1*(y)$) {$1$};
    %%%%%
    \node (soc) at ($0*(x)+-2.2*(y)$) {$6$};
    \draw[-] (top)--node[fill=white, inner sep = 0.4mm]{\scriptsize$f$}(q1)--node[fill=white, inner sep = 0.4mm]{\scriptsize$g$}(q2)--node[fill=white, inner sep = 0.4mm]{\scriptsize$a_7$}(soc);
    \end{tikzpicture} \ &
    \begin{tikzpicture} [baseline=0mm]
    \coordinate (x) at (0.7,0) ;
    \coordinate (y) at (0,1.3) ;
    %%%%top
    \node (top) at ($0*(x)+2.2*(y)$) {$7$};
    %%%%%
    \node (q1)at ($0*(x)+1*(y)$) {$1$};
    \node (q2)at ($0*(x)+-1*(y)$) {$6$};
    %%%%%
    \node (soc) at ($0*(x)+-2.2*(y)$) {$7$};
    \draw[-] (top)--node[fill=white, inner sep = 0.4mm]{\scriptsize$g$}(q1)--node[fill=white, inner sep = 0.4mm]{\scriptsize$a_7$}(q2)--node[fill=white, inner sep = 0.4mm]{\scriptsize$f$}(soc);
    \end{tikzpicture}
    \end{tabular}
\end{equation*}
\end{example}

The next example Example \ref{example:BCAtilting}(ii) shows that a tilting mutation of Brauer configuration algebras is not a Brauer configuration algebra in general. 
As a consequence, the class of Brauer configuration algebras is not closed under derived equivalence. 

\begin{example}\label{example:BCAtilting}
Let $A_{\Gamma}$ be a Brauer configuration algebra whose Brauer configuration $\Gamma$ is multiplicity-free and given by the following figure. 
\begin{equation*}    
\begin{tikzpicture}[baseline=0mm,scale = 0.8]
    \coordinate (0) at (360:1.5);
    \coordinate (1) at (60:1.5); 
    \coordinate (2) at (120:1.5);
    \coordinate (3) at (180:1.5); 
    \coordinate (4) at (240:1.5);
    \coordinate (5) at (300:1.5); 
    \coordinate (t1) at ($(0)!0.5!(2)$);
    \draw[line width =0mm, fill = white!20!lightgray] (0)--(2)--(4)--cycle;
    \draw[thick] (0)--(1) (2)--(3) (4)--(5); 
    
    \draw[thick] (0)--(2)--(4)--cycle;
    \node at (0) {$\bullet$};
    %\node at (0) [right] {$u_1$};  
    \node at (1) {$\bullet$};
    %\node at (1) [above] {$u_4$};  
    \node at (2) {$\bullet$};
    %\node at (2) [above] {$u_2$};  
    \node at (3) {$\bullet$};
    %\node at (3) [left] {$u_5$};  
    \node at (4) {$\bullet$};
    %\node at (4) [left] {$u_3$};  
    \node at (5) {$\bullet$};
    \node at (0,0) {$V_1$};
    \node at ($($(0)!0.5!(1)$)+(30:0.5)$) {$V_2$};
    \node at ($($(2)!0.5!(3)$)+(150:0.5)$) {$V_3$};
    \node at ($($(4)!0.5!(5)$)+(-90:0.5)$) {$V_4$};
\end{tikzpicture}
\end{equation*}
Then, it is isomorphic to the trivial extension of the path algebra of type $D_4$ with a minimal co-generator. 
For each $i\in \{1,2,3,4\}$, we compute tilting mutation of $A_{\Gamma}$ at the projective module $P_{V_i}$ as follows.  
\begin{enumerate}[\rm (i)]
    \item For $i=1$, we have  $\End(\mu^-_{P_{V_{1}}}(A_{\Gamma}))\cong A_{\Gamma}$ as $K$-algebras by a direct calculation. 
    This also follows from Theorem \ref{thm:main theorem} by using flip (see Section \ref{sec:main result} for the detail). 
    \item For $i \in \{2,3,4\}$, the endomorphism algebra 
    $\End(\mu_{P_{V_i}}^-(A_{\Gamma}))$ is isomorphic to a bound quiver algebra given by the following quiver with relations:  
\begin{equation*}
\begin{tikzpicture}
    \node at (0,0) {$\xymatrix@C=26pt@R=10pt{ 
&3\ar@{->}[rd]^{a_2}& \\ 
1 \ar@{->}[ru]^{a_1}\ar@{->}[rd]_{b_1} &&2 \ar@{->}[ll]_{c}\\ 
&4\ar@{->}[ru]_{b_2}&
}$};
\node at (2.5,0) {\text{and}};
\node at (5.5,0) {$
 \left< \begin{matrix} a_1a_2-b_1b_2, a_2cb_1, \\ b_2ca_1, ca_1a_2c \end{matrix} \right>.$};
\end{tikzpicture}
\end{equation*}
Since it is not multiserial, it can not be realized as a Brauer configuration algebra. 
\end{enumerate}
\end{example}

\section{Flip of Brauer configurations and main result}
\label{sec:main result}
An aim of this section is to introduce a combinatorial operation on Brauer configurations, called \emph{flip}, and prove our main result (Theorem \ref{thm:main theorem}). 

\subsection{Definition of flip}
\label{subsec:flip BC}
We will define flip of Brauer configurations under the following assumption on polygons.
Let $\Gamma = (H,\sigma,\psi,s,\m)$ be a Brauer configuration. 

\begin{definition}\label{def:conditionE}
We say that a polygon $V$ of $\Gamma$ satisfies the condition {\rm (E)} if, for every $e\in V$, the polygon $[\sigma^{-1}(e)]$ is an edge or $V$ itself. 
\end{definition}

We use the following notations. 
\begin{definition}
Let $V$ be a polygon of $\Gamma$. 
\begin{enumerate}[\rm (1)]
\item We decompose $V$ into a disjoint union of the following subsets: 
\begin{eqnarray*}
    \HH_1(V) &:=& \{e\in V \mid \text{$s^{-1}(s(e))\subset V$}\},\\ 
    \HH_2(V) &:=& \{e\in V \mid \text{$s^{-1}(s(e))\not\subset V$, $\sigma(e)\in V$}\}, \\
    \HH_3(V) &:=& \{e\in V \mid \text{$s^{-1}(s(e))\not\subset V$, $\sigma(e)\not\in V$}\}. 
\end{eqnarray*}
    \item For each $f\in H\setminus \HH_1(V)$, let 
\begin{equation*}
\p_V(f) := \sigma^{-c}(f) \quad \text{and} \quad 
\n_V(f) := \sigma^d(f), 
\end{equation*}
where 
$c:=\min\{i\geq 1 \mid \sigma^{-i}(f)\not\in V\}$ and
$d:=\min\{j \geq 1 \mid \sigma^{j}(f)\not\in V\}$. 
Thus, $\p_V(f),\n_V(f) \not\in V$. If $V$ is clear, we will omit it. 
    \item If $V$ satisfies the condition (E), then we set  
\begin{eqnarray*}
    \HH_4(V) &:=& \{f \in  H\setminus V \mid \text{$\exists \,e \in V$ s.t. $\n(f)=\overline{\sigma^{-1}(e)}$}\} \quad \text{and} \\
    \HH_5(V) &:=& \{f \in  H\setminus V \mid \text{$\forall\,e\in V, \n(f)\neq\overline{\sigma^{-1}(e)}$}\}. 
\end{eqnarray*}
We notice that, for any $e\in V$ such that $\sigma^{-1}(e)\not\in V$, 
the polygon $[\sigma^{-1}(e)]$ is an edge by the condition (E),  
so we write $[\sigma^{-1}(e)] = \{\sigma^{-1}(e),\overline{\sigma^{-1}(e)}\}$ (see Definition \ref{def:basicBC}(4)).
In addition, for each $f\in \HH_4(V)$, there exists a unique angle $e\in V$, which we will denote by $e=x_f$, satisfying $\n(f)= \overline{\sigma^{-1}(e)}$ in the definition of $\HH_4(V)$.

Thus, we obtain a decomposition $H = \bigsqcup_{i=1}^5\HH_i(V)$ associated to the polygon $V$ satisfying the condition (E). 
\end{enumerate}

\end{definition}

\begin{definition}\label{def:flip BC}
Let $\Gamma = (H,\sigma,\psi,s,\m)$ be a Brauer configuration. 
For a polygon $V$ satisfying the condition (E), we define a new Brauer configuration $\Gamma' = (H',\sigma',\psi',s',\m')$ in the following way. 
We set $H':=H$ and $\psi' := \psi$. 
In addition, we define an angle $\sigma'(h)$ for all $h\in H = \bigsqcup_{i=1}^5\HH_i(V)$ by rules in Table \ref{table:flip}. 
Notice that, by the condition (E), for any $e\in \HH_2(V)\sqcup \HH_3(V)$, the polygon $[\p(e)]$ is an edge, so we write $[\p(e)] = \{\p(e),\overline{\p(e)}\}$. 

\begin{table}[h]
\renewcommand{\arraystretch}{1.4}
\begin{tabular}{c||c|c|c|c|cccc}
{\rm Cases } & {\rm Angle $h$} & $\sigma'(h)$ & $s'(h)$\\ \hline \hline 
{\rm (1)} & $e\in \HH_1(V)$ & $\sigma(e)$ & $s(e)$ \\  \hline  
{\rm (2)} & $e\in \HH_2(V)$ & $\sigma(e)$ & $s(\overline{\p(e)})$ \\ 
{\rm (3)} & $e\in \HH_3(V)$ & $\overline{\p(e)}$ & $s(\overline{\p(e)})$\\ \hline 
{\rm (4)} & $f\in \HH_4(V)$ & $x_f$ & $s(f)$\\
{\rm (5)}  & $f\in \HH_5(V)$& $\n(f)$ & $s(f)$ \\ 
\end{tabular}
\vspace{2mm}
    \caption{The definition of $\sigma'(h)$.}
    \label{table:flip}
\end{table}

Then, $\sigma'$ gives rise to a permutation on $H'$. Furthermore, a canonical surjection 
$s'\colon H' \to H'/ \langle \sigma' \rangle$ induces a one-to-one correspondence $\gamma\colon H'/ \langle \sigma' \rangle \xrightarrow{\sim} H/ \langle \sigma\rangle$ by mapping $s'(h)$ to the corresponding vertex, say $\gamma(s'(h))$, of $\Gamma$ in the table.  
Then, we define a multiplicity function $\m':=\m\circ \gamma \colon H'/\langle \sigma'\rangle \to \mathbb{Z}_{>0}$ by a composition, that is, 
\begin{equation}\label{new m}
\m'(s'(h)) :=   
\begin{cases}
    \m(s(h)) & \text{($h\in \HH_1(V)$),} \\ 
    \m(s(\overline{\p(h)}) & \text{($h\in \HH_2(V)\sqcup \HH_3(V)$),} \\ 
    \m(s(h)) & \text{($h\in \HH_4(V)\sqcup \HH_5(V)$).}
\end{cases} 
\end{equation}

We call $\Gamma'$ a \emph{left flip} of $\Gamma$ at $V$ and write $\mu_V^-(\Gamma) = \Gamma'$.
Dually, we can define a \emph{right flip} $\mu_V^+(\Gamma)$  under the dual of the condition of (E) on $V$, i.e., for every $h\in V$ the polygon $[\sigma(h)]$ is an edge or $V$ itself. 
\end{definition}

\begin{example}  
Let $\Gamma$ be a Brauer configuration given in Example \ref{example:BC}. 
In this case, the polygon $U:= U_1$ satisfies the condition (E). 
Moreover, a left flip of $\Gamma$ at this polygon is given by the following figure. 
\vspace{-1.5cm}
\begin{equation*} 
\begin{tikzpicture}[baseline = 0cm]
\node at (0,0) {$
\begin{tikzpicture}[baseline=0mm, scale =1.05]
\coordinate (1) at (90-72:1.5);
\coordinate (2) at (90:1.5); 
\coordinate (3) at (90+72*1:1.5);
\coordinate (4) at (90+72*2:1.5); 
\coordinate (5) at (90+72*3:1.5);
\coordinate (3a) at ($(3)+(120:1.3)$);
\coordinate (3b) at ($(3)+(-120:1.3)$);
\draw[thick] (2)arc(90:90-72*2:1.5);
\fill[line width =0mm, fill = white!20!lightgray] (1)--(2)--(3)--(4)--(5)--cycle; 
\draw[thick] (1)--(2)--(3)--(4)--(5)--cycle; 

\draw[fill=white, line width =0mm, thick] (5)..controls($(5)+(72:1.3)$)and($(5)+(72+108-50:1.3)$)..(5);
\draw[fill=white, line width =0mm, thick] (5)..controls($(5)+(72+50:1.3)$)and($(5)+(72+108:1.3)$)..(5);

\draw[thick] (3)--(3a); 
\draw[thick] (3)--(3b);

\draw[white, line width = 1.5mm] (2)..controls($(2)+(90:2.5)$)and($(2)+(90+85:2.5)$)..(2);
\draw[thick] (2)..controls($(2)+(90:2.5)$)and($(2)+(90+85:2.5)$)..(2);
\draw[white, line width = 1.5mm] (2)..controls($(2)+(90-70:2.7)$)and($(2)+(90+45:2.7)$)..(2);
\draw[thick] (2)..controls($(2)+(90-70:2.7)$)and($(2)+(90+45:2.7)$)..(2);

\node at(1) {$\bullet$}; 
\node at(2) {$\bullet$}; 
\node at(3) {$\bullet$}; 
\node at(4) {$\bullet$}; 
\node at(5) {$\bullet$}; 
\node at(3a) {$\bullet$};
\node at(3b) {$\bullet$};
\node at (0,0) {$U$}; 
\node at (-10:2) {$U_2$}; 
\node at (47:1.85) {$U_3$}; 
\node at ($(124:2.2)+(180:0.1)$) {$U_4$}; 
\node at ($(62:2.2)+(0:0.1)$) {$U_5$}; 
\node at (-2.2,1) {$U_6$}; 
\node at (-2.2,0) {$U_7$}; 

\draw[fill = white] ($(2)+(-90:0.4)$)circle(1.8mm); 
\node at ($(2)+(-90:0.4)$) {\tiny$a_1$};
\draw[fill = white] ($(3)+(-20:0.4)$)circle(1.8mm); 
\node at ($(3)+(-20:0.4)$) {\tiny$a_7$};
\draw[fill = white] ($(4)+(60:0.4)$)circle(1.8mm); 
\node at ($(4)+(60:0.4)$) {\tiny$a_6$};
\draw[fill = white] ($(5)+(180:0.8)$)circle(1.8mm); 
\node at ($(5)+(180:0.8)$) {\tiny$a_5$};
\draw[fill = white] ($(5)+(126:0.9)$)circle(1.8mm); 
\node at ($(5)+(126:0.9)$) {\tiny$a_4$};
\draw[fill = white] ($(5)+(72:0.8)$)circle(1.8mm); 
\node at ($(5)+(72:0.8)$) {\tiny$a_3$};
\draw[fill = white] ($(1)+(200:0.4)$)circle(1.8mm); 
\node at ($(1)+(200:0.4)$) {\tiny$a_2$};

\draw[fill = white] ($(1)+(-80:0.5)$)circle(1.8mm); 
\node at ($(1)+(-80:0.5)$) {\tiny$b$};
\draw[fill = white] ($(5)+(40:0.5)$)circle(1.8mm); 
\node at ($(5)+(40:0.5)$) {\tiny$\bar{b}$};

\draw[fill = white] ($(2)+(-8:0.6)$)circle(1.8mm); 
\node at ($(2)+(-8:0.6)$) {\tiny$c$};
\draw[fill = white] ($(1)+(112:0.5)$)circle(1.8mm); 
\node at ($(1)+(112:0.5)$) {\tiny$\bar{c}$};

\draw[fill = white] ($(2)+(165:0.7)$)circle(1.8mm); 
\node at ($(2)+(165:0.7)$) {\tiny$d$};
\draw[fill = white] ($(2)+(95:0.7)$)circle(1.8mm); 
\node at ($(2)+(95:0.7)$) {\tiny$\bar{d}$};

\draw[fill = white] ($(2)+(132:0.75)$)circle(1.7mm); 
\node at ($(2)+(132:0.75)$) {\tiny$e$};
\draw[fill = white] ($(2)+(27:0.7)$)circle(1.7mm); 
\node at ($(2)+(27:0.7)$) {\tiny$\bar{e}$};

\draw[fill = white] ($(3)+(120:0.4)$)circle(1.6mm); 
\node at ($(3)+(120:0.4)$) {\tiny$f$};
\draw[fill = white] ($(3)+(120:1)$)circle(1.6mm); 
\node at ($(3)+(120:1)$) {\tiny$\bar{f}$};

\draw[fill = white] ($(3)+(-120:0.4)$)circle(1.6mm); 
\node at ($(3)+(-120:0.4)$) {\tiny$g$};
\draw[fill = white] ($(3)+(-120:0.95)$)circle(1.6mm); 
\node at ($(3)+(-120:0.95)$) {\tiny$\bar{g}$};

\node at ($(2) + (200:0.7)$) {\small$w_1$};
\node at (1) [right]{\small$w_2$};
\node at (5) [below]{\small$w_3$};
\node at (4) [below]{\small$w_4$};
\node at ($(3)+(180:0.2)$) [left]{\small$w_5$};
\node at (3a) [left]{\small$w_6$};
\node at (3b) [left]{\small$w_7$};
\end{tikzpicture}$};  
\node at (8,0) {$
\begin{tikzpicture}[scale = 1.05] 
\coordinate (1) at (90-72:1.5);
\coordinate (2) at (90:1.5); 
\coordinate (3) at (90+72*1:1.5);
\coordinate (4) at (90+72*2:1.5); 
\coordinate (5) at (90+72*3:1.5);
\coordinate (3a) at ($(3)+(120:1.3)$);
\coordinate (3b) at ($(3)+(-120:1.3)$);
\draw[thick] (2)arc(90:90-72*2:1.5);
\fill[line width =0mm, fill = white!20!lightgray] (1)--(2)..controls($(2)+(90+-50:1.5)$)and($(2)+(90+20:1.3)$)..($(2)+(170:0.8)$)..controls($(2)+(220:1.5)$)and($(3b)+(23:1.4)$)..(3b)--(4)..controls(-90:1)and(-30:1)..cycle; 
\draw[fill=white, line width =0mm, thick] (2)..controls($(2)+(90-40:1)$)and($(2)+(90+25:1)$)..($(2)+(180:0.4)$)..controls($(2)+(270+10:0.4)+(180:0.4)$)and($(2)+(180+80:0.3)$)..(2);
\draw[thick] (1)--(2)..controls($(2)+(90+-50:1.5)$)and($(2)+(90+20:1.3)$)..
($(2)+(170:0.8)$)..controls($(2)+(220:1.5)$)and($(3b)+(23:1.4)$)..(3b)--(4)..controls(-90:1)and(-30:1)..cycle;

\draw[fill=white, line width =0mm, thick] (1)..controls($(1)+(144:1.3)$)and($(1)+(180+10:1.3)$)..(1);
\draw[fill=white, line width =0mm, thick] (1)..controls($(1)+(180+5:1.3)$)and($(1)+(-108-10:1.3)$)..(1);
\draw[thick] (3)--(3a) (3)--(3b);

\draw[white, line width = 1.5mm] (2)..controls($(2)+(90:2.5)$)and($(2)+(90+85:2.5)$)..(2);
\draw[thick] (2)..controls($(2)+(90:2.5)$)and($(2)+(90+85:2.5)$)..(2);
\draw[white, line width = 1.5mm] (2)..controls($(2)+(90-70:2.7)$)and($(2)+(90+45:2.7)$)..(2);
\draw[thick] (2)..controls($(2)+(90-70:2.7)$)and($(2)+(90+45:2.7)$)..(2);

\node at(1) {$\bullet$}; 
\node at(2) {$\bullet$}; 
\node at(3) {$\bullet$}; 
\node at(4) {$\bullet$}; 
\node at(5) {$\bullet$}; 
\node at(3a) {$\bullet$};
\node at(3b) {$\bullet$};

\draw[fill = white] ($(2)+(60:0.7)$)circle(1.8mm); 
\node at ($(2)+(60:0.7)$) {\tiny$a_1$};
\draw[fill = white] ($(3b)+(0:0.5)$)circle(1.8mm); 
\node at ($(3b)+(0:0.5)$) {\tiny$a_7$};
\draw[fill = white] ($(4)+(90:0.4)$)circle(1.8mm); 
\node at ($(4)+(90:0.4)$) {\tiny$a_6$};
\draw[fill = white] ($(1)+(180+55:0.7)$)circle(1.8mm); 
\node at ($(1)+(180+55:0.7)$) {\tiny$a_5$};
\draw[fill = white] ($(1)+(126+63:0.9)$)circle(1.8mm); 
\node at ($(1)+(126+63:0.9)$) {\tiny$a_4$};
\draw[fill = white] ($(1)+(72+78:0.7)$)circle(1.8mm); 
\node at ($(1)+(72+78:0.7)$) {\tiny$a_3$};
\draw[fill = white] ($(2)+(-76:0.4)$)circle(1.8mm); 
\node at ($(2)+(-76:0.4)$) {\tiny$a_2$};

\draw[fill = white] ($(1)+(-80:0.5)$)circle(1.8mm); 
\node at ($(1)+(-80:0.5)$) {\tiny$b$};
\draw[fill = white] ($(5)+(40:0.5)$)circle(1.8mm); 
\node at ($(5)+(40:0.5)$) {\tiny$\bar{b}$};

\draw[fill = white] ($(2)+(-8:0.6)$)circle(1.8mm); 
\node at ($(2)+(-8:0.6)$) {\tiny$c$};
\draw[fill = white] ($(1)+(112:0.5)$)circle(1.8mm); 
\node at ($(1)+(112:0.5)$) {\tiny$\bar{c}$};

\draw[fill = white] ($(2)+(165:0.7)$)circle(1.8mm); 
\node at ($(2)+(165:0.7)$) {\tiny$d$};
\draw[fill = white] ($(2)+(95:0.7)$)circle(1.8mm); 
\node at ($(2)+(95:0.7)$) {\tiny$\bar{d}$};

\draw[fill = white] ($(2)+(132:0.75)$)circle(1.7mm); 
\node at ($(2)+(132:0.75)$) {\tiny$e$};
\draw[fill = white] ($(2)+(27:0.7)$)circle(1.7mm); 
\node at ($(2)+(27:0.7)$) {\tiny$\bar{e}$};

\draw[fill = white] ($(3)+(120:0.4)$)circle(1.6mm); 
\node at ($(3)+(120:0.4)$) {\tiny$f$};
\draw[fill = white] ($(3)+(120:1)$)circle(1.6mm); 
\node at ($(3)+(120:1)$) {\tiny$\bar{f}$};

\draw[fill = white] ($(3)+(-120:0.4)$)circle(1.6mm); 
\node at ($(3)+(-120:0.4)$) {\tiny$g$};
\draw[fill = white] ($(3)+(-120:0.95)$)circle(1.6mm); 
\node at ($(3)+(-120:0.95)$) {\tiny$\bar{g}$};

\node at (-0.25,0) {$U$}; 
\node at (-10:2) {$U_2$}; 
\node at (47:1.85) {$U_3$}; 
\node at ($(124:2.2)+(180:0.1)$) {$U_4$}; 
\node at ($(62:2.2)+(0:0.1)$) {$U_5$}; 
\node at (-2.2,1) {$U_6$}; 
\node at (-2.2,0) {$U_7$};

\node at ($(2) + (200:0.6)$) {\small$w_1'$};
\node at (1) [right]{\small$w_2'$};
\node at (5) [below]{\small$w_3'$};
\node at (4) [below]{\small$w_4'$};
\node at ($(3)+(180:0.2)$) [left]{\small$w_5'$};
\node at (3a) [left]{\small$w_6'$};
\node at (3b) [left]{\small$w_7'$};
\end{tikzpicture}$}; 
\draw[->, fill=white,line width = 1.5pt] (2.6,-1)--node[above]{\text{left flip at $U$}}(4.7,-1);
\end{tikzpicture}
\end{equation*}
In fact, we can check that 
$\mathsf{H}_1(U) = \{a_6\}$, 
$\mathsf{H}_2(U) = \{a_3,a_4\}$ and 
$\mathsf{H}_3(U) = \{a_1,a_2,a_5,a_7\}$. 
In addition, $\mathsf{H}_4(U) = \{\bar{c}, d, \bar{e},\bar{g}\}$ and 
$\mathsf{H}_5(U) = \{b, \bar{b}, c, \bar{d}, e, f,\bar{f}, g\}$, where we have $x_{\bar{c}}=a_3$, $x_d=a_2$, $x_{\bar{e}}= a_1$ and $x_{\bar{g}}=a_7$. 
Therefore, by Table \ref{table:flip}, 
a new permutation $\sigma'$ on $H$ is provided by 
\begin{gather*} 
    w_1' := (a_1,\bar{d},e,d,a_2,c,\bar{e}), \ 
    w_2' := (a_3,a_4,a_5, b,\bar{c}), \
    w_3' := (\bar{b}), \\ 
    w_4' := (a_6), \ 
    w_5' := (f,g), \ 
    w_6' := (\bar{f}), \ 
    w_7' := (a_7,\bar{g}). 
\end{gather*}
Since the induced one-to-one correspondence $\gamma$ of vertices is given by mapping $w_i\mapsto w_i'$ for $i\in \{1,\ldots,6\}$, 
the multiplicity of $w_i'$ is the same as that of $w_i$ for every $i$.  
\end{example}

\subsection{Main result}\label{subsec:main result}

The following is a main result in this paper.

\begin{theorem}\label{thm:main theorem}
Let $\Gamma$ be a Brauer configuration and $A_{\Gamma}$ the Brauer configuration algebra of $\Gamma$ over an algebraically closed field $K$. 
If a polygon $V$ of $\Gamma$ satisfies the condition {\rm (E)}, then there is an isomorphism of $K$-algebras
\begin{equation*}
    \End(\mu_{P_{V}}^{-}(A_{\Gamma}))\cong A_{\mu_V^-(\Gamma)}. 
\end{equation*}
In particular, it is again a Brauer configuration algebra. 
\end{theorem}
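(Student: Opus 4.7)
The plan is to model the two-term tilting complex $T := \mu^-_{P_V}(A_\Gamma)$ explicitly from the combinatorics of $\Gamma$ under condition (E), construct a $K$-algebra map $\Phi \colon A_{\mu_V^-(\Gamma)} \to \End(T)$ sending each arrow of $Q_{\mu_V^-(\Gamma)}$ to a specific chain map of projectives as dictated by Table~\ref{table:flip}, and then conclude by verifying the defining relations and comparing $K$-dimensions.

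First I would identify $T$. The arrows of $Q_\Gamma$ ending at $V$ are exactly $\sigma^{-1}(e) \colon [\sigma^{-1}(e)] \to V$ for $e \in V$, and by condition (E) every such source that differs from $V$ is an edge. Hence the minimal left $\add(A_\Gamma/P_V)$-approximation of $P_V$ is
\begin{equation*}
    f \colon P_V \longrightarrow Q' := \bigoplus_{\substack{e \in V \\ \sigma^{-1}(e) \notin V}} P_{[\sigma^{-1}(e)]},
\end{equation*}
whose component into $P_{[\sigma^{-1}(e)]}$ is left multiplication by the arrow $\sigma^{-1}(e)$, and $T = Y \oplus (A_\Gamma/P_V)$ where $Y$ is the complex $(P_V \xrightarrow{f} Q')$ placed in degrees $-1$ and $0$. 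The summand $Y$ plays the role of the mutated polygon $V$.

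Next I would build $\Phi$ on arrows of $Q_{\mu_V^-(\Gamma)}$ case by case. Cases (1) and (5) of Table~\ref{table:flip} correspond to arrows essentially unchanged by the flip, and $\Phi$ sends them to the obvious chain maps of projectives inherited from $A_\Gamma$. Cases (2) and (3) yield either endomorphisms of $Y$ or maps $Y \to P_{[\p(h)]}$, obtained by lifting left multiplication by $h$ on $P_V$ to a chain map of $Y$ using the differential $f$. Case (4) yields a chain map $P_{[h]} \to Y$ whose degree-zero component is the canonical path in $Q_\Gamma$ winding once around the vertex $s(h)$ from $[\sigma^{-1}(x_h)]$ to $[h]$, landing in the summand $P_{[\sigma^{-1}(x_h)]}$ of $Q'$. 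Lemma~\ref{lem:Hom basis} ensures each prescription yields a well-defined morphism in $\End(T)$, and the chain-map compatibility in each case is forced by the definition of $\sigma'$ in the table.

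Finally I would check that $\Phi$ respects the relations (BC1) and (BC2), and then deduce that $\Phi$ is an isomorphism via a dimension count. The crucial point is that for each vertex $u' \in H'/\langle \sigma'\rangle$, the new cycle $C_{h}^{\m'(u')}$ maps under $\Phi$ to a composition of chain maps which, after identifying $Y$ with $P_V$ in the derived category, reproduces the old cycle at the corresponding vertex $\gamma(u') \in H/\langle \sigma\rangle$ of $\Gamma$; the off-cycle quadratic relations (BC2) vanish because the underlying paths in $A_\Gamma$ are already zero. Bijectivity of $\Phi$ then follows from equality of $K$-dimensions: formula~\eqref{dim formula} reduces $\dim_K \End(T)$ to an explicit sum of $\dim_K \Hom(P_U, P_W)$ over pairs of polygons of $\Gamma$, each given by Lemma~\ref{lem:Hom basis}, and the resulting total matches the dimension of $A_{\mu_V^-(\Gamma)}$ read off from Table~\ref{table:flip} and~\eqref{new m}. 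The main technical obstacle will be the bookkeeping around vertices of types $\HH_2(V)$ and $\HH_3(V)$, where a cycle of $\mu_V^-(\Gamma)$ weaves together an internal arc inside $V$ with an external detour through the edge $[\p(h)]$, so that the chain-level composition has to be unwound carefully to match both the cycle relation and the dimension count.
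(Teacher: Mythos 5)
Your overall strategy is the same as the paper's (identify the mutated complex $T$, send arrows of $Q_{\mu_V^-(\Gamma)}$ to chain maps dictated by Table~\ref{table:flip}, verify (BC1)--(BC2), close with a dimension count via \eqref{dim formula}), so the framework is sound, but there is a concrete gap in your first step. You claim the minimal left $\add(A_\Gamma/P_V)$-approximation of $P_V$ is $P_V\to Q':=\bigoplus_{e\in V,\ \sigma^{-1}(e)\notin V} P_{[\sigma^{-1}(e)]}$ with each component the single arrow $\sigma^{-1}(e)$. Condition (E) does \emph{not} rule out $\sigma^{-1}(e)\in V$ for some $e$ with $s^{-1}(s(e))\not\subset V$ (it only requires $[\sigma^{-1}(e)]$ to be an edge or $V$ itself). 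In that case the radical map $\rho_{C_{\p(e),e}}\colon P_V\to P_{[\p(e)]}$, corresponding to a path that re-enters $V$ one or more times before finally leaving, does \emph{not} factor through your $Q'$, so $Q'$ fails to be a left approximation. The correct target (Proposition~\ref{prop:approximation of PV}) is $\bigoplus_{e\in\HH_2(V)\sqcup\HH_3(V)}P_{[\p(e)]}$ with component maps $\rho_{C_{\p(e),e}}$; summands can repeat, and the components are paths of length $\ge 1$, not single arrows. This already happens in the paper's running example (Example~\ref{example:BC}), where both $a_3$ and $a_4$ contribute a copy of $P_{U_2}$ to $T_{U_1}^0$. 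Since your model of $T$ is wrong, the subsequent chain maps, relation checks, and dimension bookkeeping inherit the error.

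A second missing ingredient: on arrows $\sigma'^0(e)$ with $e\in\HH_1(V)$ the "obvious" chain map $(\rho_{C_{e,\sigma(e)}},0)$ is not the right choice. The paper rescales by a constant $\zeta_{s(e)}$ satisfying $\zeta_{s(e)}^{\m(s(e))\val(s(e))}=-1$, precisely so that $\varphi(C_h'^{\m'(s'(h))})=(-\rho_{C_h}^{\m(s(h))},0)$ for $h\in\HH_1(V)$ agrees, up to homotopy, with the images computed for $h\in\HH_2(V)\sqcup\HH_3(V)$. Without this normalization the candidate map $\varphi$ does not annihilate the (BC1) relations of $I_{\Gamma'}$, and the induced map $\overline\varphi$ would not exist. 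Your sketch asserts the cycle relations "reproduce the old cycle at the corresponding vertex," but it glosses over the sign discrepancy that forces this rescaling.
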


When $\Gamma$ is a Brauer graph, the condition (E) is automatically satisfied for all edges, and our flip coincides with that of Brauer graphs. Thus, \cite[Theorem 1.3]{Aihara15} is a special case of our result. 

In the rest, we prove Theorem \ref{thm:main theorem}. 
Keep the notation in the statement, 
we recall that the Brauer configuration algebra of $\Gamma':=\mu_{P_V}^-(\Gamma)$ is defined to be a factor algebra 
$$A_{\Gamma'} := KQ_{\Gamma'}/I_{\Gamma'}.$$ 

Our strategy of a proof of Theorem \ref{thm:main theorem} is the following. 
\begin{itemize}
    \item In Subsection \ref{subsub:construction phi}, we will construct a homomorphism $\varphi \colon KQ_{\Gamma'} \to \End(T)$ of $K$-algebras, 
    where $T\cong \mu_{P_V}^-(A_{\Gamma})$. 
    \item In Subsection \ref{subsub:the induced map}, we show that it factors through the natural surjection $\pi\colon KQ_{\Gamma'}\to A_{\Gamma'}$ and induces the homomorphism $\bar{\varphi}\colon A_{\Gamma'}\to \End(T)$ such that $\varphi = \bar{\varphi}\circ \pi$. 
    \[
    \xymatrix{ KQ_{\Gamma'} \ar[rr]^{\pi} \ar[rd]_{\varphi} \ar@{}[rrd]|{\circlearrowright} & & A_{\Gamma'} \ar@{.>}[ld]^{\overline{\varphi}} \\
    & \End(T) &  }
    \]
    \item In Subsection \ref{subsub:isomorphism}, we prove that 
    $\bar{\varphi}$ is an isomorphism. It completes a proof. 
\end{itemize}

\subsubsection{A construction of $\varphi$}\label{subsub:construction phi}

Let $\Gamma:=(H,\sigma,\psi,s,\mathfrak{m})$ be a Brauer configuration and $A_{\Gamma}=KQ_{\Gamma}/I_{\Gamma}$ the Brauer configuration algebra corresponding to $\Gamma$. 
%We denote by $P_U$ the indecomposable projective $A_{\Gamma}$-module corresponding to the polygon $U$ of $\Gamma$. 
We first compute a left mutation of $A_{\Gamma}$ with respect to $P_V$. 

\begin{proposition}\label{prop:approximation of PV}
Let $V$ be a polygon of $\Gamma$ (not necessarily satisfying the condition (E)). 
Then, a left mutation $\mu^-_{P_V}(A_{\Gamma})$ is isomorphic to a direct sum $T = \bigoplus_{U\in H/\psi} T_U$, where $T_U=P_U$ for all $U\neq V$, and $T_V$ is a two-term complex $T_V=(T_V^{-1}\overset{d_V}{\longrightarrow} T_V^0)$ given by
\begin{equation*}
    T_V^{0} := \bigoplus_{e\in \HH_{2}(V)\sqcup \HH_{3}(V)} P_{[\p(e)]}, 
    \quad 
    T_V^{-1} := P_V \quad \text{and} \quad d_V := \sum_{e\in \HH_{2}(V)\sqcup \HH_{3}(V)} \iota_{e} \rho_{C_{\p(e),e}}. 
\end{equation*}
In the above, $\iota_e \colon P_{[\p(e)]} \to T_V^{0}$ is a canonical inclusion and $\rho_{C_{\p(e),e}}\colon P_V = P_{[e]} \to P_{[\p(e)]}$ is the homomorphism induced by the path $C_{\p(e),e}$ of the quiver $Q_{\Gamma}$ for each $e\in \HH_{2}(V)\sqcup \HH_{3}(V)$.
\end{proposition}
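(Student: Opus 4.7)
The plan is to verify that the morphism $d_V \colon P_V \to T_V^0$ defined in the statement is a left minimal $(\add Q)$-approximation of $P_V$ in $\Kb(\proj A_\Gamma)$, where $Q = \bigoplus_{U \neq V} P_U$. Once this is established, the definition-proposition of left mutation will give $\mu^-_{P_V}(A_\Gamma) \cong Y \oplus Q$ with $Y$ the cone of $d_V$; this cone is precisely the two-term complex $T_V = [P_V \xrightarrow{d_V} T_V^0]$ with $P_V$ placed in degree $-1$.

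To check the approximation property, I will take a basis element $\rho_\alpha \in \Hom(P_V, P_U)$ for $U \neq V$ as in Lemma \ref{lem:Hom basis}, where $\alpha = C_h^r C_{h,f}$ with $h \in U$, $f \in V$, and $f = \sigma^t(h)$. Since $h \in U \neq V$ forces $h \notin V$ while $f \in V$, the angle $f$ satisfies $s^{-1}(s(f)) \not\subset V$, placing $f \in \HH_2(V) \sqcup \HH_3(V)$ so that $\p(f)$ is defined. Letting $t' < t$ be the largest index with $\sigma^{t'}(h) \notin V$, which exists since $\sigma^0(h) = h \notin V$, I identify $\p(f) = \sigma^{t'}(h)$. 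The path then splits as $C_{h,f} = C_{h,\p(f)} \cdot C_{\p(f),f}$, yielding
\[
    \rho_\alpha = \rho_{C_h^r C_{h,\p(f)}} \circ \rho_{C_{\p(f),f}},
\]
a factorization of $\rho_\alpha$ through the $f$-indexed summand of $d_V$; extending linearly over the basis shows that $d_V$ is a left $(\add Q)$-approximation.

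The main obstacle will be the verification of left minimality, i.e., showing that no indecomposable summand of $T_V^0$ can be dropped. The idea is to exploit the observation, again from Lemma \ref{lem:Hom basis}, that each basis element of $\Hom(P_V, P_W)$ is labeled by the angle of $V$ at which its underlying path terminates—equivalently, by the path's final arrow. If the $e$-component $\rho_{C_{\p(e),e}}$ could be written as $\sum_{e' \neq e} \phi_{e'} \circ \rho_{C_{\p(e'),e'}}$ for suitable $\phi_{e'}$, then, translating via multiplication in $A_\Gamma$, the right-hand side would be a linear combination of basis elements whose final arrows lie in $\{\sigma^{-1}(e') : e' \neq e\}$, none of which equals $\sigma^{-1}(e)$; linear independence of the basis then yields a contradiction. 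This forces $d_V$ to be left minimal, and the proposition follows from the definition-proposition of left mutation.
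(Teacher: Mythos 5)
Your proof is correct and follows the same route as the paper's: both reduce the proposition to showing that $d_V$ is a left minimal $(\add\bigoplus_{U\neq V}P_U)$-approximation of $P_V$, using Lemma \ref{lem:Hom basis} to factor each basis morphism $\rho_{C_h^rC_{h,f}}\colon P_V\to P_U$ through $\rho_{C_{\p(f),f}}$ (the paper states this in one sentence, you spell out the splitting $C_{h,f}=C_{h,\p(f)}\cdot C_{\p(f),f}$ with $\p(f)=\sigma^{t'}(h)$). Your minimality argument via the distinct terminal arrows $\sigma^{-1}(e)$ of the basis paths is exactly the content of the paper's remark that minimality is clear ``from this description,'' just made explicit.
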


\begin{proof}
We need to show that the map $d_V$ is a left minimal $(\add \bigoplus_{U\neq V}P_U)$-approximation of $P_V$. 
For each $U\neq V$, the space $\Hom(P_V,P_U)$ is generated by paths in $\mathcal{C}(U,V)$ by Lemma \ref{lem:Hom basis}. 
In particular, every non-zero path from $U$ to $V$ factors through at least one paths of the form $C_{\p(e),e}$ with $e\in \HH_{2}(V)\sqcup \HH_{3}(V)$. 
This shows that $d_V$ gives a left $(\add \bigoplus_{U\neq V}P_U)$-approximation of $P_V$. 
Furthermore, it is left minimal from this description. 
Thus, we get the assertion. 
\end{proof}

From now on, we fix a polygon $V$ of $\Gamma$ and assume that it satisfies the condition (E). 
We have a decomposition $H=\bigsqcup_{i=1}^5\HH_i(V)$ associated to $V$.
In the rest, let $T = \bigoplus_{U\in H/\psi} T_U$ be a two-term complex given in Proposition \ref{prop:approximation of PV} with respect to $V$, which is isomorphic to a left mutation $\mu_{P_V}^-(A_{\Gamma})$. 
In addition, let $\Gamma':= \mu^-_{V}(\Gamma) = (H',\sigma',\psi',s',\m')$ be a left flip of $\Gamma$ at $V$ 
and $A_{\Gamma'} = KQ_{\Gamma'}/I_{\Gamma'}$ the Brauer configuration algebra of $\Gamma'$. 
By definition, we have $H'=H$ and $\psi' = \psi$. 
The set of vertices of $Q_{\Gamma'}$ is the set $H'/\psi' = H/\psi$ of polygons, 
and the set of arrows of $Q_{\Gamma'}$ is given by 
the set $\{\sigma'^0(h)\colon [h] \to[\sigma'(h)] \mid h\in H'\}$. 

We show the next claim. 

\begin{proposition}\label{construction of varphi}
    We have a homomorphism $\varphi \colon KQ_{\Gamma'} \to \End(T)$ of $K$-algebras. 
\end{proposition}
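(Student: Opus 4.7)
The plan is to exploit the fact that $KQ_{\Gamma'}$ is a path algebra with no imposed relations, so a $K$-algebra homomorphism out of $KQ_{\Gamma'}$ is determined by its values on the vertex idempotents and on the arrows, subject only to source/target compatibility with the idempotents. I would first send each vertex idempotent $e_{[h]}$ of $Q_{\Gamma'}$ to the canonical projection $\pi_{[h]} \in \End(T)$ onto the direct summand $T_{[h]}$; these form a complete set of orthogonal idempotents summing to $1_{\End(T)}$ because $T = \bigoplus_{[h] \in H/\psi} T_{[h]}$.

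Next, for each angle $h \in H' = H$, corresponding to the arrow $\sigma'^{0}(h) \colon [h] \to [\sigma'(h)]$ of $Q_{\Gamma'}$, I would define an element $\varphi(\sigma'^{0}(h)) \in \Hom_{\Kb(\proj A_{\Gamma})}(T_{[h]}, T_{[\sigma'(h)]})$ case by case according to the five rows of Table \ref{table:flip}. In Case (5), where $h \in \HH_5(V)$, both $T_{[h]}$ and $T_{[\sigma'(h)]}$ are ordinary indecomposable projective $A_{\Gamma}$-modules, and I would take the morphism induced (via Lemma \ref{lem:Hom basis}) by a path in $Q_{\Gamma}$ of the form $C_{\n(h),h}$, which traverses the $V$-angles around $s(h)$ cyclically from $\n(h)$ back to $h$. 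In Cases (3) and (4), exactly one of source or target equals $T_V$, and the condition (E) guarantees that the relevant neighbouring polygon is a genuine edge $\{\sigma^{-1}(e), \overline{\sigma^{-1}(e)}\}$; I would then build a chain map concentrated in degree $0$, using the canonical inclusion or projection of a distinguished summand $P_{[\p(e)]}$ of $T_V^{0}$ composed with the morphism of indecomposable projectives coming from the edge structure (in Case (4) the choice is encoded precisely by the unique angle $x_h$ of the definition of $\HH_4(V)$). In Cases (1) and (2), where both source and target are $T_V$, I would specify both the degree $-1$ and degree $0$ components of a chain map $T_V \to T_V$, the degree $-1$ component being the endomorphism of $P_V = T_V^{-1}$ induced by the arrow $\sigma^{0}(h)$ of $Q_{\Gamma}$, and the degree $0$ component being a suitable permutation of summands of $T_V^0 = \bigoplus_{e\in \HH_2(V) \sqcup \HH_3(V)} P_{[\p(e)]}$ combined with appropriate path multiplications, designed so that the square with $d_V = \sum_{e\in \HH_2(V) \sqcup \HH_3(V)} \iota_e \rho_{C_{\p(e),e}}$ commutes.

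Each $\varphi(\sigma'^{0}(h))$ constructed in this way lies in $\pi_{[\sigma'(h)]} \End(T) \pi_{[h]}$ by design, so source/target compatibility with the idempotents is automatic. Since $KQ_{\Gamma'}$ is freely generated as a $K$-algebra by the vertex idempotents and the arrows (without any relations), these data extend uniquely to a $K$-algebra homomorphism $\varphi \colon KQ_{\Gamma'} \to \End(T)$, which is what we need.

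The main obstacle I anticipate is making the assignments in Cases (1) and (2) concrete, since the induced chain maps $T_V \to T_V$ must be specified in two degrees and verified to commute strictly with $d_V$; this requires careful bookkeeping of how each arrow of $Q_{\Gamma'}$ interacts with the decomposition of $T_V^0$ through the flip combinatorics recorded in Table \ref{table:flip}. The choices made here must also be principled enough that the subsequent verifications—that $\varphi$ factors through $A_{\Gamma'}$ (next subsection) and that the induced $\bar\varphi$ is an isomorphism—go through with the dimension counts provided by formula \eqref{dim formula} and Lemma \ref{lem:Hom basis}.
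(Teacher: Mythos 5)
Your overall framework matches the paper's: exploit freeness of $KQ_{\Gamma'}$, send the vertex idempotents to the summand projections of $T=\bigoplus_{U}T_U$, and send each arrow $\sigma'^0(h)$ to a chain map between the relevant summands. Your descriptions for Cases (3), (4), (5) are essentially what the paper does. But the substance of the proposition is left undone: the chain maps for Cases (1) and (2) --- the only genuinely two-term-to-two-term cases, hence the only cases where commutativity with $d_V$ must actually be checked --- are merely described as something you ``would specify,'' with the verification flagged as an ``anticipated obstacle.'' That verification is the entire mathematical content here; without it one has a template, not a proof.

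Worse, the shape you propose for the degree-$0$ component (``a suitable permutation of summands of $T_V^0$ combined with appropriate path multiplications'') is not what works. For Case (1), where $e\in\HH_1(V)$ forces $s^{-1}(s(e))\subset V$, the correct degree-$0$ component is simply $0$: one has $d_V\circ\rho_{C_{e,\sigma(e)}}=\sum_{h'\in\HH_2(V)\sqcup\HH_3(V)}\iota_{h'}\,\rho_{C_{\p(h'),h'}}\rho_{C_{e,\sigma(e)}}$, and each summand is a composite passing consecutively through arrows at the distinct vertices $s(h')$ and $s(e)$, hence zero by (BC2). This needs the condition-(E) and (BC2) analysis your sketch does not contain. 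For Case (2) the degree-$0$ component is the bare summand transposition $\eta_{e,\sigma(e)}=\iota_e\circ 1\circ\pi_{\sigma(e)}$ (legitimate because $\p(e)=\p(\sigma(e))$ when $e\in\HH_2(V)$), with no path multiplication at all. Finally, the paper twists the degree-$(-1)$ component in Case (1) by a scalar $\zeta_{s(e)}$ satisfying $\zeta_{s(e)}^{\m(s(e))\val(s(e))}=-1$; for this proposition alone the scalar is optional, but without it the later step (Proposition \ref{prop:image of socle}, which needs $\varphi(C_h'^{\m'(s'(h))})=(-\rho_{C_h}^{\m(s(h))},\,0)$ for $h\in\HH_1(V)$) will fail, so a ``design'' of $\varphi$ that ignores it cannot be carried through your announced strategy.
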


\begin{proof}
To construct a homomorphism from the path algebra $KQ_{\Gamma'}$ to the endomorphism algebra $\End(T)$, it suffices to associate each arrow of $Q_{\Gamma'}$ to a homomorphism between indecomposable direct summands of $T=\bigoplus_{U\in H/\psi} T_U$. 

The set of arrows of $Q_{\Gamma'}$ bijectively corresponds to the set $H'=H$. 
For each angle $h\in H = \bigsqcup_{i=1}^{5} \HH_i(V)$, 
we define a map
$$\varphi_1(\sigma'^0(h)) \colon T_{[\sigma'(h)]}\to T_{[h]}
$$ 
by rules in Table \ref{table:hom}, where $\sigma'(h)$ is the angle defined by Table \ref{table:flip}.
Here, for $e\in \HH_2(V)\sqcup \HH_3(V)$, let $\iota_e \colon P_{[\p(e)]} \to T_V^0$ and $\pi_e \colon T_V^0 \to P_{[\p(e)]}$ be a canonical inclusion and canonical projection respectively. 
In addition, for two $e,e'\in \HH_2(V)\sqcup \HH_3(V)$ such that $\p(e)=\p(e')$, 
let $\eta_{e,e'}\colon T_V^0\to T_V^0$ be a composition 
\begin{equation*}
    \eta_{e,e'} \colon T_V^0 \xrightarrow{\pi_{e'}} P_{[\p(e')]} \xrightarrow{1}  P_{[\p(e)]} \xrightarrow{\iota_{e}} T_V^0.
\end{equation*}
Finally, let $\zeta_{s(e)}\in K$ be an element $z$ satisfying the equation $z^{\m(s(e))\val(s(e)))} = -1$ for every $e\in V$ (It exists since $K$ is an algebraically closed field). 

\begin{table}[h]
\renewcommand{\arraystretch}{1.4}
    \begin{tabular}{c||c|c|c|c|ll}
    \renewcommand{\arraystretch}{1.4}
       {\rm Cases} & {\rm Angle $h$} & $\sigma'(h)$ & $T_{[\sigma'(h)]}$ &  $T_{[h]}$ & $\varphi_1(\sigma'^0(h))$ \\\hline\hline
        (1) & $e\in \HH_1(V)$ & $\sigma(e)$ & $T_V$ & $T_V$ & 
        $(\zeta_{s(e)} \rho_{C_{e,\sigma(e)}},\ 0)$ \\ \hline 
        (2) & $e\in \HH_2(V)$ & $\sigma(e)$ & $T_V$ & $T_V$ & 
        $(\rho_{C_{e,\sigma(e)}},\ \eta_{e,\sigma(e)})$ \\        
        (3) & $e\in \HH_3(V)$ & $\overline{\p(e)}$ & $P_{[\p(e)]}$ & $T_{V}$ & $(0,\ \iota_e)$ \\ \hline 
        (4)  & $f\in \HH_4(V)$ & $x_f$ & $T_V$ & $P_{[f]}$ & $(0,\ \rho_{C_{f,\n(f)}} \pi_{x_f})$ \\
        (5) & $f\in \HH_5(V)$ & $\n(f)$ & $P_{[\n(f)]}$ & $P_{[f]}$ & $(0,\ \rho_{C_{f,\n(f)}})$\\
    \end{tabular}\vspace{2mm}
    \caption{The definition of $\varphi'_1(\sigma'^0(h))$}
    \label{table:hom}
\end{table}

We show that all maps in Table \ref{table:hom} are homomorphisms in the homotopy category. 
The map of each case is given by the following diagram respectively. 
\begin{equation*}
    \begin{tabular}{cccccc}
    (1) & (2) & (3) & (4)& (5) \\ 
$\xymatrix@C=27pt@R=32pt{
P_V \ar@{->}[r]^{d_V} \ar@{->}[d]|{\zeta_{s(e)}\rho_{C_{e,\sigma(e)}}} %\ar@{}[rd]|{\circlearrowright} 
& T^0_V \ar@{->}[d]|0 \\ 
P_V \ar@{->}[r]^{d_V} & T^0_V
}$     
&$\xymatrix@C=27pt@R=32pt{
P_V \ar@{->}[r]^{d_V} \ar@{->}[d]|{\rho_{C_{e,\sigma(e)}}} 
%\ar@{}[rd]|{\circlearrowright} 
& T^0_V \ar@{->}[d]|{\eta_{e,\sigma(e)}} \\ 
P_V \ar@{->}[r]^{d_V} & T^0_V
}$ &
$\xymatrix@C=27pt@R=32pt{
0 \ar@{->}[r]^{0} \ar@{->}[d]|{0} 
%\ar@{}[rd]|{\circlearrowright}
&   P_{[\p(e)]}\ar@{->}[d]|{\iota_e} \\ 
P_V \ar@{->}[r]^{d_V} &  T^0_V
}$ &
$\xymatrix@C=27pt@R=32pt{
P_V \ar@{->}[r]^{d_V} \ar@{->}[d]|{0} 
%\ar@{}[rd]|{\circlearrowright}
& T_{V}^0 \ar@{->}[d]|{\rho_{C_{f,\n(f)}}\pi_{x_f}} \\ 
0 \ar@{->}[r]^{d_V} & P_{[f]}
}$ & 
$\xymatrix@C=27pt@R=32pt{
0 \ar@{->}[r]^{0} \ar@{->}[d]|{0} 
%\ar@{}[rd]|{\circlearrowright}
& P_{[\n(f)]} \ar@{->}[d]|{\rho_{C_{f,\n(f)}}} \\ 
0 \ar@{->}[r]^{0} & P_{[f]}
}$ 
\end{tabular}
\end{equation*}
The assertion is clear for (3) and (5). 
For the case (1), the assertion is immediate from (BC2). 
The commutativity for (2) and (4) follow from 
$$
d_V\circ \rho_{C_{e,\sigma(e)}}= \sum_{h\in \HH_2(V)\sqcup \HH_3(V)}\iota_h \rho_{C_{\p(h),h}}\rho_{C_{e,\sigma(e)}} 
\overset{\rm (BC2)}{=} 
\iota_e \rho_{C_{\p(e),\sigma(e)}}
=\eta_{e,\sigma(e)}\circ d_V 
$$ 
and 
$$
\rho_{C_{f,\n(f)}} \pi_{x_f} \circ d_V = 
\sum_{h\in \HH_2(V)\sqcup \HH_3(V)} \rho_{C_{f,\n(f)}} \pi_{x_f} \iota_h \rho_{C_{\p(h),h}} = \rho_{C_{f,\n(f)}} \rho_{C_{\p(x_f),x_f}} \overset{\rm (BC2)}{=} 0
$$
respectively.

As a consequence, a collection 
$\{\varphi_1(\sigma'^0(h)) \mid h\in H'\}$ of homomorphisms uniquely gives rise to a homomorphism $\varphi \colon KQ_{\Gamma'} \to \End(T)$ of $K$-algebras as desired.
\end{proof}

\subsubsection{The induced map $\bar{\varphi}$}\label{subsub:the induced map}

Next, we prove the following claim. 

\begin{proposition}\label{prop:induced by varphi} 
The map $\varphi$ in Proposition \ref{construction of varphi} factors through the natural surjection $\pi\colon KQ_{\Gamma'}\to A_{\Gamma'}$, giving $\overline{\varphi} \colon A_{\Gamma'}\to \End(T)$ such that $\varphi = \overline{\varphi} \circ \pi$.
\end{proposition}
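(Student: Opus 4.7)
The plan is to verify that $\varphi$ annihilates the defining ideal $I_{\Gamma'}$; equivalently, that the images $\varphi(\sigma'^0(h))\in\End(T)$ satisfy all the (BC1) and (BC2) relations read off from $\Gamma'$. Once this is established, the universal property of the surjection $KQ_{\Gamma'}\twoheadrightarrow A_{\Gamma'}$ produces the required $\overline{\varphi}$ uniquely.

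For the (BC2) relations of $\Gamma'$: a composable length-two path in $Q_{\Gamma'}$ has the form $\sigma'^0(g)\cdot\sigma'^0(h)$ with $[g]=[\sigma'(h)]$, and it is a (BC2) relation precisely when $g\neq\sigma'(h)$. I would check that $\varphi(\sigma'^0(g))\circ\varphi(\sigma'^0(h))=0$ by a case analysis on which of the five subsets $\HH_1(V),\ldots,\HH_5(V)$ contain $h$ and $g$. In each entry of Table~\ref{table:hom} the ``nonzero part'' of $\varphi(\sigma'^0(\cdot))$ is supported on one specific direct summand of $T_V^0$ via an inclusion $\iota_e$, or reads off one specific summand via a projection $\pi_{e'}$. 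The constraint $g\neq\sigma'(h)$ either produces a mismatch between the incoming and outgoing summands (killing the composition directly), or it translates via Table~\ref{table:flip} into the statement that the resulting path composition $\rho_\alpha\rho_\beta$ in $A_\Gamma$ is itself a (BC2) relation of $\Gamma$, and hence vanishes.

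For the (BC1) relations of $\Gamma'$: fix a polygon $W$ of $\Gamma'$ and two angles $h,f\in W$. One must show $\varphi(C_h'^{\m'(s'(h))})=\varphi(C_f'^{\m'(s'(f))})$ in $\End(T)$. Using Table~\ref{table:flip}, each cycle $C_h'$ at a vertex $w'=s'(h)$ of $\Gamma'$ corresponds, under the bijection $\gamma$, to a traversal of the $\sigma$-orbit at $w=\gamma(w')$ in $\Gamma$, possibly interleaved with inclusions $\iota_e$ and projections $\pi_{x_g}$ whenever the traversal enters or leaves $V$. Applying $\varphi$ and composing, these inclusions and projections telescope because of the way each $x_f$ is paired with $f\in\HH_4(V)$, and the result is (up to a scalar) the endomorphism $\rho_{C_h^{\m(s(h))}}$ on the relevant summand of $T$; this equals $\rho_{C_f^{\m(s(f))}}$ by (BC1) of $\Gamma$.

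The decisive step is the \emph{interior} case $h\in\HH_1(V)$, where every arrow of $C_h'$ is of type~(1) in Table~\ref{table:hom} and therefore contributes the scalar $\zeta_{s(h)}$ to the $T_V^{-1}$-component of the induced self-map of $T_V$; after $\val(s(h))\cdot\m(s(h))$ steps the accumulated factor is exactly $\zeta_{s(h)}^{\val(s(h))\m(s(h))}=-1$. This minus sign is precisely what is needed to ensure that the resulting self-map of $T_V$ commutes with the differential $d_V$, so that it represents a well-defined endomorphism in $\End(T)$ which agrees, via (BC1) of $\Gamma$ together with the definition of $\m'$, with the contributions coming from any other angle $f\in W$. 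Verifying this sign bookkeeping is the main technical hurdle, and it is also the reason the scalars $\zeta_{s(e)}$ (and the hypothesis that $K$ be algebraically closed) were introduced in the construction of $\varphi$.
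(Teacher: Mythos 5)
Your overall strategy is the same as the paper's: show $\varphi(I_{\Gamma'})=0$ by checking each (BC1) and (BC2) relation for $\Gamma'$, and your treatment of (BC2) is essentially what the paper does. But in the (BC1) part there are two intertwined problems.

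First, your explanation of the role of $\zeta_{s(h)}$ is incorrect. A scalar multiple of a chain map is again a chain map, so whether $(\zeta_{s(e)}\rho_{C_{e,\sigma(e)}},\,0)$ commutes with $d_V$ has nothing to do with the value of $\zeta_{s(e)}$; as the paper notes, that commutativity is immediate from (BC2), with or without the scalar. The purpose of the choice $\zeta_{s(h)}^{\m(s(h))\val(s(h))}=-1$ is different: it makes the socle element $\varphi(C_h'^{\m'(s'(h))})$ computed from an angle $h\in\HH_1(V)$ equal, in the homotopy category, to the one computed from an angle $f\in\HH_2(V)\sqcup\HH_3(V)$. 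Without the $-1$, the two computations would disagree by a sign, and (BC1) for the polygon $V$ would fail.

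Second, and more seriously, you assert that after "telescoping" the result for any $h\in V$ is "(up to a scalar) the endomorphism $\rho_{C_h^{\m(s(h))}}$." This glosses over the actual difficulty. For $h\in\HH_2(V)\sqcup\HH_3(V)$, Lemma~\ref{lem:image of cycles} gives
\[
\varphi(C_h'^{\m'(s'(h))}) = \bigl(0,\ \iota_h\,\rho_{C_{\p(h)}}^{\m(s(\p(h)))}\,\pi_h\bigr),
\]
which is a chain map supported only in degree $0$, whereas the $\HH_1(V)$ computation yields $(-\rho_{C_h}^{\m(s(h))},\,0)$, supported only in degree $-1$. These are \emph{not} equal as chain maps; they coincide only modulo homotopy. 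Establishing this requires producing an explicit null-homotopy $\kappa=\rho_{C_{h,\p(h)}}\pi_h$ for the chain map $(\rho_{C_h},\,\iota_h\rho_{C_{\p(h)}}\pi_h)$, which is the content of Proposition~\ref{prop:image of socle} and is exactly where the sign $-1$ enters. This step is the technical heart of the verification, and your proposal does not address it.
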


To prove this, we need a few preparation. 
Recall from Definition \ref{def:BC} that, for two $h,f\in H$, if $f = \sigma'^m(h)$ for some $1\leq m \leq \val(s'(h))$, we have a path 
\begin{equation}\label{special cycle}
    C'_{h,f} \colon [h] \xrightarrow{\sigma'^0(h)} [\sigma'(h)] \xrightarrow{\sigma'(h)} [\sigma'^2(h)] \longrightarrow \cdots \longrightarrow 
    [\sigma'^{m-1}(h)] \xrightarrow{\sigma'^{m-1}(h)} [\sigma'^{m}(h)= f] 
\end{equation}
in the quiver $Q_{\Gamma'}$, where $\val(s'(h))$ is the valency of the vertex $s'(h)$ of $\Gamma'$. 
In particular, we have a cycle $C'_h:= C'_{h,h}$ in $Q_{\Gamma'}$. 
We compute the images of these cycles under the map $\varphi$.

\begin{lemma}\label{lem:image of cycles}
For each $h\in H$, the image of $C'_{h}$ under $\varphi$ is given by  
\begin{equation*}
    \varphi({C}'_{h}) = 
    \begin{cases}
        (\zeta_{s(h)}^{\val(s(h))} \rho_{C_{h}},\ 0) & \text{\rm ($h\in \HH_1(V)$),} \\ 
        (0,\ \iota_h \rho_{C_{\overline{\p(h)}}} \pi_{h}) & \text{\rm ($h\in \HH_2(V)\sqcup \HH_3(V)$),}\\ 
        (0,\ \rho_{C_{h}}) & \text{\rm ($h\in \HH_4(V)\sqcup \HH_5(V)$).} 
    \end{cases}
\end{equation*}
\end{lemma}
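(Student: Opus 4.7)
The plan is to trace the $\sigma'$-orbit of $h$ and compose the arrow images from Table~\ref{table:hom}, using $\eta_{a,b}=\iota_a\pi_b$, $\pi_a\iota_a=\mathrm{id}$, and the path concatenation $C_{a,b}\cdot C_{b,c}=C_{a,c}$ along a $\sigma$-orbit.

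For $h\in\HH_1(V)$, the assumption $s^{-1}(s(h))\subset V$ implies the $\sigma$-orbit at $s(h)$ lies entirely in $\HH_1(V)$ and coincides with the $\sigma'$-orbit at $s'(h)=s(h)$. All arrows are case~(1); composing the first components along the orbit concatenates the paths $C_{\sigma^k(h),\sigma^{k+1}(h)}$ into $C_h$ and accumulates the scalars into $\zeta_{s(h)}^{\val(s(h))}$, while the second components vanish.

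For $h\notin \HH_1(V)$, by inspecting Table~\ref{table:flip} under condition~(E), the $\sigma'$-orbit of $h$ lies at $u:=s'(h)$ of $\Gamma$ and decomposes into alternating blocks: \emph{$V$-runs} (maximal $\sigma$-consecutive strings of $V$-angles, beginning in $\HH_2(V)\sqcup\HH_3(V)$ and ending in $\HH_3(V)$) and \emph{non-$V$-segments at $u$} (strings of non-$V$-angles at $u$ linked by $\n$, beginning in $\HH_4(V)\sqcup\HH_5(V)$ and ending in $\HH_4(V)$); the transitions are $h'\mapsto \overline{\p(h')}$ at the end of a $V$-run and $f\mapsto x_f$ at the end of a non-$V$-segment. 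The first component of $\varphi(C'_h)$ is then $0$ because every such orbit contains an arrow of case (3), (4), or (5). For the second component, one computes block-wise: a $V$-run from $h^{(j)}$ to $h'^{(j)}\in\HH_3(V)$ collapses to $(0,\iota_{h^{(j)}})$ via telescoping of the interior $\eta$'s against the closing case-(3) $\iota_{h'^{(j)}}$; a non-$V$-segment from $g_0$ to $g_l\in\HH_4(V)$ contributes $(0,\rho_{C_{g_0,\n(g_l)}}\pi_{x_{g_l}})$ by path concatenation at $u$; and at each junction from a non-$V$-segment into the next $V$-run, $\pi_{x_{g_l}}\iota_{x_{g_l}}=\mathrm{id}$ merges the adjacent $\rho$'s into a longer path.

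Putting the blocks together: for $h\in\HH_2(V)\sqcup\HH_3(V)$, the orbit opens with a $V$-run carrying the leading $\iota_h$ and closes at a non-$V$-segment whose terminal $\HH_4$-angle $f^{(K)}$ satisfies $x_{f^{(K)}}=h_R$ (the start of $h$'s run in $\Gamma$); the residual $\pi_{h_R}$ telescopes with the final sub-run back to $h$ to yield $\pi_h$, and the merged $\rho$ becomes $\rho_{C_{\overline{\p(h)}}}$ because $\n(f^{(K)})=\overline{\sigma^{-1}(h_R)}=\overline{\p(h)}$, giving $(0,\iota_h\rho_{C_{\overline{\p(h)}}}\pi_h)$. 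For $h\in\HH_4(V)\sqcup\HH_5(V)$, both endpoints of the orbit lie in non-$V$-segments so all $\iota,\pi$ pairs cancel, and the fully concatenated $\rho$ is $\rho_{C_h}$. The principal obstacle is verifying the block structure and indexing consistency under condition~(E), particularly when $V$ or the edges $[\p(\cdot)]$ are self-folded, when several $V$-runs attach to $u$, or when $h$ sits mid-run in $\Gamma$; once the orbit is correctly enumerated, the $\eta$-telescoping and $\rho$-concatenation make the identifications routine.
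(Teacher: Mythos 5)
The proposal is correct and follows essentially the same strategy as the paper's proof: for $h\in\HH_1(V)$ the $\sigma'$-orbit coincides with the $\sigma$-orbit and only case-(1) arrows appear; for $h\notin\HH_1(V)$ one decomposes $C'_h$ into alternating blocks of $V$-angles (cases (2) ending with a case-(3) exit) and non-$V$-angles (cases (5) ending with a case-(4) re-entry), and computes the composite by telescoping $\eta$'s, cancelling $\pi\iota=\mathrm{id}$ at junctions, and concatenating the $\rho$'s. This matches the paper's explicit factorization $C'_h=C'_{h,e_1'}\alpha_1 C'_{f_1,f_1'}\beta_1\cdots\beta_tC'_{e_{t+1},h}$ and the ensuing block-wise evaluation.
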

\begin{proof}
We first consider an angle $h\in \HH_1(V)$. In this case, we have 
$\sigma'^m(h)\in \HH_1(V)$ for all $1\leq m \leq \ell$, 
where $\ell := \val(s(h))=\val(s'(h))$ by Table \ref{table:flip}.  
Thus, we have 
\begin{equation*}
    \varphi(C_h') = \varphi(\sigma'^0(h))\cdots \varphi(\sigma'^{\val(s(h))-1}(h)) = (\zeta^{\val(s(h))}_{s(h)} \rho_{C_{h}},\ 0) 
\end{equation*}
as desired. 

Next, we show the claim for an angle $h\in \HH_2(V)\sqcup \HH_3(V)$. 
To see this, we uniquely decompose the cycle $C'_{h}$ in $Q_{\Gamma'}$ as
\begin{equation*}\label{cycle decomposition}
    C'_{h} = C'_{h,e_1'}
    \alpha_1 C'_{f_1,f_1'} \beta_1 C'_{e_2,e_2'} 
    \cdots 
    \alpha_i C'_{f_{i},f_{i}'} \beta_i C'_{e_{i+1},e_{i+1}'}
    \cdots 
    \alpha_t C'_{f_{t},f_{t}'} \beta_{t} C'_{e_{t+1},h} 
\end{equation*}
with the following property: 
\begin{itemize}
    \item $e_i,e_i'\in V$ and $f_i,f_i'\not \in V$, where we set $e_{i+rt} = e_{i}$ and $f_{i+rt} = f_{i}$ for $r\in \mathbb{Z}$.
    \item $\alpha_i := \sigma'^0(e_{i}') \colon e_{i}' \to f_i = \sigma'(e_i')$ is an arrow of $Q_{\Gamma'}$. In this case, we have $e_{i}'\in \HH_3(V)$.
    \item $\beta_i := \sigma'^{0}(f_{i}') \colon f_{i}' \to e_{i+1} = \sigma'(f_i')$ is an arrow of $Q_{\Gamma'}$. In this case, we have $f_{i}'\in \HH_4(V)$ with $x_{f_{i}'}=e_{i+1}$. 
    \item every angle appearing in $C_{e_i,e_{i}'}$ lies in $\HH_2(V)$ except for $e_{i}'$. 
    \item every angle appearing in $C_{f_{i},f_{i}'}$ lies in $\HH_5(V)$ except for $f_{i}'$. 
\end{itemize}

In this situation, Table \ref{table:hom} gives
\begin{eqnarray*}
    \varphi(\alpha_i) = (0,\, \iota_{e_i'}), \, 
    \varphi(\beta_i) = 
    (0,\, \rho_{C_{f_i',f_{i+1}}}\pi_{e_{i+1}}), \, 
    \varphi(C'_{e_{i},e_{i}'}) = 
    (\rho_{C_{e_{i},e_{i}'}},\, \eta_{e_{i},e_{i}'}) 
    \, \text{and}
    \,
    \varphi(C'_{f_i,f_i'}) = (0,\, \rho_{C_{f_i,f_i'}}). 
\end{eqnarray*}
Using this, we compute  
\begin{eqnarray*}
    \varphi(C'_{e_{i}',e_{i+1}'}) &=& 
    \varphi(\alpha_i)
    \varphi(C'_{f_{i},f_{i}'})
    \varphi(\beta_i)
    \varphi(C'_{e_{i+1},e_{i+1}'}) = (0,\, \iota_{e_{i}'} \rho_{C_{f_{i},f_{i+1}}} 
    \pi_{e_{i+1}'}) \quad \text{and} \\ 
    \varphi(C'_{f_{i}',f_{i+1}'}) &=& 
    \varphi(\beta_{i})
    \varphi(C'_{e_{i+1},e_{i+1}'})
    \varphi(\alpha_{i+1})
    \varphi(C'_{f_{i+1},f_{i+1}'})
    =(0,\, \rho_{C_{f_i',f_{i+1}'}}). 
\end{eqnarray*}
Consequently, we obtain
\begin{equation*}
    \varphi(C_h') = (0,\ \eta_{h,e_{1}'} \iota_{e_{1}'}
    \rho_{C_{f_1,f_{t+1}}} \pi_{e_{t+1}}\eta_{e_{t+1},h}) =
    (0,\ \iota_{h}\rho_{C_{f_1}} \pi_{h}) = 
    (0,\ \iota_h\rho_{C_{\overline{\p(h)}}}\pi_h) 
\end{equation*}
as desired, where we use the fact that $f_{1}= \overline{\p(h)}$ holds in $\Gamma$ by our construction. 

By a similar discussion as the previous paragraph, one can show the statement for $h\in \HH_4(V)\sqcup \HH_5(V)$. 
\end{proof}

\begin{proposition}\label{prop:image of socle}
    Keep the notations in Lemma \ref{lem:image of cycles}, we have  
\begin{equation*}
    \varphi(C_h'^{\m'(s'(h))}) = 
    \begin{cases}
        (-\rho_{C_{h}}^{\m(s(h))},\ 0) & \text{\rm ($h\in V$),} \\ 
        (0,\ \rho_{C_{h}}^{\m(s(h))}) & \text{\rm ($h\in H\setminus V$).} 
    \end{cases}
\end{equation*}
\end{proposition}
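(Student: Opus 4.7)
The plan is to compute $\varphi(C_h'^{\m'(s'(h))})$ by splitting according to the decomposition $H=\bigsqcup_{i=1}^{5}\HH_{i}(V)$ and raising the expression for $\varphi(C'_{h})$ supplied by Lemma \ref{lem:image of cycles} to the $\m'(s'(h))$-th power in each case, where the exponent is read off from \eqref{new m}.

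For $h\in\HH_{1}(V)$, Lemma \ref{lem:image of cycles} and $\m'(s'(h))=\m(s(h))$ give $\varphi(C_h'^{\m'(s'(h))})=(\zeta_{s(h)}^{\val(s(h))\m(s(h))}\rho_{C_{h}}^{\m(s(h))},0)$, which equals $(-\rho_{C_{h}}^{\m(s(h))},0)$ by the defining relation $\zeta_{s(h)}^{\val(s(h))\m(s(h))}=-1$. For $h\in\HH_{4}(V)\sqcup\HH_{5}(V)$, the formulae $\varphi(C'_{h})=(0,\rho_{C_{h}})$ and $\m'(s'(h))=\m(s(h))$ immediately give $(0,\rho_{C_{h}}^{\m(s(h))})$. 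Both cases are essentially formal.

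The substantive case is $h\in\HH_{2}(V)\sqcup\HH_{3}(V)$. Starting from $\varphi(C'_{h})=(0,\iota_{h}\rho_{C_{\overline{\p(h)}}}\pi_{h})$ and exploiting $\pi_{h}\iota_{h}=1_{P_{[\p(h)]}}$, a short induction produces $(\iota_{h}\rho_{C_{\overline{\p(h)}}}\pi_{h})^{k}=\iota_{h}\rho_{C_{\overline{\p(h)}}}^{k}\pi_{h}$, so setting $k=\m'(s'(h))=\m(s(\overline{\p(h)}))$ yields the chain map $(0,\iota_{h}\rho_{C_{\overline{\p(h)}}}^{\m(s(\overline{\p(h)}))}\pi_{h})$. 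This is merely homotopic, not equal, to the target $(-\rho_{C_{h}}^{\m(s(h))},0)$, so the remaining task, which is the main obstacle, is to exhibit an explicit null-homotopy of the difference.

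The homotopy I plan to use is $\tau:=-\rho_{\gamma}\pi_{h}\colon T_{V}^{0}\to P_{V}$ with $\gamma:=C_{h}^{\m(s(h))-1}\cdot C_{h,\p(h)}$, i.e., the path in $Q_{\Gamma}$ that winds $\m(s(h))-1$ times around the vertex $s(h)$ starting at $h$ and then traces the arc from $h$ to $\p(h)$. The two homotopy equations then reduce to the factorization $C_{h}=C_{h,\p(h)}\cdot C_{\p(h),h}$ at $s(h)=s(\p(h))$, which supplies both $\gamma\cdot C_{\p(h),h}=C_{h}^{\m(s(h))}$ and $C_{\p(h),h}\cdot\gamma=C_{\p(h)}^{\m(s(h))}$; the identity $\pi_{h}\iota_{e}=\delta_{h,e}$, which collapses the sum defining $d_{V}$ to the single term with $e=h$ upon composition with $\pi_{h}$; and the relation (BC2), which annihilates $\rho_{C_{\p(e),e}}\rho_{\gamma}$ for $e\neq h$ since $\gamma$ starts at the specific angle $h$. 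Together these yield $\tau\circ d_{V}=-\rho_{C_{h}}^{\m(s(h))}$ and $d_{V}\circ\tau=-\iota_{h}\rho_{C_{\p(h)}^{\m(s(h))}}\pi_{h}$. To finish, the relation (BC1) applied to the edge $[\p(h)]=\{\p(h),\overline{\p(h)}\}$ together with $s(\p(h))=s(h)$ rewrites $C_{\p(h)}^{\m(s(h))}$ as $C_{\overline{\p(h)}}^{\m(s(\overline{\p(h)}))}$, exactly matching the required discrepancy. The most delicate step is the (BC2)-vanishing of the off-$h$ contributions in $d_{V}\circ\tau$, which depends sharply on $\gamma$ ending at the specific angle $\p(h)$.
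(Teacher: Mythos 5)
Your argument is correct and takes essentially the same route as the paper: the $\HH_1$ and $\HH_4\sqcup\HH_5$ cases read off directly from Lemma~\ref{lem:image of cycles}, and the $\HH_2\sqcup\HH_3$ case is settled by a null-homotopy built from the path from $h$ to $\p(h)$ — your $\gamma = C_h^{\m(s(h))-1}C_{h,\p(h)}$ (so $\tau = -\rho_{C_h}^{\m(s(h))-1}\kappa$) is just the $\m$-th-power version of the paper's homotopy $\kappa=\rho_{C_{h,\p(h)}}\pi_h$, whose nullity implies the power case anyway by iterating the chain map $(\rho_{C_h},\iota_h\rho_{C_{\p(h)}}\pi_h)$ in the homotopy category. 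One small slip to fix in your closing sentence: the (BC2)-vanishing of the $e\ne h$ terms in $d_V\circ\tau$ rests on $\gamma$ \emph{starting} with the arrow $\sigma^0(h)$ (as you correctly state a sentence earlier), not on $\gamma$ ending at $\p(h)$.
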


\begin{proof}
The multiplicity function $\m'$ is defined by \eqref{new m}. 
We compute the image by using Proposition \ref{lem:image of cycles}.
We recall that $V=\HH_1(V)\sqcup \HH_2(V) \sqcup \HH_3(V)$ and $H\setminus V = \HH_4(V) \sqcup \HH_5(V)$. 

If $h\in \HH_1(V)$, we get the assertion by 
\begin{equation*}
    \varphi({C}_{h}'^{\m'(s'(h))}) 
    \overset{\rm Lem\,\ref{lem:image of cycles}}{=} 
    (\zeta_{s(h)}^{\m(s(h))\val(s(h))}\rho_{C_{h}}^{\m(s(h))},\ 0) = 
    (-\rho_{C_{h}}^{\m(s(h))},\ 0)
\end{equation*}
where we use $\zeta_{s(h)}^{\m(s(h))\val(s(h))} = -1$ in the second equality. 

Next, if $h\in \HH_2(V)\sqcup \HH_3(V)$, we have 
\begin{equation*}
\varphi({C}_{h}'^{\m'(s'(h))}) 
\overset{\rm Lem.\,\ref{lem:image of cycles}}{=} (0,\ 
\iota_h \rho_{C_{\overline{\p(h)}}}^{\m(s(\overline{\p(h)})}
\pi_{h}) 
\overset{\rm (BC1)}{=} 
(0,\ \iota_h \rho_{C_{\p(h)}}^{\m(s(\p(h))}
\pi_{h}). 
\end{equation*}
Since $s(\p(h)) = s(h)$, it suffices to show that 
a homomorphism $(\rho_{C_h}, \iota_h \rho_{C_{\p(h)}} \pi_h)\in \Hom(T_V,T_V)$
is null-homotopic. 
In fact, there is a commutative diagram
\begin{equation*}
    \xymatrix@C=25pt@R=30pt{
        P_V \ar[rr]^{d_V} \ar[d]_{\rho_{C_h}} \ar@{}[rd]|{\circlearrowright} && T_V^0 \ar[d]^{\iota_h \rho_{C_{\p(h)}}\pi_h} \ar@{.>}[lld]_{\exists\,\kappa} \ar@{}[ld]|{\circlearrowright} \\ 
        P_V \ar[rr]^{d_V} && T_V^0
    }
\end{equation*}
by letting $\kappa := \rho_{C_{h,\p(h)}}\pi_h$. 
Thus, we get the desired equality in this case. 

Finally, the claims for $h\in \HH_4(V)\sqcup \HH_5(V)$ are obvious by Lemma \ref{lem:image of cycles}. 
\end{proof}
Now, we are ready to prove Proposition \ref{prop:induced by varphi}.

\begin{proof}[Proof of Proposition \ref{prop:induced by varphi}]
We show that $\varphi(I_{\Gamma'})=0$ for the two-sided ideal $I_{\Gamma'}$ in $KQ_{\Gamma'}$ defined by $\Gamma'$. 
We check each relation appearing in (BC1) and (BC2) for $\Gamma'$. 

\begin{itemize}
\item[(BC1)] 
Let $U$ be a polygon. 
For any $h,f\in U$, we have  
\begin{equation*}
    \varphi(C_h'^{\m'(s(h))} - C_f'^{\m'(s(f))}) = 
    \begin{cases}
    (-\rho_{C_{h}}^{\m(s(h))}+ \rho_{C_{f}}^{\m(s(f))},\ 0) & \text{if $U=V$,}\\ 
    (0,\ \rho_{C_h}^{\m(s(h))}-\rho_{C_f}^{\m(s(f))}) &\text{if $U\neq V$}
    \end{cases}
\end{equation*}
by Proposition \ref{prop:image of socle}. 
In all cases, it must be $0$ by (BC1) for $\Gamma$ as desired. 

\item[(BC2)] For (BC2), the assertion is obvious from the definition of $\varphi$ in Table \ref{table:hom}.
\end{itemize}
From the above argument, we get the claim. 
\end{proof}

\subsubsection{$\bar{\varphi}$ is an isomorphism}\label{subsub:isomorphism}
We show the next result, which immediately completes a proof of Theorem \ref{thm:main theorem}. 

\begin{theorem}\label{thm:varphi_isom}
    The map $\overline{\varphi}\colon A_{\Gamma'}\to \End(T)$ in Proposition \ref{prop:induced by varphi} is an isomorphism.
\end{theorem}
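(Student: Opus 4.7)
The plan is to establish the isomorphism block-by-block via the Peirce decomposition. Since $\bar{\varphi}$ sends every primitive idempotent $e_U$ to $\mathrm{id}_{T_U}$, it respects the decompositions
\[
A_{\Gamma'} = \bigoplus_{U,W} e_W A_{\Gamma'} e_U \quad \text{and} \quad \End(T) = \bigoplus_{U,W} \Hom(T_U, T_W),
\]
indexed by ordered pairs of polygons. It therefore suffices to prove that the restriction $\bar{\varphi}_{U,W}\colon e_W A_{\Gamma'} e_U \to \Hom(T_U, T_W)$ is an isomorphism for every pair $(U, W)$.

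On the domain, Lemma \ref{lem:Hom basis} applied to $\Gamma'$ furnishes an explicit $K$-basis of $e_W A_{\Gamma'} e_U$ consisting of paths $\rho_\alpha$ for $\alpha \in \mathcal{C}'(W, U)$, together with the idempotent $e_U$ when $U = W$. The image of each such $\alpha$ under $\bar{\varphi}$ will be computed by concatenating the arrow images in Table \ref{table:hom}, using Lemma \ref{lem:image of cycles} to handle intermediate cycles and Proposition \ref{prop:image of socle} for maximal-power cycles. On the codomain, I split into three cases according to whether $V \in \{U, W\}$: if $U, W \neq V$, then $T_U = P_U$, $T_W = P_W$, and Lemma \ref{lem:Hom basis} for $\Gamma$ gives a basis of $\Hom(P_U, P_W)$; if exactly one of $U, W$ equals $V$, the Hom-space consists of homotopy classes of chain maps between the two-term complex $T_V$ and a stalk complex, which I enumerate directly; and if $U = W = V$, the dimension formula \eqref{dim formula} gives
\[
\dim \Hom(T_V, T_V) = (T_V^0, T_V^0) - 2(T_V^{-1}, T_V^0) + (T_V^{-1}, T_V^{-1}).
\]

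In each case, a combinatorial count over the partition $H = \bigsqcup_{i=1}^5 \HH_i(V)$ (accounting for how cycles in $Q_{\Gamma'}$ based at a polygon of $\Gamma'$ correspond, after passing through edges of the form $\overline{\p(e)}$, to routed compositions in $Q_\Gamma$) shows that $\dim \Hom(T_U, T_W)$ equals $\#\mathcal{C}'(W, U) + \delta_{U,W}$, which is precisely $\dim e_W A_{\Gamma'} e_U$. Moreover the images $\bar{\varphi}(\rho_\alpha)$ are linearly independent, because distinct $\alpha$ either produce maps into different idempotent components $\iota_{e}(\cdot)\pi_{e'}$ of $T_V^0$, or record distinct paths in $A_\Gamma$ whose linear independence is guaranteed by Lemma \ref{lem:Hom basis} for $\Gamma$. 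Combined with the dimension match, this proves $\bar{\varphi}_{U,W}$ is an isomorphism.

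The main technical obstacle is the diagonal block $U = W = V$. Here the dimension formula produces the Hom-space as a sum of three pieces and one must reconcile the two descriptions of the socle in Proposition \ref{prop:image of socle}: for $h \in \HH_1(V)$ the image is $(-\rho_{C_h}^{\m(s(h))}, 0)$, whereas for $h \in \HH_2(V) \sqcup \HH_3(V)$ the image $(0,\ \iota_h \rho_{C_{\overline{\p(h)}}}^{\m} \pi_h)$ is forced (via the explicit null-homotopy $\kappa$ constructed in the proof of Proposition \ref{prop:image of socle}) to equal $(-\rho_{C_h}^{\m(s(h))}, 0)$ in the homotopy category. These coincidences correspond exactly to the (BC1) relations on the polygon $V$ of $\Gamma'$, so $\bar{\varphi}_{V,V}$ remains injective on the socle. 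Once this bookkeeping is made precise --- in particular, verifying that the ``$-1$'' arising from the choice of $\zeta_{s(e)}$ with $\zeta^{\m(s(e))\val(s(e))} = -1$ is precisely what makes the (BC1) identifications on both sides compatible --- the dimension formula \eqref{dim formula} collapses to the combinatorial count from $\mathcal{C}'(V, V)$, completing the argument.
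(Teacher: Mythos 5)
Your overall strategy (dimension count plus a direct injectivity argument) is sound in spirit and the dimension-counting portion you sketch is essentially what the paper does in Proposition \ref{prop:dim_equals}. However, the injectivity step is where your proposal diverges from the paper, and this is where it has a real gap.

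The paper never attempts a block-by-block linear-independence check. Instead, it observes that $\ker\overline{\varphi}$ is a two-sided ideal of the symmetric algebra $A_{\Gamma'}$, that any nonzero ideal of a finite-dimensional self-injective algebra must meet the socle nontrivially, and that Proposition \ref{prop:image of socle} already shows $\varphi(C_h'^{\m'(s'(h))}) \neq 0$ for every $h$. Hence $\ker\overline{\varphi} \cap \soc(A_{\Gamma'}) = 0$, so $\ker\overline{\varphi} = 0$, and the global dimension equality from Proposition \ref{prop:dim_equals} finishes the proof. This is a three-line argument once the socle computation is in place.

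Your version, by contrast, claims that the images $\overline{\varphi}(\rho_\alpha)$ for $\alpha \in \mathcal{C}'(W,U)$ are linearly independent ``because distinct $\alpha$ either produce maps into different idempotent components of $T_V^0$, or record distinct paths in $A_\Gamma$.'' This does not hold up in the blocks involving $T_V$: the ambient Hom-space is a homotopy Hom, so linear independence of the underlying chain maps (as paths in $A_\Gamma$, justified by Lemma \ref{lem:Hom basis}) does not imply linear independence in $\Hom_{\Kb}(T_U, T_V)$ or $\Hom_{\Kb}(T_V, T_V)$, where nontrivial null-homotopies exist (explicitly, $d_V \circ \Hom(P_U, P_V)$ and the $\kappa$ you mention). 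You recognize this difficulty in the $U = W = V$ case and gesture toward the fact that coincidences under homotopy match the (BC1) relations, but you never actually verify that no unexpected linear relation appears, and doing so carefully would require essentially reproving the dimension formula \eqref{dim formula} by hand. The paper's socle argument sidesteps all of this: you do not need to exhibit a basis of the image, only that the kernel misses a one-dimensional slice of each indecomposable projective. I would recommend replacing the block-by-block linear-independence claim with the socle argument, at which point your plan collapses to the paper's proof.
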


To prove this, we compare their dimension as $K$-vector spaces. 
Now, we decompose $T=T_V \oplus X$ and $A_{\Gamma'} = P_V' \oplus Y$.
That is, 
$X = \bigoplus_{U\neq V} P_U$ and  
$Y = \bigoplus_{U\neq V} P_U'$. 
We use the following description as matrix algebras. 

\begin{eqnarray*}
    \End(T) \cong 
    \begin{bmatrix}
    \End(T_V) & \Hom(X, T_V) \\ 
        \Hom(T_V, X) & 
        \End(X) 
    \end{bmatrix} 
    \quad \text{and} \quad 
    A_{\Gamma'}\cong \begin{bmatrix}
        \End(P_V') & \Hom(Y,P_V') \\ 
        \Hom(P_V', Y) & 
        \End(Y) 
    \end{bmatrix}. 
\end{eqnarray*}

\begin{proposition}\label{prop:dim_equals}
    The following statements hold. 
    \begin{enumerate}[\rm (1)]
        \item $(T_V, T_V) = (P_V',P_V').$ 
        \item $(T_V,X) = (P_V',Y).$
        \item $(X,T_V) = (Y,P_V').$
        \item $(X,X) = (Y,Y).$
    \end{enumerate}
    Consequently, $\dim_K\End(T) = \dim_K A_{\Gamma'}$. 
\end{proposition}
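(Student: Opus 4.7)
The plan is to prove each of the four equalities separately, in every case by applying Lemma~\ref{lem:Hom basis} to convert the dimensions into cardinalities of sets of paths, and then matching these cardinalities via the flip rules of Table~\ref{table:flip}.

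For the left-hand sides involving $T_V = (P_V \xrightarrow{d_V} T_V^0)$, the first step is to invoke the dimension formula~\eqref{dim formula} for two-term pretilting complexes (applicable since $T = T_V \oplus X$ is tilting by Proposition~\ref{prop:approximation of PV} together with symmetry of $A_{\Gamma}$). This yields
\[
(T_V,T_V) = (T_V^0,T_V^0) - 2(P_V,T_V^0) + (P_V,P_V),\quad
(T_V,X) = (T_V^0,X) - 2(P_V,X),
\]
\[
(X,T_V) = (X,T_V^0),\quad (X,X) = \dim_K\End(X),
\]
all expressed through $\Hom$-spaces between projective $A_{\Gamma}$-modules. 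Lemma~\ref{lem:Hom basis} then rewrites each such dimension as $\delta_{UW} + \#\mathcal{C}(W,U)$ for the corresponding pair of polygons of $\Gamma$. Likewise, the right-hand sides $(P_V',P_V')$, $(P_V',Y)$, $(Y,P_V')$, $(Y,Y)$ become sums of terms $\delta_{UW} + \#\mathcal{C}'(W,U)$ for polygons of $\Gamma'$. Each of (1)--(4) thereby reduces to a purely combinatorial identity between path counts in $\Gamma$ and in $\Gamma'$.

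The required bijections are read off from Table~\ref{table:flip}: the vertices of $\Gamma'$ correspond to those of $\Gamma$ via $\gamma$, and the cyclic ordering at a vertex of $\Gamma'$ differs from that at the corresponding vertex of $\Gamma$ only by local operations near $V$---either leaving a cycle unchanged (case $\HH_1$), inserting a segment of angles of $V$ between two adjacent angles (cases $\HH_4/\HH_5$), or cutting out a segment of angles of $V$ and splicing through the edge partners $\overline{\p(e)}$ (cases $\HH_2/\HH_3$). Condition~(E) is precisely what guarantees that these "partner" edges $[\p(e)]$ and $[\sigma^{-1}(e)]$ exist, and the rule~\eqref{new m} ensures that the product $\val\cdot\m$ is preserved under $\gamma$, so total cycle lengths at corresponding vertices agree. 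Paths in $\mathcal{C}'(W,U)$ can then be matched with walks in $\Gamma$ by "unfolding" each passage through a spliced edge.

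The most delicate case is (1), where one must establish
\[
\#\mathcal{C}'(V,V) + 1 = (T_V^0,T_V^0) - 2(P_V,T_V^0) + (P_V,P_V);
\]
a generic path in $\mathcal{C}'(V,V)$ starting at an angle $e\in V$ decomposes into an alternating concatenation of cycle segments of $\Gamma$ punctuated by crossings through edges $\{\p(e'),\overline{\p(e')}\}$ for $e'\in\HH_2(V)\sqcup\HH_3(V)$, and the correction terms $-2(P_V,T_V^0)$ and $+(P_V,P_V)$ must be shown to exactly compensate for the overcounting of such concatenations in $(T_V^0,T_V^0)$. The hard part will be organizing this bookkeeping so that every concatenation is counted with the correct sign. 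Once (1) is in hand, the remaining cases (2)--(4) follow by analogous but simpler accounting: in each of them only one endpoint (or neither) of the path lies in $V$, so the combinatorics involves no nontrivial "return" contributions. Summing the four identities yields $\dim_K\End(T) = \dim_K A_{\Gamma'}$.
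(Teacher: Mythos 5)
Your plan and the paper's proof diverge after the initial reduction. The paper does not attempt any bijection between paths: it proves a closed-form dimension lemma (Lemma~\ref{lem:dim proj}) expressing $(P_U,P_W)$ as $\sum_v \m(v)\occ(v,U)\occ(v,W)$ (plus a correction $2-\#U$ on the diagonal), introduces the multiplicity $\chi(U)$ of $P_U$ in $T_V^0$, and proves the single key combinatorial identity (Lemma~\ref{lem:occ})
\[
\sum_{U\in H/\psi}\chi(U)\occ(v,U)=\occ(v,V)+\occ'(v,V),
\]
after which each of (1)--(4) is a short algebraic computation. Your proposal instead asks for an explicit bijection between walks in $\Gamma'$ and concatenations of walks in $\Gamma$; this is genuinely different, but the essential content --- the ``alternating concatenation'' bookkeeping and its matching with the correction terms $-2(P_V,T_V^0)+(P_V,P_V)$ --- is exactly what is left undone, and you acknowledge it yourself. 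As it stands this is a plan, not a proof of (1).

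There is also a concrete error in your auxiliary formulas. Applying \eqref{dim formula} literally, you write $(T_V,X)=(T_V^0,X)-2(P_V,X)$ and $(X,T_V)=(X,T_V^0)$. These are inconsistent with each other under \eqref{Serre duality}, since $(T_V,X)=(X,T_V)$ yet your right-hand sides differ by $2(P_V,X)\neq 0$ in general. The correct Euler-characteristic identity for a two-term pretilting pair is
\[
(T,U)=(T^0,U^0)-(T^{-1},U^0)-(T^0,U^{-1})+(T^{-1},U^{-1}),
\]
which specializes to $(T_V,X)=(T_V^0,X)-(P_V,X)$ and $(X,T_V)=(X,T_V^0)-(X,P_V)$; the version with the coefficient $2$ is valid only in the diagonal case $T=U$ (where $(T^0,U^{-1})=(T^{-1},U^0)$ by symmetry), and this is the only case in which the paper invokes it in that form. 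Any bijective argument built on your asymmetric expressions would be off by an entire copy of $(P_V,X)$.

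Finally, a smaller imprecision: $\dim\Hom(P_U,P_U)$ is not $1+\#\mathcal{C}(U,U)$; the socle cycles $C_h^{\m(s(h))}$, $h\in U$, give the same morphism by (BC1), so one must subtract $\#U-1$ as in \eqref{dim projV}. A bijection-of-paths argument for (1) has to take this collapse on both sides into account, and it is one more place where the paper's numerical approach sidesteps a pitfall that the bijective approach would have to handle explicitly.
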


After proving Proposition \ref{prop:dim_equals}, we get Theorem \ref{thm:varphi_isom}. 

\begin{proof}[Proof of Theorem \ref{thm:varphi_isom}]
By definition, the socle of $A_{\Gamma'}$, regarded as a right $A_{\Gamma'}$-module, has a basis 
$\{C_{h}'^{\mathfrak{m}'(s'(h))} \mid h\in H'\}$ modulo $I_{\Gamma'}$. 
In Proposition \ref{prop:image of socle}, we have already seen that,
$\varphi(C_{h}'^{\mathfrak{m}'(s'(h))})\neq 0$ for all $h\in H'$. This shows that $\overline{\varphi}$ is injective. 
Then, it is automatically surjective by Proposition \ref{prop:dim_equals}. Thus, it is an isomorphism.  
\end{proof}

In the rest, we prove Proposition \ref{prop:dim_equals}. 
We describe $K$-dimension of the spaces of homomorphisms in terms of combinatorics on Brauer configurations. 

\begin{lemma}\label{lem:dim proj} 
    For two distinct polygons $U\neq W$ of $\Gamma$, we have 
    \begin{equation}\label{dim projUW}
        (P_U,P_W) = \displaystyle
        \sum_{v\in H/\langle\sigma\rangle} \mathfrak{m}(v) \occ(v,U)\occ(v,W). 
    \end{equation}
    On the other hand, we have 
    \begin{equation}\label{dim projV}
        (P_U,P_U) = 2-\#U + 
        \sum_{v\in H/\langle\sigma\rangle} \mathfrak{m}(v) \occ(v,U)^2. 
    \end{equation}
In particular, if $U$ is an edge, then $(P_U,P_U)$ is given by the same formula as \eqref{dim projUW} by letting $W=U$.
\end{lemma}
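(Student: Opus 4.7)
The plan is to apply Lemma~\ref{lem:Hom basis} directly: I would count how many elements of $\mathcal{C}(W,U)$ give distinct homomorphisms modulo the relations defining $I_\Gamma$, then add $1$ for the identity when $U=W$.

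First, I would count $\#\mathcal{C}(W,U)$ naively. Every path $C_h^r C_{h,f}$ satisfies $s(h)=s(f)$ since $f=\sigma^t(h)$, so I group the paths by the common vertex $v := s(h) = s(f)$. For fixed $v$, there are $\occ(v,W)$ choices of $h \in W$ with $s(h)=v$; the set $\{\sigma^t(h) : 1 \leq t \leq \val(v)\}$ traverses the full $\sigma$-orbit of $h$, i.e.\ $s^{-1}(v)$, so there are $\occ(v,U)$ choices of $f \in U$, and for each such pair the exponent $t$ is uniquely determined; finally $r$ ranges over $\m(v)$ values. Summing over $v$ gives the naive count $\sum_v \m(v)\,\occ(v,W)\,\occ(v,U)$.

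Next, I would determine which paths in $\mathcal{C}(W,U)$ are identified in $A_\Gamma$. Relation (BC2) only annihilates length-two paths that violate the cyclic order around a vertex, and no such sub-path appears inside any $C_h^r C_{h,f}$ by construction; so each individual path is non-zero. Relation (BC1) identifies socle elements: $C_h^{\m(s(h))} = C_{h'}^{\m(s(h'))}$ for $h,h'$ in the same polygon. A path $C_h^r C_{h,f} \in \mathcal{C}(W,U)$ is such a socle element only when $r = \m(s(h))-1$ and $t = \val(s(h))$, which forces $f = h \in U \cap W$, hence $U = W$.

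From here the two cases split. For $U \neq W$, no socle element lies in $\mathcal{C}(W,U)$ and no identifications occur, so $(P_U,P_W) = \#\mathcal{C}(W,U) = \sum_v \m(v)\,\occ(v,W)\,\occ(v,U)$, proving \eqref{dim projUW}. For $U = W$, exactly $\#U$ paths in $\mathcal{C}(U,U)$ (one for each $h \in U$) represent the single socle element of $\End(P_U)$, so the naive count overcounts by $\#U - 1$; adding $1$ for the non-radical identity map yields
\[
(P_U,P_U) \;=\; 1 + \sum_v \m(v)\,\occ(v,U)^2 - (\#U - 1) \;=\; 2 - \#U + \sum_v \m(v)\,\occ(v,U)^2,
\]
proving \eqref{dim projV}. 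When $U$ is an edge, $\#U = 2$, so the correction $2 - \#U$ vanishes and the formula agrees with \eqref{dim projUW} applied to $W = U$. There is no serious obstacle; the only subtle point is correctly tracking the socle collapse from (BC1) in the self-Hom case, which is exactly what produces the anomalous term $2 - \#U$.
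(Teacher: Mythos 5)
Your proposal is correct and follows essentially the same route as the paper: both count $\#\mathcal{C}(W,U)$ (the paper indexes the sum over $h\in W$ first and then regroups by $v=s(h)$, you group by $v$ directly — an immaterial difference), invoke Lemma~\ref{lem:Hom basis} for linear independence, and then for $U=W$ add $1$ for the identity while subtracting $\#U-1$ for the socle collapse forced by (BC1). Your extra remark on why (BC2) contributes no further identifications is a useful expansion of what the paper leaves implicit.
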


\begin{proof}
Let $U\neq W$ be polygons. 
By Proposition \ref{lem:Hom basis}, the set $\mathcal{C}(W,U)$ in \eqref{eq:C_WU} 
gives a $K$-basis of the space $\Hom(P_U,P_W)$. Thus, we have 
\begin{eqnarray*}
    (P_U,P_W) 
    = \#\mathcal{C}(W,U)  
    = \sum_{h\in W} \m(s(h))\occ(s(h),U) 
    = \sum_{v\in H/\langle\sigma\rangle} \m(v) \occ(v,U)\occ(v,W). 
\end{eqnarray*}
On the other hand, $\Hom(P_U,P_U)$ has a $K$-basis given by $\{1_U\} \cup \mathcal{C}(U,U)$, where $1_U$ denotes a constant path at $U$ in $Q_{\Gamma}$. 
Since $C_{h}^{\m(s(h))} = C_{f}^{\m(s(f))}$ holds for any $h,f\in U$ by (BC1), we obtain 
\begin{eqnarray*}
    (P_U,P_U) 
    &=& 1 + \#\mathcal{C}(U,U) - (\#U-1)  \\
    &=& 2- \#U + \sum_{h\in U} \m(s(h))\occ(s(h),U) \\ 
    &=& 2-\#U + \sum_{v\in H/\langle\sigma\rangle} \m(v) \occ(v,U)^2 
\end{eqnarray*}
as desired. 
\end{proof}

Next, let $\chi(U)$ be the multiplicity of $P_U$ as direct summands of $T_V^{0}$, that is,  
\begin{equation*}
    T_V^{0} =  \bigoplus_{e\in \HH_2(V)\sqcup \HH_3(V)}P_{[\p(e)]}  = \bigoplus_{U\in H/\psi} P_U^{\chi(U)}.  
\end{equation*}
Notice that, $\chi(U)\neq 0$ implies that $U$ is an edge by the condition (E).  

For each vertex $v$ of $\Gamma$, let 
\begin{equation*}
    \occ'(v,V) := \#\{e\in \HH_2(V)\sqcup \HH_3(V)\mid s(\overline{\p(e)}) = v\}. 
\end{equation*}
We remark that $\occ'(v,V)$ is precisely the number $\occ(v',V)=\#\{e\in V \mid s'(e) = v'\}$ considered in $\Gamma'$ for the polygon $V$ and the vertex $v'$ corresponding to $v$ (see Table \ref{table:flip}). 

\begin{lemma}\label{lem:occ}
For each vertex $v$ of $\Gamma$ such that $s^{-1}(v)\not\subset V$, 
we have 
    \begin{equation}\label{occ-occ}
        \sum_{U\in H/\psi} \chi(U) \occ(v,U) = \occ(v,V) + \occ'(v,V). 
    \end{equation}
\end{lemma}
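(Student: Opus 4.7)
The plan is to expand both sides by their definitions and reduce the identity to a combinatorial bookkeeping statement about the decomposition $V = \HH_1(V)\sqcup \HH_2(V)\sqcup \HH_3(V)$.

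First, I would rewrite the left-hand side using the defining formula
\[
T_V^{0} = \bigoplus_{e\in \HH_2(V)\sqcup \HH_3(V)} P_{[\p(e)]} = \bigoplus_{U\in H/\psi} P_U^{\chi(U)}.
\]
Reindexing the sum over summands of $T_V^0$ gives
\[
\sum_{U\in H/\psi} \chi(U)\,\occ(v,U) \;=\; \sum_{e\in \HH_2(V)\sqcup \HH_3(V)} \occ\bigl(v,\,[\p(e)]\bigr).
\]
Here the condition (E) is crucial: for every $e\in \HH_2(V)\sqcup \HH_3(V)$ we have $\p(e)\notin V$, and (E) forces the polygon $[\p(e)]$ to be the edge $\{\p(e),\overline{\p(e)}\}$. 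Therefore
\[
\occ\bigl(v,\,[\p(e)]\bigr) \;=\; \delta_{s(\p(e)),\,v} + \delta_{s(\overline{\p(e)}),\,v},
\]
splitting the right-hand sum into two pieces.

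Next, I would handle each piece separately. The sum $\sum_{e} \delta_{s(\overline{\p(e)}),v}$ is exactly $\occ'(v,V)$ by its definition. For the other piece, observe that $\p(e) = \sigma^{-c}(e)$ lies in the same $\sigma$-orbit as $e$, hence $s(\p(e)) = s(e)$, so
\[
\sum_{e\in \HH_2(V)\sqcup \HH_3(V)} \delta_{s(\p(e)),\,v} \;=\; \#\{e\in \HH_2(V)\sqcup \HH_3(V) \mid s(e)=v\}.
\]
Now the hypothesis $s^{-1}(v)\not\subset V$ enters: by definition $\HH_1(V) = \{e\in V\mid s^{-1}(s(e))\subset V\}$, so any $e\in V$ with $s(e)=v$ satisfies $s^{-1}(s(e)) = s^{-1}(v)\not\subset V$ and therefore $e\notin \HH_1(V)$. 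Combined with $V = \HH_1(V)\sqcup \HH_2(V)\sqcup \HH_3(V)$, this yields
\[
\{e\in \HH_2(V)\sqcup \HH_3(V) \mid s(e)=v\} \;=\; \{e\in V\mid s(e)=v\},
\]
whose cardinality is $\occ(v,V)$. Adding the two pieces gives the desired identity.

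There is no real obstacle here; everything follows by straightforward unpacking of definitions. The only places where care is needed are (i) invoking condition (E) to guarantee that each $[\p(e)]$ is an edge so that $\occ(v,[\p(e)])$ splits cleanly into two indicator terms, and (ii) using the hypothesis $s^{-1}(v)\not\subset V$ to rule out any contribution from $\HH_1(V)$ when converting a count over $\HH_2(V)\sqcup \HH_3(V)$ into $\occ(v,V)$.
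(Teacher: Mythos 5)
Your proof is correct and follows essentially the same route as the paper's: reindex $\sum_U \chi(U)\occ(v,U)$ over the summands $P_{[\p(e)]}$ of $T_V^0$, split each edge $[\p(e)]=\{\p(e),\overline{\p(e)}\}$ into its two angles using condition (E), identify the $\overline{\p(e)}$-contribution with $\occ'(v,V)$, and use $s(\p(e))=s(e)$ together with $s^{-1}(v)\not\subset V$ to identify the $\p(e)$-contribution with $\occ(v,V)$. You are somewhat more explicit than the paper about exactly where the hypothesis $s^{-1}(v)\not\subset V$ enters (ruling out $\HH_1(V)$ in the final count), which is a welcome clarification, but the underlying argument is identical.
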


\begin{proof}
For $h\in H$, we have 
\begin{equation*}
    \chi([h]) = \#\{e\in \HH_2(V)\sqcup \HH_3(V) \mid \p(e)=h\} + \#\{e\in \HH_2(V)\sqcup \HH_3(V)\mid \overline{\p(e)} = h\}. 
\end{equation*}
Using this, we have 
\begin{eqnarray*}
    \sum_{U\in H/\psi} \chi(U)\occ(v,U) 
    &=& 
    \sum_{\substack{h\in H \\ s(h)=v}} \chi([h]) \\ 
    &=& 
    \sum_{\substack{h\in H \\ s(h)=v}} 
    \#\{e\in\HH_2(V)\sqcup \HH_3(V)\mid \p(e)=h\} + \\ 
    && 
    \sum_{\substack{h\in H \\ s(h)=v}} 
    \#\{e\in \HH_2(V)\sqcup \HH_3(V)\mid \overline{\p(e)}=h\} \\
    &=& \occ(v,V) + \occ'(v,V)
\end{eqnarray*}
as desired. 
\end{proof}

Now, we are ready to prove Proposition \ref{prop:dim_equals}.

\begin{proof}[Proof of Proposition \ref{prop:dim_equals}]
We prove (1)-(4) in the statement. 

(1) Applying Lemma \ref{lem:dim proj}, we have 
\begin{equation}\label{dim:(1)-0}
    (P_V',P_V') = 2-\# V + \sum_{v\in H/\langle\sigma\rangle} \m(v)\occ'(v,V)^2.  
\end{equation}
On the other hand, since $T_V$ is a two-term pretilting complex, we have 
\begin{eqnarray}
    (T_V, T_V) = (T_V^0,T_V^0) - 2(P_V,T_V^0) + (P_V,P_V) 
\end{eqnarray}
by (\ref{dim formula}). 
We compute each summand. 
By the condition (E), every direct summand $P_U$ of $T_V^0$ is given by the edge $U$. 
Using Lemma \ref{lem:dim proj}, we have the following equality. 
Firstly, we have 
\begin{eqnarray}\nonumber
    (T_V^0,T_V^0) 
    &=& 
    \sum_{U,W\in H/\psi} \chi(U)\chi(W)(P_U,P_W) \\ \nonumber
    &=& 
    \sum_{U,W\in H/\psi} \chi(U)\chi(W)
    \left( 
    \sum_{v\in H/\langle\sigma\rangle} \m(v)\occ(v,U)\occ(v,W) 
    \right) \\ \nonumber
    &=& 
    \sum_{v\in H/\langle\sigma\rangle} \m(v)
    \left( 
    \sum_{U,W\in H/\psi} \chi(U)\chi(W) \occ(v,U)\occ(v,W) 
    \right) \\ \nonumber
    &=& 
    \sum_{v\in H/\langle\sigma\rangle} \m(v)
    \left( 
    \sum_{U\in H/\psi} \chi(U)\occ(v,U) 
    \right)^2 \\ \label{dim:(1)-1}
    &\overset{\eqref{occ-occ}}{=}& 
    \sum_{v\in H/\langle\sigma\rangle} \m(v)
    \left( 
    \occ(v,V) + \occ'(v,V) 
    \right)^2. 
\end{eqnarray}
Secondly, we have 
\begin{eqnarray}\nonumber
    (P_V, T_V^0) &=& \sum_{U\in H/\psi}\chi(U)(P_V,P_U) \\ 
    \nonumber
    &=& 
    \sum_{U\in H/\psi} \chi(U)
    \left( 
    \sum_{v\in H/\langle\sigma\rangle} \m(v)\occ(v,V) \occ(v,U) 
    \right) \\ \nonumber
    &=& 
    \sum_{v\in H/\langle\sigma\rangle} \m(v)\occ(v,V)
    \left( 
    \sum_{U\in H/\psi} \chi(U)\occ(v,U)  
    \right) \\ \nonumber
    &\overset{\eqref{occ-occ}}{=}& 
    \sum_{v\in H/\langle\sigma\rangle} \m(v)\occ(v,V)
    \left( 
    \occ(v,V) + \occ'(v,V)
    \right) \\ 
    &=& \label{dim:(1)-2} 
    \sum_{v\in H/\langle\sigma\rangle} \m(v)
    \left( 
    \occ(v,V)^2 + \occ(v,V) \occ'(v,V)
    \right). 
\end{eqnarray}
Finally, we have 
\begin{eqnarray} \label{dim:(1)-3}
    (P_V,P_V) 
    =   
    2- \#V + \sum_{v\in H/\langle\sigma\rangle} \m(v)\occ(v,V)^2.
\end{eqnarray}
Combining \eqref{dim:(1)-0}-\eqref{dim:(1)-3}, we get the desired equality (1). 

\noindent
(2) By the definition of flip, we have 
\begin{equation}\label{dim:(2)-0}
    (P_V', Y) = \sum_{v\in H/\langle\sigma\rangle} 
    \m(v) 
    \left( 
    \val(v) - \occ(v,V)
    \right) \occ'(v,V). 
\end{equation}
On the other hand, we have 
\begin{equation} 
    (T_V, X) = (T_V^0,X) - (P_V,X) 
\end{equation}
by (\ref{dim formula}). 
We have 
\begin{eqnarray}\nonumber
    (T_V^0,X) &=& \sum_{U\in H/\psi} \chi(U) (P_U,X) \\ \nonumber
    &=& 
    \sum_{U\in H/\psi} \chi(U) 
    \left(
    \sum_{v\in H/\langle\sigma\rangle} \m(v) \occ(v,U) 
    \left(
    \val(v) - \occ(v,V)  
    \right)
    \right)\\ \nonumber
    &=&
    \sum_{v\in H/\langle\sigma\rangle} \m(v) (\val(v)-\occ(v,V)) 
    \left(
    \sum_{U\in H/\psi} \chi(U)\occ(v,U) 
    \right)\\ \label{dim:(2)-2}
    &=& 
    \sum_{v\in H/\langle\sigma\rangle} \m(v) (\val(v)-\occ(v,V)) 
    \left(
    \occ(v,V) + \occ'(v,V) 
    \right) 
\end{eqnarray}
and  
\begin{equation}\label{dim:(2)-3}
    (P_V,X) = \sum_{v\in \in H/\langle\sigma\rangle} \m(v)(\val(v)-\occ(v,V))\occ(v,V).  
\end{equation}
By \eqref{dim:(2)-0}-\eqref{dim:(2)-3}, we obtain the desired equality (2) in the statement.

(3) Recall from Proposition \ref{prop:BCA is SMA} that Brauer configuration algebras $A_{\Gamma}$ and $A_{\Gamma'}$ are symmetric algebras. 
By \eqref{Serre duality}, we have 
$(P_V',Y) = (Y,P_V')$ and $(T_V,X) = (X,T_V)$. 
Then, (3) follows from (2).

(4) It is obvious from the definition of flip. In fact, Brauer configurations $\Gamma$ and $\Gamma'$ have the same structure except at $V$. 

We finish a proof of Proposition \ref{prop:dim_equals}.
\end{proof}

We end this paper with comments. As we show in Theorem \ref{thm:main theorem}, the condition (E) is a sufficient condition that a left mutation of a Brauer configuration algebra is again a Brauer configuration algebra, while this is not a necessary condition in general. In fact, we have the next example.

\begin{example}   
Let $A_{\Gamma}$ be a Brauer configuration algebra whose Brauer configuration is multiplicity-free and given by 
\begin{equation*}
\begin{tikzpicture}
    \coordinate (0) at(0,0);
    \coordinate (u) at(0,1);
    \coordinate (d) at(0,-1);
    
    \draw[fill = white!20!lightgray] (u)arc(90:270:0.5)(0)arc(90:270:0.5)(d)arc(270:90:1)(u);
    \draw[fill = white!20!lightgray] (u)arc(90:-90:0.5)(0)arc(90:-90:0.5)(d)arc(-90:90:1)(u);
    \draw[thick] (u)arc(90:270:0.5)(0)arc(90:270:0.5)(d)arc(270:90:1)(u);
    \draw[thick] (u)arc(90:-90:0.5)(0)arc(90:-90:0.5)(d)arc(-90:90:1)(u);
    
    \node at (0) {$\bullet$};
    %\node at (0) [above]{$u_2$};
    \node at (u) {$\bullet$};
    %\node at (u) [above]{$u_1$};
    \node at (d) {$\bullet$};
    %\node at (d) [above]{$u_3$};
    \node at (-1.5,0) {$U$};
    \node at (1.5,0) {$V$};
\end{tikzpicture}  
\end{equation*}
That is, both $U$ and $V$ are $3$-gons. 
In this case, $A_{\Gamma}$ is isomorphic to the trivial extension of the path algebra of the $3$-Kronecker quiver $1\Rrightarrow 2$ with a minimal co-generator. By a direct calculation, we have $\End(\mu_{P_{X}}^-(A_{\Gamma}))\cong A_{\Gamma}$ as $K$-algebras for $X\in \{U,V\}$. 
In particular, it is again a Brauer configuration algebra, while $X$ does not satisfy the condition (E). 
\end{example}

On the other hand, one can use iterated flip to compute derived equivalent Brauer configuration algebras, as like (double-)star theorem for Brauer graph algebras \cite[Theorem 1.4]{Aihara15}. 
The next observation was given in \cite[Proposition 7.3]{Duffield18}. 

\begin{example}
    We consider the class of Brauer configuration algebras whose Brauer configurations have exactly one $3$-gon and no $m$-gons with $m>3$. 
    Applying iterated flip, we can deduce that 
    they are derived equivalent to one of Brauer configuration algebras whose Brauer configurations are of the form 
    \begin{equation*}
        \begin{tikzpicture}
            \coordinate (a) at (30:0.8);
            \coordinate (b) at (30+120:0.8);
            \coordinate (c) at (30+240:0.8);
            \node at (a) {$\bullet$}; 
            \node at (b) {$\bullet$}; 
            \node at (c) {$\bullet$}; 
            \draw[thick, fill = white!20!lightgray] (a)--(b)--(c)--cycle; 
            \draw[thick] (a)--(b)--(c)--cycle;
            
            \coordinate (as) at ($(a)+ (40:0.8)$);
            \node at ($(a)+ (0:0.8)$) {\rotatebox{90}{$\cdots$}}; 
            \coordinate (at)at ($(a)+ (-40:0.8)$);
            \node at (as) {$\bullet$};
            \node at (at) {$\bullet$};
            \node at ($(a) + (100:0.3)$) {\small $v_1$};
            \node at (as) [right]{\small $v_2$};
            \node at (at) [right]{\small $v_a$};
            \draw[thick] (a)--(as) (a)--(at); 
            \coordinate (bs) at ($(b) + (40:-0.8)$);
            \node at ($(b)+ (0:-0.8)$) {\rotatebox{90}{$\cdots$}}; 
            \coordinate (bt)at ($(b) + (-40:-0.8)$);
            \node at (bs) {$\bullet$};
            \node at (bt) {$\bullet$};
            \node at ($(b) + (80:0.3)$) {\small $u_1$};
            \node at (bs) [left]{\small $u_2$};
            \node at (bt) [left]{\small $u_b$};
            \draw[thick] (b)--(bs) (b)--(bt); 
            \coordinate (cs) at ($(c)+ (40+90:-0.8)$);
            \node at ($(c)+ (0+90:-0.8)$) {\rotatebox{0}{$\cdots$}}; 
            \coordinate (ct)at ($(c)+ (-40+90:-0.8)$);
            \node at (cs) {$\bullet$};
            \node at (ct) {$\bullet$};
            \node at ($(c) + (0:0.4)$) {\small $w_1$};
            \node at (cs) [below]{\small $w_2$};
            \node at (ct) [below]{\small $w_c$};
            \draw[thick] (c)--(cs) (c)--(ct); 
            
        \end{tikzpicture}
    \end{equation*}
\end{example}

\section*{Acknowledgements} 
T.A is supported by JSPS Grant-in-Aid for Transformative Research Areas (A) 22H05105. 
Y.Z is supported by the NSFC (Grant No. 12201211) and the China Scholarship Council (Grant No. 202109710002). The authors would like to thank Professor Osamu Iyama for his careful guidance and encouraging discussions which leads to this collaboration. Y.Z thanks the support and excellent working conditions during her visit to Professor Osamu Iyama at the University of Tokyo from November 2022 to November 2023.

\bibliographystyle{abbrv} 
%\bibliography{BCAmutation.bib}

\end{document}